\numberwithin{equation}{section}
\numberwithin{figure}{section}
\theoremstyle{definition}
\newtheorem{thm}{Theorem}[section]
\newtheorem{prp}[thm]{Proposition}
\newtheorem{lem}[thm]{Lemma}
\newtheorem{cor}[thm]{Corollary}
\newtheorem{dfn}[thm]{Definition}
\newtheorem{rmk}[thm]{Remark}
\newtheorem*{thm*}{Theorem}
\newtheorem*{prp*}{Proposition}
\newtheorem*{lem*}{Lemma}
\newtheorem*{dfn*}{Definition}
\newtheorem*{fct*}{Fact}
\newtheorem*{rmk*}{Remark}
\newcommand{\ol}{\overline}
\newcommand{\wh}{\widehat}
\newcommand{\wt}{\widetilde}
\newcommand{\inj}{\hookrightarrow}
\newcommand{\longto}{\longrightarrow}
\newcommand{\simto}{\xrightarrow{\sim}}
\newcommand{\longsimto}{\xrightarrow{\, \sim \, }}
\newcommand{\longinj}{\lhook\joinrel\longrightarrow}
\newcommand{\longsurj}{\relbar\joinrel\twoheadrightarrow}
\newcommand{\defiff}{\stackrel{\text{def}}{\iff}}
\newcommand{\wotimes}{\mathbin{\wh{\otimes}}}
\newcommand{\id}{\mathrm{id}}
\newcommand{\nil}{\text{nil}}
\newcommand{\tpar}{\text{partitions}}
\newcommand{\bbA}{\mathbb{A}}
\newcommand{\bbC}{\mathbb{C}}
\newcommand{\bbF}{\mathbb{F}}
\newcommand{\bbN}{\mathbb{N}}
\newcommand{\bbZ}{\mathbb{Z}}
\newcommand{\calF}{\mathcal{F}}
\newcommand{\calG}{\mathcal{G}}
\newcommand{\calH}{\mathcal{H}}
\newcommand{\catA}{\mathfrak{A}}
\newcommand{\catP}{\mathfrak{P}}
\newcommand{\Hcl}{\mathrm{H}_{\text{cl}}}
\newcommand{\Dcl}{\mathrm{DH}_{\text{cl}}}
\newcommand{\wDcl}{\widehat{\mathrm{DH}}_{\text{cl}}}
\newcommand{\Res}{\mathop{\mathrm{Res}}}
\DeclareMathOperator{\F}{F}
\DeclareMathOperator{\R}{R}
\DeclareMathOperator{\Dh}{DH}
\DeclareMathOperator{\GL}{GL}
\DeclareMathOperator{\Gr}{Gr}
\DeclareMathOperator{\Ho}{Ho}
\DeclareMathOperator{\Ob}{Ob}
\DeclareMathOperator{\Aut}{Aut}
\DeclareMathOperator{\End}{End}
\DeclareMathOperator{\Ext}{Ext}
\DeclareMathOperator{\Hom}{Hom}
\DeclareMathOperator{\Iso}{Iso}
\DeclareMathOperator{\Ker}{Ker}
\DeclareMathOperator{\Rep}{\mathfrak{Rep}}
\DeclareMathOperator{\catmod}{\mathfrak{mod}}
\newcommand{\abs}[1]{\left| #1 \right|}
\newcommand{\EF}[2]{\chi\left(#1, #2 \right)}
\newcommand{\sEF}[2]{q^{\chi\left(#1, #2 \right)/2}}
\newcommand{\pair}[2]{\left< #1, #2 \right>}
\newcommand{\bnm}[2]{\genfrac{[}{]}{0pt}{1}{#1}{#2}}
\begin{document}

\title{A study of symmetric functions via derived Hall algebra}

\author{Ryosuke Shimoji, Shintarou Yanagida}
\address{Graduate School of Mathematics, Nagoya University, 
Furocho, Chikusaku, Nagoya, Japan, 464-8602.}
\email{m17017f@math.nagoya-u.ac.jp, 
yanagida@math.nagoya-u.ac.jp}

%\subjclass[2010]{05A15}
%\date{November 19--, 2018}
\date{December 15, 2018}

\begin{abstract}
We use derived Hall algebra 
of the category of nilpotent representations 
of Jordan quiver to reconstruct the theory of symmetric functions,
focusing on Hall-Littlewood symmetric functions and 
various operators acting on them. 
\end{abstract}

\maketitle
%\tableofcontents

%%%%%%%%%%%%%%%%%%%%%%%%%%%%%%%%%%%%%%%%%%%%%%%%%%%%%%%%%%%%%%%%%%%%%%
%%%%%%%%%%%%%%%%%%%%%%%%%%%%%%%%%%%%%%%%%%%%%%%%%%%%%%%%%%%%%%%%%%%%%%
\setcounter{section}{-1}
\section{Introduction}

In this note we use derived Hall algebra 
of the category $\Rep^{\nil}_{\bbF_q} Q$ 
of nilpotent representations of Jordan quiver $Q$ to
reconstruct the theory of symmetric functions,
focusing on Hall-Littlewood symmetric functions and 
various operators acting on them. 

As is well known, the ring $\Lambda$ of symmetric functions
has a structure of Hopf algebra,
and it coincides with the classical Hall algebra $\Hcl$, i.e., 
the Ringel-Hall algebra of the category $\Rep^{\nil}_{\bbF_q} Q$.
In particular, the Hall-Littlewood symmetric function 
$P_\lambda(x;q^{-1}) \in \Lambda$ of parameter $q^{-1}$ 
corresponds to the isomorphism class $[I_\lambda]$ 
of the nilpotent representation $I_\lambda$ 
constructed from the Jordan matrix 
associated to the partition $\lambda$.
See Macdonald's book \cite[Chap.I\!I, I\!I\!I]{M} and 
Schiffmann's lecture note \cite[\S2]{S} for detailed account.

Thus, in principle, one may 
\emph{reconstruct the theory of symmetric functions
only using the knowledge of the classical Hall algebra} $\Hcl$,
without any knowledge of symmetric functions nor 
the structure of $\Lambda$.
One of our motivation is pursue this strategy.

Recall that in the theory of symmetric functions
we have some key ingredients to construct 
important bases of $\Lambda$. 
\begin{itemize}[nosep]
\item
\emph{Power-sum symmetric functions} 
$p_n(x)=\sum_i x_i^n$.

\item
\emph{Inner products} $\pair{\cdot}{\cdot}$ on $\Lambda$.

\item
\emph{Kernel functions} associated to inner products 
and operators on $\Lambda$.
\end{itemize}
Some explanation on the second item is in order.
In \cite[Chap.V\!I]{M} Macdonald introduced 
his two-parameter symmetric functions $P_\lambda(x;q,t) \in \Lambda$.
His discussion stars with the inner product 
$\pair{p_m(x)}{p_n(x)}_{q,t}=\delta_{m,n} n (1-q^n)/(1-t^n)$,
and in the end, $P_\lambda(x;q,t)$'s 
give an orthogonal basis of $\Lambda$.
The Hall-Littlewood symmetric function $P_\lambda(x;t)$
and the Schur symmetric functions $s_\lambda(x)$ 
can be obtained from $P_\lambda(x;q,t)$ by degeneration of parameters,
and they are orthogonal bases with respect to the inner product
$\pair{p_m(x)}{p_n(x)}_{t}=\delta_{m,n} n/(1-t^n)$
and $\pair{p_m(x)}{p_n(x)}=\delta_{m,n} n$
respectively.

Viewing these key ingredients, we infer that 
what should be discussed first is 
the realization of power-sum symmetric functions purely 
in terms of the classical Hall algebra $\Hcl$.
Such a realization is already known 
(for example see \cite[Chap.I\!I \S7 Example 2]{M}),
but has not been much stressed as far as we know.

\begin{dfn*}[Definition \ref{dfn:p_n}]
We define $p_n \in \Hcl$ by 
\[
 p_n := \sum_{\abs{\lambda}=n} 
 (q;q)_{\ell(\lambda)-1} \cdot [I_\lambda]. 
\]
\end{dfn*}

Here the summation is over the partitions 
$\lambda=(\lambda_1,\lambda_2,\ldots)$
with $\abs{\lambda}:=\sum_i \lambda_i$ equal to $n$.
$\ell(\lambda)$ means the length of the partition $\lambda$,
and $(q;q)_l := \prod_{i=1}^l(1-q^i)$.

The logic line of our argument is as follows:
We introduce $p_n$ by this formula,
and only use the structure of $\Hcl$ to deduce

\begin{thm*}[{Theorem \ref{thm:prim}}]
$p_n$ is a primitive element of $\Hcl$, i.e., 
$\Delta(p_n)=p_n \otimes 1 + 1 \otimes p_n$ 
in terms of the Green coproduct $\Delta$.
\end{thm*}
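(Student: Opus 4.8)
The plan is to compute $\Delta(p_n)$ directly from Green's coproduct and read off coefficients. Writing $a_\lambda:=\abs{\Aut(I_\lambda)}$ and letting $g^\lambda_{\mu\nu}$ denote the Hall numbers of $\Rep^{\nil}_{\bbF_q}Q$, one has
\begin{equation*}
 \Delta([I_\lambda])=\sum_{\mu,\nu}\frac{a_\mu a_\nu}{a_\lambda}\,g^\lambda_{\mu\nu}\,[I_\mu]\otimes[I_\nu],
\end{equation*}
so that the coefficient of $[I_\mu]\otimes[I_\nu]$ in $\Delta(p_n)$ is
\begin{equation*}
 c_{\mu\nu}:=\sum_{\abs{\lambda}=n}(q;q)_{\ell(\lambda)-1}\,\frac{a_\mu a_\nu}{a_\lambda}\,g^\lambda_{\mu\nu}.
\end{equation*}
When $\nu=\emptyset$ we have $g^\lambda_{\mu\emptyset}=\delta_{\lambda\mu}$ and $a_\emptyset=1$, giving $c_{\mu\emptyset}=(q;q)_{\ell(\mu)-1}$ for $\abs{\mu}=n$, and symmetrically for $\mu=\emptyset$; these boundary terms assemble precisely into $p_n\otimes1+1\otimes p_n$. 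Hence the whole statement reduces to proving $c_{\mu\nu}=0$ for every pair of \emph{nonempty} partitions $\mu,\nu$ with $\abs{\mu}+\abs{\nu}=n$.

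First I would rewrite $c_{\mu\nu}$ via the Riedtmann--Peng formula $\frac{a_\mu a_\nu}{a_\lambda}g^\lambda_{\mu\nu}=\abs{\Ext^1(I_\mu,I_\nu)_{I_\lambda}}/\abs{\Hom(I_\mu,I_\nu)}$, where $\Ext^1(I_\mu,I_\nu)_{I_\lambda}$ denotes the extension classes whose middle term is isomorphic to $I_\lambda$. Because the Euler form of $\Rep^{\nil}_{\bbF_q}Q$ vanishes identically, one has $\dim\Ext^1(I_\mu,I_\nu)=\dim\Hom(I_\mu,I_\nu)=\sum_{i,j}\min(\mu_i,\nu_j)$ (this also makes any Euler-form twist in the coproduct trivial). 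Thus the denominator is constant in $\lambda$, and the \emph{unweighted} sum already satisfies $\sum_\lambda\frac{a_\mu a_\nu}{a_\lambda}g^\lambda_{\mu\nu}=\abs{\Ext^1(I_\mu,I_\nu)}/\abs{\Hom(I_\mu,I_\nu)}=1$. The theorem is therefore equivalent to the weighted vanishing
\begin{equation*}
 \sum_{\lambda}(q;q)_{\ell(\lambda)-1}\,\abs{\Ext^1(I_\mu,I_\nu)_{I_\lambda}}=0 \qquad(\mu,\nu\neq\emptyset).
\end{equation*}

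The key geometric input is a formula for $\ell(\lambda)$ in terms of the class $\xi\in\Ext^1(I_\mu,I_\nu)$ realizing $I_\lambda$ as the middle term. Let $S=I_{(1)}$ be the simple object; applying $\Hom(S,-)$ to $0\to I_\nu\to I_\lambda\to I_\mu\to0$ produces a connecting map $\delta_\xi\colon\Hom(S,I_\mu)\to\Ext^1(S,I_\nu)$ equal to the Yoneda product $f\mapsto\xi\circ f$, hence \emph{linear} in $\xi$. Using $\dim\Hom(S,I_\kappa)=\ell(\kappa)$ and $\dim\Ext^1(S,I_\nu)=\ell(\nu)$, the long exact sequence gives $\ell(\lambda)=\ell(\mu)+\ell(\nu)-\mathrm{rank}\,\delta_\xi$. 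I would then prove that $\xi\mapsto\delta_\xi$ is a \emph{surjective} linear map into $\Hom\bigl(\Hom(S,I_\mu),\Ext^1(S,I_\nu)\bigr)$; all its fibres then have the same cardinality $q^{\dim\Ext^1(I_\mu,I_\nu)-\ell(\mu)\ell(\nu)}$, this common factor cancels, and the weighted sum collapses to
\begin{equation*}
 \sum_{r\ge0}M_r\,(q;q)_{\ell(\mu)+\ell(\nu)-1-r},\qquad M_r=\bnm{\ell(\mu)}{r}\prod_{i=0}^{r-1}\bigl(q^{\ell(\nu)}-q^{i}\bigr),
\end{equation*}
where $M_r$ is the number of rank-$r$ linear maps between $\bbF_q$-spaces of dimensions $\ell(\mu)$ and $\ell(\nu)$.

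The remaining and main obstacle is the purely combinatorial identity
\begin{equation*}
 \sum_{r\ge0}\bnm{a}{r}\Bigl(\prod_{i=0}^{r-1}(q^{b}-q^{i})\Bigr)(q;q)_{a+b-1-r}=0\qquad(a,b\ge1).
\end{equation*}
Using $\prod_{i=0}^{r-1}(q^b-q^i)=(-1)^rq^{\binom{r}{2}}(q;q)_b/(q;q)_{b-r}$ rewrites the left-hand side as $(q;q)_a(q;q)_b\sum_r(-1)^rq^{\binom{r}{2}}(q;q)_{a+b-1-r}/\bigl((q;q)_r(q;q)_{a-r}(q;q)_{b-r}\bigr)$, a terminating balanced $q$-hypergeometric sum that I expect to follow from the finite $q$-binomial theorem or $q$-Chu--Vandermonde after the correct reindexing. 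As sanity checks the identity reduces to $(1-q)+(q-1)=0$ when $(a,b)=(1,1)$ and is confirmed for $(a,b)=(2,2)$, matching the explicit extension counts for $\Ext^1(S,S)$ and $\Ext^1(I_{(2)},I_{(2)})$. I anticipate the two genuinely delicate points to be the surjectivity of the Yoneda map $\xi\mapsto\delta_\xi$ (which legitimizes the fibrewise cancellation) and pinning down the exact classical summation formula that closes the $q$-identity.
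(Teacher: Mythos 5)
Your argument is correct, but it takes a genuinely different route from the paper's. The paper never touches the coproduct coefficients directly: it proves that $E(z)=\sum_n e_n(-z)^n$ is grouplike (Theorem \ref{thm:Hall-Jordan}(3)), establishes the Newton-type identity $E(z)=\exp(-\sum_n p_n z^n/n)$ (Lemma \ref{lem:p-e}) by a lengthy Pieri-rule computation, and then takes logarithms. You instead expand $\Delta(p_n)$ in the basis $[I_\mu]\otimes[I_\nu]$, convert the coefficient into a sum over $\Ext$-classes via Riedtmann--Peng, and use the long exact sequence for $\Hom(S,-)$ to express $\ell(\lambda)$ of the middle term through the corank of the connecting map $\delta_\xi$. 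Both of the points you flag do close. For surjectivity of $\xi\mapsto\delta_\xi$: the map is block-diagonal with respect to $\Ext^1(I_\mu,I_\nu)=\bigoplus_{i,j}\Ext^1(I_{(\mu_i)},I_{(\nu_j)})$, and each block surjects onto the one-dimensional space $\Hom\bigl(\Hom(S,I_{(\mu_i)}),\Ext^1(S,I_{(\nu_j)})\bigr)$ because the extension $0\to I_{(\nu_j)}\to I_{(\mu_i+\nu_j)}\to I_{(\mu_i)}\to0$ pulls back along the socle inclusion $S\hookrightarrow I_{(\mu_i)}$ to the nonsplit extension $0\to I_{(\nu_j)}\to I_{(\nu_j+1)}\to S\to0$. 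For the $q$-identity: writing $f(a,b)$ for your sum, Pascal's rule $\bnm{a}{r}_q=\bnm{a-1}{r}_q+q^{a-r}\bnm{a-1}{r-1}_q$ gives the recursion $f(a,b)=(1-q^{a-1})f(a-1,b)$, and $f(1,b)=(q;q)_b+(q^b-1)(q;q)_{b-1}=0$, so no appeal to $q$-Chu--Vandermonde is even needed. (Note that your identity is precisely the vanishing of $c_{(1^a),(1^b)}$, so the Yoneda argument is really a reduction of the general coefficient to this special case.) What your route buys is a cleaner isolation of the representation-theoretic content --- only the corank of $\delta_\xi$ matters --- and it bypasses the Pieri formula entirely. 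What it loses is that the paper's route simultaneously establishes the $p$--$e$ relation of Lemma \ref{lem:p-e}, which is reused later (Lemma \ref{lem:p:basis}, the converse half of Theorem \ref{thm:prim}, Proposition \ref{prp:Cauchy}), so your proof would not allow the paper to dispense with that lemma.
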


\begin{thm*}[{Theorem \ref{thm:p_n:pair}}]
In terms of Green's Hopf pairing $\pair{\cdot}{\cdot}$ on $\Hcl$, 
we have $\pair{p_m}{p_n} = \delta_{m,n} n/(q^n-1)$.
\end{thm*}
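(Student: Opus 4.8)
The plan is to exploit the orthogonality of the basis $\{[I_\lambda]\}$ with respect to Green's pairing. Recall that this pairing is diagonal on that basis, with $\pair{[I_\lambda]}{[I_\mu]}=\delta_{\lambda\mu}\,a_\lambda^{-1}$, where $a_\lambda:=\abs{\Aut I_\lambda}$. The factor $\delta_{m,n}$ is then immediate: the pairing respects the grading by $\abs{\lambda}$ (it vanishes unless the two modules are isomorphic, hence of equal length), so $\pair{p_m}{p_n}=0$ whenever $m\neq n$; this is also consistent with $p_n$ being primitive and homogeneous. For $m=n$, bilinearity together with diagonality reduces the claim to the purely combinatorial identity
\[
 \sum_{\abs{\lambda}=n}\frac{(q;q)_{\ell(\lambda)-1}^2}{a_\lambda}\;=\;\frac{n}{q^n-1}.
\]
As a sanity check this holds in low degree: for $n=1$ the single term gives $1/(q-1)$, while for $n=2$ the partitions $(2)$ and $(1^2)$ contribute $\tfrac{1}{q(q-1)}+\tfrac{1}{q(q+1)}=\tfrac{2}{q^2-1}$, matching the right-hand side. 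These checks also confirm that $\pair{[I_\lambda]}{[I_\lambda]}=a_\lambda^{-1}$ is exactly the normalization producing the stated value.

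The main work is to prove the displayed identity for general $n$. I would insert the explicit automorphism count $a_\lambda=q^{2n(\lambda)+\abs{\lambda}}\prod_{i\geq 1}(q^{-1};q^{-1})_{m_i(\lambda)}$, where $m_i(\lambda)$ is the number of parts of $\lambda$ equal to $i$ and $n(\lambda)=\sum_i(i-1)\lambda_i$, and argue by induction on $n$. Since $(q;q)_{\ell(\lambda)-1}$ depends only on the length $\ell(\lambda)$, it is natural to first organize the sum by fixing $\ell=\ell(\lambda)$, so that the identity becomes $\sum_{\ell}(q;q)_{\ell-1}^2\bigl(\sum_{\abs{\lambda}=n,\,\ell(\lambda)=\ell}a_\lambda^{-1}\bigr)=n/(q^n-1)$; the inner sums admit a recursion obtained by removing the largest part or a single box. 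I expect the principal obstacle to be the bookkeeping of how $a_\lambda$, $\ell(\lambda)$ and the power of $q$ in $a_\lambda$ transform under the chosen combinatorial operation, and checking that all powers of $q$ telescope to leave precisely $n/(q^n-1)$.

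An alternative, more structural route uses the primitivity from Theorem \ref{thm:prim}. Since the pairing is a Hopf pairing, $\pair{p_n}{xy}=\pair{\Delta p_n}{x\otimes y}=\pair{p_n}{x}\epsilon(y)+\epsilon(x)\pair{p_n}{y}$ with $\epsilon$ the counit, so $\pair{p_n}{-}$ annihilates every product of two elements of positive degree. Hence $\pair{p_n}{p_n}$ depends only on the image of $p_n$ in the one-dimensional space of indecomposables of degree $n$, i.e. $(\Hcl)$ in degree $n$ modulo such products. Choosing $[I_{(n)}]$ as a representative and writing $[I_{(n)}]\equiv c\,p_n$ there, one obtains $\pair{p_n}{p_n}=c^{-1}\pair{p_n}{[I_{(n)}]}=c^{-1}a_{(n)}^{-1}=c^{-1}/\bigl(q^{n-1}(q-1)\bigr)$, reducing everything to the single projection scalar $c$. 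This trades the full $q$-series identity for the determination of one coefficient; as the two are essentially equivalent in difficulty, I would carry out the inductive evaluation of the previous paragraph as the primary argument and use this primitivity computation as an independent check on the final constant.
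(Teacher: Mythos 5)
Your setup is correct and coincides with the paper's: the vanishing for $m\neq n$ follows from the $K_0$-grading, and bilinearity together with $\pair{[I_\lambda]}{[I_\mu]}=\delta_{\lambda\mu}a_\lambda^{-1}$ reduces everything to the partition identity $\sum_{\abs{\lambda}=n}(q;q)_{\ell(\lambda)-1}^2\,a_\lambda^{-1}=n/(q^n-1)$; your checks for $n=1,2$ are also right. But that identity \emph{is} the theorem, and your proposal stops exactly where the work begins: ``argue by induction on $n$ \dots\ I expect the principal obstacle to be the bookkeeping'' is a plan, not an argument. The concrete missing idea is how the length-dependent prefactor $(q;q)_{\ell(\lambda)-1}^2$ interacts with the inner sums over fixed length so as to produce the factor $n$ on the right-hand side. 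The paper resolves this by a two-variable deformation: with $t=q^{-1}$ it replaces one copy of $(t;t)_{\ell(\lambda)-1}$ by $(u^{-1};t)_{\ell(\lambda)}\,u^{\ell(\lambda)}/(u-1)$, proves the fixed-length identity
\[
 \sum_{\abs{\lambda}=n,\ \ell(\lambda)=l} t^{2n(\lambda)}
 \begin{bmatrix} l\\ m_1(\lambda),\ldots,m_n(\lambda)\end{bmatrix}_t
 = t^{l(l-1)}\begin{bmatrix} n-1\\ l-1\end{bmatrix}_t
\]
by induction via first-column removal ($\lambda\mapsto\lambda-(1^{l})$) and the $q$-Chu--Vandermonde identity, and then sums over $l$ using $\sum_{l}\bnm{n}{l}_t(u^{-1};t)_l\,u^l=u^n$; the $n$ finally appears as $\lim_{u\to1}(u^n-1)/(u-1)$. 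Your ``remove the largest part or a single box'' recursion is in the right spirit for the inner sum over fixed length, but without something playing the role of the auxiliary variable $u$ (or an equivalent device) the outer sum over lengths does not close, and nothing in your sketch produces the $n$.

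The alternative via primitivity is a correct structural observation --- $\pair{p_n}{\cdot}$ does kill products of positive-degree elements, hence factors through the one-dimensional quotient of degree-$n$ elements by decomposables --- but, as you concede, it only relocates the difficulty to the scalar $c$ with $[I_{(n)}]\equiv c\,p_n$ modulo decomposables, which you do not compute (one would need $c=\frac{1}{n}\sum_{i=0}^{n-1}q^{-i}$ for the stated value to come out, and determining this is essentially as hard as the original sum). As it stands, neither route is carried to completion, so the proof has a genuine gap at its central step.
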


%Let us proceed to explain the third key ingredient
As a consequence, we have two orthogonal bases 
$[I_\lambda]$ and $p_\lambda$ on $\Hcl$ 
with respect to $\pair{\cdot}{\cdot}$.
Then we recover the \emph{Cauchy-type kernel function},
which is the third key ingredient.

\begin{prp*}[{Proposition \ref{prp:Cauchy}}]
Set elements of $\Hcl$ as 
$P_\lambda := q^{n(\lambda)} [I_\lambda]$ and 
$Q_\lambda := P_\lambda/\pair{P_\lambda}{P_\lambda}$.
Then
\[
 \sum_{\lambda: \tpar} P_{\lambda} \otimes Q_{\lambda}
=\exp\Bigl(\sum_{n\ge 1} \frac{1}{n} (q^n-1) p_n \otimes p_n\Bigr).
\]
\end{prp*}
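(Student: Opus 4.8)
The plan is to read both sides as the canonical \emph{reproducing kernel} of Green's pairing $\pair{\cdot}{\cdot}$ and to use that this kernel is insensitive to the choice of dual bases. First I would isolate the elementary fact: for a nondegenerate graded pairing, if $\{u_i\}$ and $\{u^i\}$ are bases of a fixed graded component with $\pair{u_i}{u^j}=\delta_i^j$, then $\sum_i u_i\otimes u^i$ is independent of the choice of $\{u_i\}$. The verification is the one-line change-of-basis computation: writing a second basis as $v_a=\sum_i M_{ai}u_i$, its dual basis is $v^a=\sum_j \bigl((M^{T})^{-1}\bigr)_{aj}u^j$, whence $\sum_a v_a\otimes v^a=\sum_{i,j}\bigl(M^{T}(M^{T})^{-1}\bigr)_{ij}\,u_i\otimes u^j=\sum_i u_i\otimes u^i$. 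Because $P_\lambda\otimes Q_\lambda$ sits in bidegree $(\abs{\lambda},\abs{\lambda})$, only finitely many $\lambda$ contribute in each bidegree, so every sum below is finite degreewise and the identity lives safely in the completed tensor square; the lemma then applies verbatim.

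For the left-hand side, the orthogonality of $\{[I_\lambda]\}$, equivalently of $\{P_\lambda\}=\{q^{n(\lambda)}[I_\lambda]\}$, recorded just above gives $\pair{P_\lambda}{Q_\mu}=\pair{P_\lambda}{P_\mu}/\pair{P_\mu}{P_\mu}=\delta_{\lambda\mu}$. Hence $\{P_\lambda\}$ and $\{Q_\lambda\}$ are dual bases and $\sum_\lambda P_\lambda\otimes Q_\lambda$ is precisely the canonical kernel.

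For the right-hand side, I would expand the exponential. Since $\Hcl$ is commutative, the tensors $p_n\otimes p_n$ commute in $\Hcl\otimes\Hcl$, so
\[
 \exp\Bigl(\sum_{n\ge 1}\tfrac{q^n-1}{n}\,p_n\otimes p_n\Bigr)
 =\prod_{n\ge 1}\sum_{k\ge 0}\frac{1}{k!}\Bigl(\frac{q^n-1}{n}\Bigr)^{k}p_n^{k}\otimes p_n^{k}
 =\sum_\lambda c_\lambda\,p_\lambda\otimes p_\lambda ,
\]
where, for $\lambda$ having $m_n$ parts equal to $n$, we set $p_\lambda:=\prod_n p_n^{m_n}$ and $c_\lambda:=\prod_n (q^n-1)^{m_n}/(m_n!\,n^{m_n})$. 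It then suffices to identify $c_\lambda$ with $1/\pair{p_\lambda}{p_\lambda}$: granting this, $\{p_\lambda\}$ and $\{p_\lambda/\pair{p_\lambda}{p_\lambda}\}$ are dual bases by orthogonality of the $p_\lambda$, so the right-hand side is once more the canonical kernel, and the two sides coincide by the lemma.

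The substantive step — and the main obstacle — is therefore the evaluation of $\pair{p_\lambda}{p_\mu}$ from the two cited theorems, namely primitivity of $p_n$ and $\pair{p_m}{p_n}=\delta_{m,n}\,n/(q^n-1)$. I would pair products through the iterated Green coproduct, $\pair{p_{\lambda_1}\cdots p_{\lambda_\ell}}{b}=\pair{p_{\lambda_1}\otimes\cdots\otimes p_{\lambda_\ell}}{\Delta^{(\ell-1)}b}$, and use primitivity of each factor of $b=p_{\mu_1}\cdots p_{\mu_r}$. The vanishing $\pair{p_n}{1}=0$ (by grading) and $\pair{p_n}{p_{\mu_j}p_{\mu_{j'}}}=0$ (re-expand via the coproduct, $\pair{p_n}{p_{\mu_j}p_{\mu_{j'}}}=\pair{\Delta p_n}{p_{\mu_j}\otimes p_{\mu_{j'}}}$, and note both resulting terms carry a factor $\pair{1}{p}=0$) force each tensor slot to receive exactly one factor of $b$. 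This leaves only bijective distributions, yielding the permanent formula $\pair{p_\lambda}{p_\mu}=\delta_{\ell(\lambda),\ell(\mu)}\sum_{\sigma}\prod_i \pair{p_{\lambda_i}}{p_{\mu_{\sigma(i)}}}$; hence $\pair{p_\lambda}{p_\mu}=0$ unless $\lambda$ and $\mu$ have the same parts, and $\pair{p_\lambda}{p_\lambda}=\prod_n m_n!\,(n/(q^n-1))^{m_n}=1/c_\lambda$. The only points demanding care are this two-sided Hopf-pairing vanishing and the bookkeeping of the $\prod_n m_n!$ permutations fixing the multiset of parts; the rest is formal.
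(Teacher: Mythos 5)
Your proposal is correct and follows essentially the same route as the paper: both sides are recognized as the canonical reproducing kernel of the nondegenerate graded Hopf pairing, computed once in the dual pair $\{P_\lambda\},\{Q_\lambda\}$ and once in $\{p_\lambda\},\{p_\lambda/\pair{p_\lambda}{p_\lambda}\}$, with the exponential expanded into $\sum_\lambda c_\lambda\, p_\lambda\otimes p_\lambda$ and $c_\lambda$ identified with $1/\pair{p_\lambda}{p_\lambda}$. The only difference is that you spell out the evaluation $\pair{p_\lambda}{p_\mu}=\delta\cdot\prod_n m_n!\,(n/(q^n-1))^{m_n}$ via the iterated coproduct and primitivity, a step the paper's proof asserts without detail; your bookkeeping there is correct.
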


We will call $P_\lambda$ the Hall-Littlewood element
(Definition \ref{dfn:P_lambda}),
since under the isomorphism $\psi:\Hcl \simto \Lambda$ 
(Theorem \ref{thm:Hall=Lambda})
it corresponds to the Hall-Littlewood symmetric function
$P_\lambda(x;q^{-1}) \in \Lambda$.

So far all the discussion is on the level of $\Hcl$,
i.e., the Ringel-Hall algebra of 
the abelian category $\Rep^{\nil}_{\bbF_q} Q$.
Our second motivation is to realize operators 
acting nicely on $P_\lambda$ in the framework of Hall algebra.
For this purpose we need To\"{e}n's derived Hall algebra \cite{T}.

At present we know various nice operators 
in the theory of symmetric functions.
The main ingredient to construct such operators is 
\begin{itemize}
\item
the \emph{differential operator} $\partial_{p_n(x)}$ 
in terms of the power-sum function $p_n(x)$.
\end{itemize}
Thus it maybe nice to realize such differential operator 
in terms of Hall algebra.
The answer is hidden in the description of the derived Hall algebra 
for hereditary abelian category \cite[\S7]{T}.
Let us denote by $Z_\lambda^{[n]}$ 
the generator of the derived Hall algebra $\Dcl$
of the category $\Rep^{\nil}_{\bbF_q} Q$
(see \S\ref{subsec:Heis} for the detail).

\begin{thm*}[{Theorem \ref{thm:Heis}}]
For $n \in \bbZ_{>0}$ define $b_{\pm n} \in \Dcl$ by 
\[
 b_n := 
 \sum_{\abs{\lambda}=n}(q;q)_{\ell(\lambda)-1} Z_\lambda^{[0]},
 \quad
 b_{-n} := 
 \sum_{\abs{\lambda}=n}(q;q)_{\ell(\lambda)-1} Z_\lambda^{[1]},
\]
and set $b_0 := 1 \in \Dcl$.
Then for $m,n \in \bbZ$ we have 
\[
 b_m * b_n - b_n * b_m = \delta_{m+n,0}\frac{m}{q^m-1}.
\]
\end{thm*}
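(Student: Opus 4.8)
The plan is to reduce the relation to cases according to the signs of $m$ and $n$, exploiting the structure of the derived Hall algebra of a hereditary category. Recall from \cite[\S7]{T} that, since $\Rep^{\nil}_{\bbF_q} Q$ is hereditary, every object of its bounded derived category splits as a direct sum of shifts of objects of the heart; hence the generators $Z_\lambda^{[i]}$ may be identified with the classes of $I_\lambda[i]$, and To\"{e}n's product formula collapses to one involving only $\Hom$ and $\Ext^1$ groups. In particular the classes $\{Z_\lambda^{[0]}\}$ span a subalgebra isomorphic to $\Hcl$, and the classes $\{Z_\lambda^{[1]}\}$ span a second copy of $\Hcl$; since $\Hcl$ is commutative, each of these two subalgebras is commutative.

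First I would dispose of the easy cases. When $m$ and $n$ have the same sign the elements $b_m, b_n$ lie in a common commutative subalgebra (the degree-$0$ copy for $m,n>0$, the degree-$1$ copy for $m,n<0$), so $b_m*b_n-b_n*b_m=0$, matching $\delta_{m+n,0}=0$. The cases $m=0$ or $n=0$ are immediate from $b_0=1$. This leaves only the mixed commutators $[b_m,b_{-n}]$ with $m,n>0$, which carry all the content.

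For the mixed case the key point is that, for a hereditary category, the derived Hall algebra realizes a coupling of Drinfeld-double type between the degree-$0$ and degree-$1$ copies of $\Hcl$: the cross product $Z_\mu^{[1]}*Z_\lambda^{[0]}$ differs from $Z_\lambda^{[0]}*Z_\mu^{[1]}$ by terms governed by the $\Hom$- and $\Ext^1$-pairing between $I_\lambda$ and $I_\mu$, that is, by Green's Hopf pairing on $\Hcl$. I would expand $b_m*b_{-n}$ and $b_{-n}*b_m$ using To\"{e}n's formula, collect the weights $(q;q)_{\ell(\lambda)-1}(q;q)_{\ell(\mu)-1}$, and show that the difference reorganizes into the coproduct--pairing pattern. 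Because $b_{\pm n}$ are built from the primitive elements $p_n$ (Theorem \ref{thm:prim}), the coproduct $\Delta(p_n)=p_n\otimes 1+1\otimes p_n$ forces all higher cross terms to cancel, leaving a single scalar equal to Green's pairing $\pair{p_m}{p_n}$. By Theorem \ref{thm:p_n:pair} this equals $\delta_{m,n}\,n/(q^n-1)$, which is the asserted value of $[b_m,b_{-n}]$.

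The main obstacle is the explicit bookkeeping in the mixed product: To\"{e}n's structure constants for $D^b(\Rep^{\nil}_{\bbF_q} Q)$ involve ratios of orders of $\Hom$ and $\Ext^1$ groups together with Euler-form twists over the derived category, and one must verify that, after weighting by the coefficients $(q;q)_{\ell-1}$, this intricate sum telescopes to the single scalar $\pair{p_m}{p_n}$. Making precise the link between To\"{e}n's product and the Green coproduct and pairing---so that primitivity of $p_n$ can be invoked to kill the off-diagonal contributions---is the technical heart of the argument; once that dictionary is in place, the commutation relation follows from the two earlier theorems.
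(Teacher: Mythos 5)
Your skeleton is exactly the paper's: kill the cases $mn\ge 0$ by the commutativity of each copy of $\Hcl$ sitting in $\Dcl$ (To\"{e}n's degree-preserving relation plus $g^{\lambda}_{\mu,\nu}=g^{\lambda}_{\nu,\mu}$), reduce everything to the mixed commutator $[b_m,b_{-n}]$ via the cross relation $Z^{[0]}*Z^{[1]}=\sum\gamma\, Z^{[1]}*Z^{[0]}$, use primitivity of $p_n$ to cancel all non-scalar contributions, and identify the surviving scalar with $\pair{p_m}{p_n}=\delta_{m,n}n/(q^n-1)$ by Theorem \ref{thm:p_n:pair}. (Your worry about Euler-form twists is moot: $\EF{\cdot}{\cdot}\equiv 0$ on $K_0(\catA)$ for the Jordan quiver, so they all equal $1$.)

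However, the step you defer as ``the technical heart'' is the entire content of the paper's proof, and there is one concrete device you would need and do not name: the factorization of To\"{e}n's structure constant obtained by splitting the four-term exact sequence defining $\gamma^{\alpha,\beta}_{\mu,\nu}$ at the image of its middle arrow, namely
\[
 \gamma^{\alpha,\beta}_{\mu,\nu}
 =\sum_{\lambda}\frac{e^{\mu}_{\lambda,\alpha}\,e^{\nu}_{\beta,\lambda}}{a_\lambda\, a_\mu\, a_\nu}.
\]
This is what converts your appeal to primitivity into an actual cancellation: by the definition of the Green coproduct, $\Delta(p_l)=p_l\otimes 1+1\otimes p_l$ for $p_l=\sum_{\abs{\lambda}=l}c_\lambda[I_\lambda]$, $c_\lambda=(q;q)_{\ell(\lambda)-1}$, is equivalent to $\sum_{\lambda}c_\lambda e^{\lambda}_{\mu,\nu}a_\lambda^{-1}=0$ for $\mu,\nu\neq\emptyset$ together with $e^{\lambda}_{\mu,\emptyset}=e^{\lambda}_{\emptyset,\mu}=\delta_{\lambda,\mu}a_\lambda$. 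Substituting the factorization into $\sum_{\abs{\mu}=m}\sum_{\abs{\nu}=n}c_\mu c_\nu\gamma^{\alpha,\beta}_{\mu,\nu}$ and applying this identity once in the inner sum and once in the outer sum leaves exactly two surviving terms: $\alpha=\beta=\emptyset$ with $m=n$, contributing the scalar $\sum_{\abs{\lambda}=m}c_\lambda^2 a_\lambda^{-1}=\pair{p_m}{p_m}$, and the terms with the middle image $\emptyset$, which reassemble precisely into $b_{-n}*b_m$. Without this factorization, the claim that the difference ``reorganizes into the coproduct--pairing pattern'' is an assertion rather than an argument; with it, the mixed case is a short computation. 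So: right route, but the proposal stops just short of the one identity that makes it work.
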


This Heisenberg relation means that 
identifying $[I_\lambda] \in \Hcl$ with $Z_\lambda^{[1]} \in \Dcl$,
we realize $\partial_{p_n}$ as $b_n \in \Dcl$.

As an application of this Heisenberg subalgebra,
we study certain vertex operator 
whose zero mode has $P_\lambda \in \Hcl \subset \Dcl$
as eigenfunctions.
See \S \ref{subsec:eigen} for the detail.

\subsection*{Notations}

We denote by $\bbN:=\{0,1,2,\ldots\}$ the set of non-negative integers,
and by $\abs{S}$ the cardinality of a set $S$.
For a category $\catA$, the class of objects is denoted by $\Ob(A)$.
For $M,N \in \Ob(A)$,
the set of morphisms from $M$ to $N$ is denoted by $\Hom_{\catA}(M,N)$.

The tensor symbol $\otimes$ means the one over the complex number field $\bbC$
unless otherwise stated.

We follow Macdonald \cite[Chap.1 \S1]{M} as for notations of partitions.
A partition means a non-increasing sequence  
$\lambda=(\lambda_1,\ldots,\lambda_n)$
of non-negative integers of finite length.
We identify a partition and the one appended with $0$'s, so 
$\lambda=(\lambda_1,\ldots,\lambda_n)
=(\lambda_1,\ldots,\lambda_n,0,\ldots)$.
For a partition $\lambda$, we set 
\[
 \abs{\lambda}:=\sum_{i \ge 1}\lambda_i, \quad
 n(\lambda) := \sum_{i \ge 1}(i-1)\lambda_i,
\]
and denote by $\ell(\lambda)$
the largest index $i$ such that $\lambda_i>0$.
We sometimes call $\ell(\lambda)$ the length of $\lambda$.
For a partition $\lambda$ and $j \in \bbZ_{>0}$, 
we set $m_j(\lambda) := \{i \in \bbZ_{>0} \mid \lambda_i = j\}$,
and express $\lambda=(1^{m_1(\lambda)},2^{m_2(\lambda)},\ldots)$.
Finally, the transpose of a partition $\lambda$ is denoted by $\lambda'$.

Let $x$ and $q$ be indeterminates.
For $n \in \bbZ_{\le 0}$ we set $(x;q)_n:=1$,
and for $n \in \bbZ_{>0}$ we set
\[
 (x;q)_n := \prod_{i=1}^n (1- x q^{i-1}).
\]

%%%%%%%%%%%%%%%%%%%%%%%%%%%%%%%%%%%%%%%%%%%%%%%%%%%%%%%%%%%%%%%%%%%%%%
\subsection*{Acknowledgements.} 

This note is based on the master thesis of the first author.
The second author is supported by the Grant-in-aid for 
Scientific Research (No.\ 16K17570), JSPS.
This work is also supported by the JSPS Bilateral Program
``Topological Field Theory and String Theory -- 
  from topological recursion to quantum toroidal algebras".

\newpage

%%%%%%%%%%%%%%%%%%%%%%%%%%%%%%%%%%%%%%%%%%%%%%%%%%%%%%%%%%%%%%%%%%%%%%
%%%%%%%%%%%%%%%%%%%%%%%%%%%%%%%%%%%%%%%%%%%%%%%%%%%%%%%%%%%%%%%%%%%%%%
\section{Ringel-Hall algebra and To\"{e}n's derived Hall algebra}
\label{sec:general}

In this section we give a brief account on the Ringel-Hall algebra
and the derived Hall algebra, based on the original papers 
of Ringel \cite{R}, Green \cite{G}, Xiao \cite{X} 
and To\"{e}n \cite{T},
and also on the lecture note of Schiffmann \cite[\S1]{S}.

%%%%%%%%%%%%%%%%%%%%%%%%%%%%%%%%%%%%%%%%%%%%%%%%%%%%%%%%%%%%%%%%%%%%%%
\subsection{Ringel-Hall algebra}
\label{subsec:RH}

We call a category $\catA$ essentially small
if the isomorphism classes of objects form a set,
which is denoted by $\Iso(\catA)$.
For an object $A$ of $\catA$ 
its isomorphism class is denoted by $[A] \in \Iso(\catA)$.

For an essentially small abelian category $\catA$ 
we denote by $K_0(\catA)$ the Grothendieck group,
and for an object $A \in \Ob(\catA)$ the associated element of $K_0(\catA)$ 
is denoted by $\ol{A}$.
So, for a short exact sequence $0 \to A \to B \to C \to 0$ in $\catA$,
we have $\ol{A}-\ol{B}+\ol{C}=0$.

Let $k=\bbF_q$ be a finite field with $\abs{k}=q$.
Let $\catA$ be a category satisfying the following conditions.
\begin{enumerate}[nosep, label=(\roman*)]
\item
Essentially small, abelian and $k$-linear.

\item
Of finite global dimension.
%\item
%Hereditary, namely the global dimension is $0$ or $1$.
\end{enumerate}

We denote by $\F(\catA)$ 
the linear space of $\bbC$-valued functions on $\Iso(\catA)$
with finite supports. 
We have a basis $\{1_{[M]} \mid [M] \in \Iso(\catA)\}$ of $\F(\catA)$,
where $1_{[M]}$ means 
the characteristic function of $[M] \in \Iso(\catA)$.
The correspondence $1_{[M]} \mapsto [M]$ 
gives an identification
$\F(\catA) \simto \bigoplus_{[M] \in \Iso(\catA)}\bbC[M]$,
and we will always identify these two spaces.

For $A, B \in \Ob(\catA)$ we set 
$\EF{A}{B} := \sum_{i\ge0} (-1)^i \dim_k \Ext_{\catA}^i(A,B)$,
which depends only on $\ol{A},\ol{B} \in K_0(\catA)$.
The obtained map 
$\EF{\cdot}{\cdot}: K_0(\catA) \otimes_{\bbZ} K_0(\catA) \longto \bbZ$
is bilinear and called the Euler pairing.

For $M \in \Ob(\catA)$, we define $a_M \in \bbZ_{>0}$ by
\begin{align*}%\label{eq:a} 
a_M := \abs{\Aut(M)},
\end{align*}
and for $M,N,R \in \Ob(\catA)$, we set 
\begin{align*} 
%\nonumber
&\Ext^1_{\catA}(M,N)_R := 
 \{0 \to N \to R \to M  \to 0 \mid \text{exact in } \catA\},
\\
%\label{eq:bfP}
&e_{M,N}^R :=  \abs{\Ext^1_{\catA}(M,N)_R}, \quad 
 g_{M,N}^R :=  a_M^{-1} a_N^{-1} e_{M,N}^R.
\end{align*}
Note that $a_M,e_{M,N}^R,g_{M,N}^R$ depend only on 
the isomorphism classes $[M],[N],[R] \in \Iso(\catA)$.

\begin{fct*}[{Ringel \cite{R}}]
For $[M], [N] \in \Iso(\catA)$ we set 
\[
 [M] * [N] := \sEF{M}{N} \sum_{[R] \in \Iso(\catA)} g^R_{M,N} [R],
 %\frac{\langle M,N \rangle_m}{a_M a_N} \sum_R g^R_{M,N} [R]
\]
where we choose representatives $M,N,R \in \Ob(\catA)$ 
for the fixed isomorphism classes $[M],[N],[R] \in \Iso(\catA)$.
Denote by $[0] \in \Iso(\catA)$ the isomorphism class 
of the zero object $0$ in $\catA$.
Then $\bigl(\F(\catA),*,[0]\bigr)$ is an associative $\bbC$-algebra 
with $[0]$ the unit, which has a $K_0(\catA)$-grading.\
\end{fct*}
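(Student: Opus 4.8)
The plan is to verify in turn four things: that $*$ is well-defined (lands in $\F(\catA)$), that $[0]$ is a two-sided unit, associativity, and compatibility with the $K_0(\catA)$-grading, with associativity being the substantial point. First I would check that $[M]*[N]\in\F(\catA)$, i.e.\ that the sum over $[R]$ is finite. From the defining sequences $0\to N\to R\to M\to 0$ one has $\ol{R}=\ol{M}+\ol{N}$ in $K_0(\catA)$ whenever $g^R_{M,N}\neq0$, and every such $R$ is an extension of $M$ by $N$, of which there are only finitely many isomorphism classes since $\Ext^1_\catA(M,N)$ is a finite set under the standing hypotheses on $\catA$. The same observation yields the grading claim for free: placing $\bbC[M]$ in degree $\ol{M}\in K_0(\catA)$, the product $[M]*[N]$ is supported in degree $\ol{M}+\ol{N}$, so $*$ is $K_0(\catA)$-graded. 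For the unit, note $\sEF{0}{M}=q^0=1$, and a sequence $0\to M\to R\to 0\to 0$ forces $R\cong M$ with the map an isomorphism; counting that the number of such exact sequences for a fixed representative $R$ equals $\abs{\Aut(M)}=a_M$, I get $g^M_{0,M}=a_0^{-1}a_M^{-1}a_M=1$ (using $a_0=1$), whence $[0]*[M]=[M]$, and symmetrically $[M]*[0]=[M]$.

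The core is associativity. The key device I would introduce is the reinterpretation of the structure constant as a subobject count:
\[
 g^R_{M,N}=\abs{\{N'\subseteq R \mid N'\cong N,\ R/N'\cong M\}}.
\]
This follows by counting: an exact sequence $0\to N\to R\to M\to 0$ is the datum of a subobject $N'=\Img(N\to R)$ together with an isomorphism $N\simto N'$ and an isomorphism $R/N'\simto M$, giving $e^R_{M,N}=a_M a_N\cdot\abs{\{N'\subseteq R\mid N'\cong N,\ R/N'\cong M\}}$, and hence the displayed formula after dividing by $a_M a_N$.

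With this in hand I would compare both triple products against the number $G$ of filtrations $0\subseteq F_1\subseteq F_2\subseteq R$ of a fixed $R$ with $F_1\cong L$, $F_2/F_1\cong N$ and $R/F_2\cong M$. Expanding $([M]*[N])*[L]$ and grouping by the intermediate class $[B]=[R/F_1]$ shows that its untwisted coefficient of $[R]$ equals $\sum_{[B]}g^B_{M,N}g^R_{B,L}=G$, where one chooses $F_1$ first (the $g^R_{B,L}$ factor, $R/F_1\cong B$) and then $F_2\supseteq F_1$ inside $B\cong R/F_1$ (the $g^B_{M,N}$ factor). Expanding $[M]*([N]*[L])$ and grouping by $[C]=[F_2]$ shows that its untwisted coefficient equals $\sum_{[C]}g^C_{N,L}g^R_{M,C}=G$, choosing $F_2$ first and then $F_1\subseteq F_2$. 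Since both compute the same flag count, the untwisted structure constants agree.

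It remains to match the twisting factors, and here the bilinearity of the Euler pairing is exactly what is needed. On the left the factor is $\sEF{M}{N}\,\sEF{B}{L}$ with $\ol{B}=\ol{M}+\ol{N}$, so by bilinearity it equals $q^{(\chi(M,N)+\chi(M,L)+\chi(N,L))/2}$; on the right it is $\sEF{N}{L}\,\sEF{M}{C}$ with $\ol{C}=\ol{N}+\ol{L}$, equal to $q^{(\chi(N,L)+\chi(M,N)+\chi(M,L))/2}$, the same expression. Since this twist depends only on $\ol{M},\ol{N},\ol{L}\in K_0(\catA)$ and not on $R$, it factors out of the equal untwisted sums, and associativity follows. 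The main obstacle is this associativity step, and within it the bookkeeping that the grouped sums $\sum_{[B]}$ and $\sum_{[C]}$ each enumerate the length-two flags of $R$ without over- or under-counting; the subobject reinterpretation of $g^R_{M,N}$ is precisely what makes this transparent, reducing the whole matter to a single double-count of flags.
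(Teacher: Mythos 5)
Your proof is correct. The paper states this result as a Fact imported from Ringel's work and gives no proof of its own, but the two devices you rely on --- the reinterpretation $g^R_{M,N}=\abs{\calG^R_{M,N}}$ via the free $\Aut(M)\times\Aut(N)$-action on $\Ext^1_{\catA}(M,N)_R$, and the identification of iterated products with flag counts (the paper's Lemma \ref{lem:filter}) --- are exactly the lemmas the paper records immediately after the Fact, so your argument is the standard one the authors have in mind, including the correct observation that the Euler-form twists match by bilinearity since $\ol{B}=\ol{M}+\ol{N}$ and $\ol{C}=\ol{N}+\ol{L}$.
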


Let us recall the following another definition of $g_{M,N}^R$.

\begin{lem*}%\label{lem:calG}
For $M,N,R \in \Ob(\catA)$ we have
\[
 g^{R}_{M,N} = \abs{\calG^{R}_{M,N}}, \quad 
 \calG^{R}_{M,N} := 
 \{N' \subset R \mid N' \simeq N, \ R/N \simeq M\}.
\]
\end{lem*}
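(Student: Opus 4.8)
The plan is to realize $\calG^R_{M,N}$ as the orbit set of a natural free group action on the set $\Ext^1_{\catA}(M,N)_R$ of short exact sequences, and then to conclude by a single orbit-counting argument. Throughout I record a short exact sequence as the pair $(i,p)$ of its structure morphisms, so that an element of $\Ext^1_{\catA}(M,N)_R$ is a pair $(i,p)$ with $i\colon N \to R$ a monomorphism, $p\colon R \to M$ an epimorphism, and $\Img(i)=\Ker(p)$. First I would introduce the map
\[
 \pi\colon \Ext^1_{\catA}(M,N)_R \longto \calG^R_{M,N}, \qquad
 (i,p) \longmapsto i(N).
\]
The subobject $N':=i(N)\subset R$ satisfies $N'\simeq N$ via $i$ and $R/N'\simeq M$ via the isomorphism induced by $p$, so $\pi$ is well defined. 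It is surjective: given $N'\in\calG^R_{M,N}$, any choice of isomorphisms $N \simto N'$ and $R/N' \simto M$, composed respectively with the inclusion $N' \inj R$ and the canonical projection $R \to R/N'$, produces a short exact sequence lying over $N'$.

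Next I would let the group $\Aut(N)\times\Aut(M)$ act on $\Ext^1_{\catA}(M,N)_R$ by
\[
 (\alpha,\beta)\cdot(i,p) := (i\circ\alpha^{-1},\, \beta\circ p),
\]
which is well defined since $\Ker(\beta\circ p)=\Ker(p)=\Img(i)=\Img(i\circ\alpha^{-1})$, and is a genuine left action. The crucial claim is that the fibers of $\pi$ coincide exactly with the orbits of this action. That each orbit lies in a single fiber is immediate, as $(i\circ\alpha^{-1})(N)=i(N)$. For the converse, given two sequences $(i,p)$ and $(i',p')$ with the same image $N'=i(N)=i'(N)$, the monomorphisms $i,i'$ corestrict to isomorphisms $N\simto N'$, so $\alpha := (\text{corestriction of } i)^{-1}\circ(\text{corestriction of } i')\in\Aut(N)$ satisfies $i'=i\circ\alpha$; dually, $p,p'$ factor through isomorphisms $R/N'\simto M$, yielding $\beta\in\Aut(M)$ with $p'=\beta\circ p$. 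Hence $(i',p')$ and $(i,p)$ lie in one orbit, and fibers equal orbits.

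Finally I would check that the action is free: if $(i\circ\alpha^{-1},\beta\circ p)=(i,p)$, then $\alpha=\id$ because $i$ is monic and $\beta=\id$ because $p$ is epic. Consequently every orbit has cardinality $\abs{\Aut(N)\times\Aut(M)}=a_N a_M$, and since the orbits are indexed by $\calG^R_{M,N}$ we obtain $e^R_{M,N}=\abs{\Ext^1_{\catA}(M,N)_R}=a_M a_N\,\abs{\calG^R_{M,N}}$. Multiplying by $a_M^{-1}a_N^{-1}$ gives the asserted identity $g^R_{M,N}=\abs{\calG^R_{M,N}}$. (Here the defining condition of $\calG^R_{M,N}$ is read as $R/N'\simeq M$.)

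I expect the main obstacle to be the careful verification that the fibers of $\pi$ coincide with the orbits — specifically, checking that the corestriction of a monomorphism $i$ onto its image $N'$ and the factorization of an epimorphism $p$ through $R/N'$ really are isomorphisms in the abelian category $\catA$, so that the constructed $\alpha$ and $\beta$ are honest automorphisms. This step is exactly where the abelian structure (mono onto image, epi with the expected cokernel, and exactness $\Img(i)=\Ker(p)$) is used; the well-definedness of the action, freeness, and the orbit count are then routine.
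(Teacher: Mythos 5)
Your argument is correct and is precisely the one the paper intends: the paper's entire ``proof'' is the single remark that one should consider a free action of $\Aut(M)\times\Aut(N)$ on $\Ext^1_{\catA}(M,N)_R$, and your proposal fills in exactly that outline (the projection $(i,p)\mapsto i(N)$, fibers equal orbits, freeness from $i$ monic and $p$ epic, and the orbit count $e^R_{M,N}=a_M a_N\abs{\calG^R_{M,N}}$). No gaps; the verification that the corestriction of $i$ onto its image and the induced map $R/N'\to M$ are isomorphisms is indeed where the abelian structure enters, just as you note.
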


One can prove this statement by considering a free action 
of $\Aut(M) \times \Aut(N)$ on $\Ext^1_{\catA}(M,N)_R$.

Similarly the multi-component product has the following meaning.

\begin{lem}\label{lem:filter}
For $B_1,B_2,\ldots,B_r \in \Ob(\catA)$ we set 
\[
 \calF(A;B_1,B_2,\ldots,B_r) := 
 \{A=A_1 \supset A_2 \supset \cdots \supset A_r \supset A_{r+1} =0 
  \mid  A_i/A_{i+1} \simeq B_i \ (i=1,\ldots,r) \}.
\]
Then we have 
\[
 [B_1] * [B_2] * \cdots * [B_r] = 
 \sum_{[A] \in \Iso(\catA)} 
 \abs{\calF(A;B_1,\ldots,B_r)} \cdot [A].
\]
\end{lem}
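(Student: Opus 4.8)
The plan is to induct on $r$, using associativity of the product $*$ (Ringel's Fact above) together with the interpretation $g^R_{M,N}=\abs{\calG^R_{M,N}}$ from the preceding lemma. The base case $r=1$ is immediate: $\calF(A;B_1)$ has a single element when $A\simeq B_1$ and is empty otherwise, so the right-hand side equals $[B_1]$. The case $r=2$ is the observation that a two-step filtration $A=A_1\supset A_2\supset A_3=0$ of type $(B_1,B_2)$ is exactly the datum of a subobject $A_2\subset A$ with $A_2\simeq B_2$ and $A/A_2\simeq B_1$; hence $\abs{\calF(A;B_1,B_2)}=\abs{\calG^A_{B_1,B_2}}=g^A_{B_1,B_2}$, the Hall number occurring in the product $[B_1]*[B_2]$.

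For the inductive step I would peel off the first factor on the left, writing $[B_1]*[B_2]*\cdots*[B_r]=[B_1]*\bigl([B_2]*\cdots*[B_r]\bigr)$, and feed the $(r-1)$-fold product into the induction hypothesis. The combinatorial core is then a bijection: a filtration of $A$ of type $(B_1,\ldots,B_r)$ corresponds to the choice of a subobject $N'\subset A$ with $A/N'\simeq B_1$ together with a filtration of $N'$ of type $(B_2,\ldots,B_r)$. Grouping the subobjects $N'$ according to their isomorphism class $C$ and invoking $\abs{\calG^A_{B_1,C}}=g^A_{B_1,C}$ yields the identity
\[
 \sum_{[C]\in\Iso(\catA)} \abs{\calF(C;B_2,\ldots,B_r)}\, g^A_{B_1,C}
 = \abs{\calF(A;B_1,\ldots,B_r)} ,
\]
which is precisely the coefficient of $[A]$ produced by the inductive expansion.

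The step requiring care is the bookkeeping of the Euler twist $\sEF{B_1}{C}=q^{\EF{B_1}{C}/2}$ attached to each application of $*$. Here I would exploit the $K_0(\catA)$-grading: the factor $g^A_{B_1,C}$ can be nonzero only when $\ol{C}=\ol{B_2}+\cdots+\ol{B_r}$ in $K_0(\catA)$, so by bilinearity of the Euler pairing the exponent $\EF{B_1}{C}$ is independent of the particular $C$ that contributes. Consequently the twisting scalar is constant across the whole sum and factors out cleanly, leaving the combinatorial count untouched; tracking it through the induction multiplies the final coefficients only by a scalar depending on the fixed classes $\ol{B_1},\ldots,\ol{B_r}$.

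I expect this control of the Euler twist through the grading to be the only genuine subtlety, since the filtration bijection itself is a routine refinement of short exact sequences. Associativity guarantees that the answer is insensitive to the order in which the factors are multiplied, which provides a useful consistency check on the recursion and also allows the alternative of peeling off $B_r$ from the right should that prove more convenient in the bookkeeping.
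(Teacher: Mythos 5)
Your induction is the intended argument: the paper gives no proof of Lemma \ref{lem:filter} (it simply says ``similarly'' after treating the $r=2$ case via the free action of $\Aut(M)\times\Aut(N)$ on $\Ext^1_{\catA}(M,N)_R$), and the recursion you describe --- peel off $[B_1]$, use $g^A_{B_1,C}=\abs{\calG^A_{B_1,C}}$, and match a filtration of $A$ of type $(B_1,\ldots,B_r)$ with the pair consisting of a subobject $N'\subset A$ with $A/N'\simeq B_1$ and a filtration of $N'$ of type $(B_2,\ldots,B_r)$ --- is exactly right, as is your use of the $K_0(\catA)$-grading to see that the exponent $\EF{B_1}{C}$ is the same for every $C$ that contributes. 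The one loose end is that you leave the resulting scalar floating: carried through the induction, your computation yields
\[
 [B_1]*\cdots*[B_r]
 =\Bigl(\prod_{i<j}\sEF{B_i}{B_j}\Bigr)
  \sum_{[A]\in\Iso(\catA)}\abs{\calF(A;B_1,\ldots,B_r)}\cdot[A],
\]
whereas the lemma as printed has no prefactor. The two agree only when $\EF{B_i}{B_j}=0$ for all $i<j$; in this paper that is harmless because the lemma is applied solely to $\catA=\Rep^{\nil}_{\bbF_q}Q$, for which \S\ref{subsec:Jordan} records that the Euler form vanishes identically on $K_0(\catA)$. To actually prove the equality as claimed you should either include the explicit prefactor $\prod_{i<j}\sEF{B_i}{B_j}$ in the statement or invoke the vanishing of the Euler form; saying the coefficients are ``multiplied only by a scalar'' stops just short of establishing the identity asserted.
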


Next we recall the coproduct on $\F(\catA)$.
As mentioned in Notations, we simply denote $\otimes := \otimes_\bbC$.

\begin{fct*}[{Green \cite{G}}]
Assume that the category $\catA$ satisfies the conditions (i), (ii) and
\begin{enumerate}[nosep, label=(\roman*)]
\setcounter{enumi}{2}
\item
Each object has only finite numbers of subobjects.
\end{enumerate}
Then we have a $K_0(\catA)$-graded coassociative $\bbC$-coalgebra 
$\bigl(\F(\catA),\Delta,\epsilon\bigr)$,
where the coproduct 
$\Delta: \F(\catA)\to \F(\catA) \otimes \F(\catA)$
and the counit $\epsilon: \R(\catA) \to \bbC$ are given by
\begin{align*}
 \Delta([R]) := 
 \sum_{[M],[N]} \sEF{M}{N} \frac{e_{M,N}^R}{a_R} [M] \otimes [N],
 \quad
 \epsilon([M]) := \delta_{M,0}.
\end{align*}
\end{fct*}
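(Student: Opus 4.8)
The plan is to verify the three coalgebra axioms in turn --- finiteness (well-definedness of $\Delta$), the counit identities, and coassociativity --- translating each into a count of subobjects or flags in $\catA$, in parallel with Ringel's Fact for the product.

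First I would confirm that $\Delta([R])$ is a \emph{finite} sum, so that it indeed lies in $\F(\catA)\otimes\F(\catA)$. Every pair $([M],[N])$ with $e^R_{M,N}\neq 0$ comes from a short exact sequence $0\to N\to R\to M\to 0$, hence from a subobject $N'\subseteq R$ with $N'\simeq N$ and $R/N'\simeq M$; by hypothesis (iii) there are only finitely many such $N'$, so only finitely many terms survive. The $K_0(\catA)$-grading is then automatic: in each such sequence $\ol{R}=\ol{M}+\ol{N}$, so $\Delta$ maps the degree-$\ol{R}$ summand into $\bigoplus_{\alpha+\beta=\ol{R}} \F(\catA)_\alpha\otimes\F(\catA)_\beta$.

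For the counit, applying $\epsilon\otimes\id$ to $\Delta([R])$ retains only the terms with $[M]=[0]$. There $\EF{0}{N}=0$ since $\ol{0}=0$, and $e^R_{0,N}$ counts exact sequences $0\to N\to R\to 0$, i.e.\ isomorphisms $N\simto R$, giving $a_R$ when $[N]=[R]$ and $0$ otherwise. Hence $(\epsilon\otimes\id)\Delta([R])=(a_R/a_R)[R]=[R]$, and the same computation with $\id\otimes\epsilon$ gives the other counit identity.

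The main work, and the step I expect to be the real obstacle, is coassociativity. I would first rewrite the structure constant via the earlier Lemma as $e^R_{M,N}=a_M a_N\abs{\calG^R_{M,N}}$. Expanding $(\Delta\otimes\id)\Delta([R])$ and $(\id\otimes\Delta)\Delta([R])$ and extracting the coefficient of a fixed $[A]\otimes[B]\otimes[C]$, the combinatorial part of each side collapses, after cancelling the $a_M$ (resp.\ $a_N$) in the denominator, to $\tfrac{a_A a_B a_C}{a_R}$ times $\sum_{[M]}\abs{\calG^R_{M,C}}\,\abs{\calG^M_{A,B}}$ on the left and $\sum_{[N]}\abs{\calG^R_{A,N}}\,\abs{\calG^N_{B,C}}$ on the right. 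The key point is that each iterated sum counts the same object, the two-step filtrations $0\subset F_2\subset F_1\subset R$ with successive subquotients $C,B,A$, namely $\abs{\calF(R;A,B,C)}$ in the notation of Lemma~\ref{lem:filter}: for the left sum via the bijection between a subobject of $R/C'$ (for $C'\subseteq R$) and an intermediate $F_1$ with $C'\subseteq F_1\subseteq R$, and for the right sum via the analogous description of subobjects of $N'\subseteq R$. It remains only to match the scalar prefactors, a short computation with the Euler pairing: on the left $\ol{M}=\ol{A}+\ol{B}$ and on the right $\ol{N}=\ol{B}+\ol{C}$, so bilinearity gives $\EF{M}{C}+\EF{A}{B}=\EF{A}{N}+\EF{B}{C}=\EF{A}{B}+\EF{A}{C}+\EF{B}{C}$, whence both sides carry the common factor $q^{(\EF{A}{B}+\EF{A}{C}+\EF{B}{C})/2}$. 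Combining this with the equality of combinatorial factors gives $(\Delta\otimes\id)\Delta=(\id\otimes\Delta)\Delta$ and completes the verification that $(\F(\catA),\Delta,\epsilon)$ is a $K_0(\catA)$-graded coassociative coalgebra.
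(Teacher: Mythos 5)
Your verification is correct. Finiteness of the sum does follow from condition (iii) via the passage from exact sequences to subobjects $N'\subseteq R$; the counit computation correctly identifies $e^R_{0,N}=\delta_{[N],[R]}\,a_R$; and the coassociativity argument is sound: after substituting $e^R_{M,N}=a_M a_N\abs{\calG^R_{M,N}}$, the $a_M$ (resp.\ $a_N$) factors cancel, both iterated sums count $\abs{\calF(R;A,B,C)}$ by the correspondence theorem for subobjects of a quotient, and bilinearity of $\chi$ on $K_0(\catA)$ (using $\ol{M}=\ol{A}+\ol{B}$, $\ol{N}=\ol{B}+\ol{C}$) matches the powers of $q$.

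Your route differs from the paper's, which offers no direct verification at all: it cites the statement as a Fact of Green and disposes of coassociativity in one line, ``a direct consequence of the associativity of the product $*$.'' What is meant is that the structure constants of $\Delta$ are, up to the factors $a_M a_N/a_R$, the same $g^R_{M,N}$ as those of $*$ --- equivalently, $\Delta$ is adjoint to $*$ under the pairing $\pair{[M]}{[N]}=\delta_{[M],[N]}a_M^{-1}$ --- so coassociativity is inherited from associativity and nondegeneracy of the pairing. That argument is shorter but presupposes Ringel's associativity Fact; yours is self-contained and makes the filtration count $\abs{\calF(R;A,B,C)}$ of Lemma~\ref{lem:filter} explicit, which is in effect the common combinatorial core underlying both associativity of $*$ and coassociativity of $\Delta$. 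Either is acceptable; you might add one sentence observing the adjunction, since it explains why no genuinely new computation is needed beyond what Ringel's Fact already encodes.
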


The coassociativity is a direct consequence of 
the associativity of the product $*$.

If the condition (iii) is not satisfied,
then the coproduct $\Delta$ is an infinite summation 
so that  $\bigl(\F(\catA),\Delta,\epsilon\bigr)$
is not a genuine coalgebra.
However, one can consider it as a topological coalgebra.
See \cite[\S1.4]{S} for the detail.

For the data $(\F(\catA),*,\Delta)$ to be a bialgebra,
we need another condition on the category $\catA$.
This fact is revealed by Green \cite{G}.

\begin{fct*}[{Green \cite{G}}]
Assume that the category $\catA$ satisfies the conditions (i),  (iii) and
\begin{enumerate}[nosep, label=(\roman*)]
\setcounter{enumi}{3}
\item
hereditary, i.e.,
the global dimension is $0$ or $1$
(so the condition (ii) is also satisfied).
\end{enumerate}
Define the product $*$ on $\F(A) \otimes \F(A)$ by 
\begin{align}\label{eq:*:tensor:twist}
 (x_1 \otimes x_2) * (y_1 \otimes y_2) := 
 \EF{x_2}{y_1}\EF{y_1}{x_2} \cdot (x_1 * x_2) \otimes (y_1 * y_2).
\end{align}
Then $\Delta: (\F(\catA),*) \to (\F(\catA)\otimes \F(\catA),*)$
is an algebra homomorphism respecting the $K_0(\catA)$-gradings.

Moreover, the bilinear pairing $\pair{\cdot}{\cdot}$
on $\F(\catA)$ given by 
\[
 \pair{[M]}{[N]} := \delta_{[M],[N]} a_M^{-1}
 %\quad ([M],[N] \in \Iso(\catA))
\]
is a Hopf pairing with respect to $*$ and $\Delta$.
That is, we have 
$\pair{x*y}{z}=\sum \pair{x}{z_1}\pair{y}{z_2}$
for any $x,y,z \in \F(\catA)$,
where we used the Sweedler notation $\Delta(z)=\sum z_1 \otimes z_2$.
\end{fct*}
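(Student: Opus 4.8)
The plan is to treat the two assertions separately, dealing with the easy one first. The grading statement is immediate: if $g^R_{M,N}\neq 0$ or $e^R_{M,N}\neq 0$ then $\ol R=\ol M+\ol N$ in $K_0(\catA)$, so both Ringel's product and Green's coproduct are homogeneous, and the twisted product (\ref{eq:*:tensor:twist}) is $K_0(\catA)$-bigraded; hence $\Delta$ automatically respects the gradings. The substantive content is the algebra-homomorphism property, and it rests on a single homological identity.

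I would first dispose of the Hopf pairing. By bilinearity it suffices to verify $\pair{[M]*[N]}{[R]}=\pair{[M]\otimes[N]}{\Delta([R])}$ on basis elements, where the right-hand pairing on $\F(\catA)\otimes\F(\catA)$ is $\pair{x_1\otimes x_2}{y_1\otimes y_2}=\pair{x_1}{y_1}\pair{x_2}{y_2}$. Unwinding Ringel's formula and the orthogonality $\pair{[L]}{[R]}=\delta_{[L],[R]}a_R^{-1}$ gives
\[
 \pair{[M]*[N]}{[R]}=\sEF{M}{N}\,g^R_{M,N}\,a_R^{-1},
\]
while unwinding Green's coproduct and $\pair{[M]}{[X]}=\delta_{[M],[X]}a_M^{-1}$ gives
\[
 \pair{[M]\otimes[N]}{\Delta([R])}=\sEF{M}{N}\,\frac{e^R_{M,N}}{a_R}\,a_M^{-1}a_N^{-1}.
\]
These agree because $e^R_{M,N}=a_M a_N\,g^R_{M,N}$; no deep input is needed. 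I would note in passing that, since $\pair{\cdot}{\cdot}$ is symmetric, this also yields the dual identity $\pair{\Delta(w)}{u\otimes v}=\pair{w}{u*v}$.

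For the algebra-homomorphism property I would compare the coefficient of $[X]\otimes[Y]$ on the two sides of $\Delta([M]*[N])=\Delta([M])*\Delta([N])$, using the twisted product which on homogeneous tensors reads $(x_1\otimes x_2)*(y_1\otimes y_2)=\sEF{x_2}{y_1}\sEF{y_1}{x_2}\,(x_1*y_1)\otimes(x_2*y_2)$. Expanding the left side through Ringel's product and Green's coproduct produces the coefficient $\sEF{M}{N}\sEF{X}{Y}\,a_X a_Y\sum_{[R]}g^R_{M,N}g^R_{X,Y}/a_R$. Expanding the right side gives a fourfold sum over $[A],[B],[C],[D]$ of $g^M_{A,B}g^N_{C,D}g^X_{A,C}g^Y_{B,D}$ weighted by an explicit power of $\sqrt q$ and by $a_Aa_Ba_Ca_D/(a_Ma_N)$. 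Since the gradings force $\ol X=\ol A+\ol C$, $\ol Y=\ol B+\ol D$, $\ol M=\ol A+\ol B$ and $\ol N=\ol C+\ol D$, bilinearity of the Euler form shows the two $\sqrt q$-exponents differ by exactly $2\EF{A}{D}$, and after cancelling the common factors the required equality reduces to
\[
 a_X a_Y\sum_{[R]}\frac{g^R_{M,N}\,g^R_{X,Y}}{a_R}
 =\sum_{[A],[B],[C],[D]}
 \frac{\abs{\Ext^1_{\catA}(A,D)}}{\abs{\Hom_{\catA}(A,D)}}\,
 \frac{a_A a_B a_C a_D}{a_M a_N}\,
 g^M_{A,B}\,g^N_{C,D}\,g^X_{A,C}\,g^Y_{B,D},
\]
where I have used $q^{-\EF{A}{D}}=\abs{\Ext^1_{\catA}(A,D)}/\abs{\Hom_{\catA}(A,D)}$, valid because $\catA$ is hereditary.

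This last identity is \emph{Green's formula}, and proving it is the main obstacle; everything above is bookkeeping. I would establish it as a counting statement: the left side enumerates, weighted by $a_R^{-1}$, pairs consisting of an extension $0\to N\to R\to M\to 0$ and an extension $0\to Y\to R\to X\to 0$ sharing the object $R$, while the right side enumerates the same data organized by the subquotients $A,B,C,D$ appearing in the associated $3\times 3$ (nine-lemma) diagram. The delicate point is that the two enumerations do not match on the nose: fixing $A,B,C,D$, the number of ways to assemble them into a common $R$ is governed by $\abs{\Ext^1_{\catA}(A,D)}$, whereas the automorphisms one must quotient by contribute $\abs{\Hom_{\catA}(A,D)}$, and it is precisely the hereditary hypothesis that makes these the only homological contributions, so that their ratio is the clean factor $q^{-\EF{A}{D}}$. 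Carrying out this fibre-counting argument carefully (in the style of Riedtmann and Peng) is where essentially all the work lies. I would emphasise that this cannot be circumvented by appealing to the Hopf pairing: because $\pair{\cdot}{\cdot}$ is symmetric and non-degenerate and makes $*$ and $\Delta$ mutually adjoint, the bialgebra compatibility is self-dual under the adjunction and hence genuinely independent of the associativity of $*$ and the coassociativity of $\Delta$, so new combinatorial input is unavoidable.
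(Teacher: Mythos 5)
The paper states this result as a \emph{Fact} cited from Green's paper and supplies no proof of its own, so there is no in-paper argument to compare yours against line by line; I assess your proposal on its own terms.

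The parts you actually carry out are correct. The grading claim and the Hopf-pairing claim both reduce to the definitional identity $e^R_{M,N}=a_M a_N\, g^R_{M,N}$ together with the orthogonality of the basis of isomorphism classes, exactly as you say. Your coefficient bookkeeping for $\Delta([M]*[N])=\Delta([M])*\Delta([N])$ is also right: by bilinearity of the Euler form on $K_0(\catA)$ the two exponents of $\sqrt q$ differ by $2\EF{A}{D}$, and after cancellation the compatibility is equivalent to Green's formula, with the factor $q^{-\EF{A}{D}}=\abs{\Ext^1_{\catA}(A,D)}/\abs{\Hom_{\catA}(A,D)}$ correctly extracted using heredity. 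Your closing observation that the bialgebra axiom is self-adjoint under the (symmetric, non-degenerate) Hopf pairing, and so cannot be obtained from associativity or coassociativity by duality, is also valid and worth making.

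The genuine gap is that Green's formula itself is asserted rather than proved. The paragraph you devote to it is a description of the strategy --- organize pairs of short exact sequences with common middle term $R$ by the four subquotients $A,B,C,D$ of the associated crossed diagram, and account for the fibres by $\abs{\Ext^1_{\catA}(A,D)}$ against $\abs{\Hom_{\catA}(A,D)}$ --- but this counting is precisely where the entire difficulty of the theorem is concentrated. One must construct the maps between the two sets being enumerated, prove that the fibres are torsors over the groups you name, and use the hereditary hypothesis to rule out higher corrections; this occupies several pages in Green's paper and in Schiffmann's notes and cannot be dismissed as bookkeeping. As written, your proposal is a correct and cleanly organized \emph{reduction} of the Fact to Green's formula, not a proof of the Fact.
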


Although the Ringel-Hall algebra is not a bialgebra 
in the standard sense since we use 
the twisted product \eqref{eq:*:tensor:twist},
we will say that 
$(\F(\catA),*,[0],\Delta,\epsilon)$ is a bialgebra 
in the above sense.

By the work of Xiao \cite{X},
$\R(\catA)$ has a structure of Hopf algebra. 

\begin{fct*}[{Xiao \cite{X}}]
Assume the conditions (i), (iii) and (iv) for $\catA$. 
Define a linear map $S:\F(\catA) \to \F(\catA)$ by 
\[
 S([M]) := 
 a_M^{-1}\sum_{r \ge 1}(-1)^r\sum_{M_{\bullet}\in \calF(M;r)}
 \bigl(\prod^r_{i=1} \EF{M_i/M_{i+1}}{M_{i+1}} a_{M_i/M_{i+1}}\bigr)
 \cdot [M_1/M_2] * [M_2/M_3] * \cdots * [M_r],
\]
where $\calF(M;r)$ denotes the set of filtrations of proper length $r$:
\[
 \calF(M;r) :=
 \{ M_{\bullet}= 
   (M=M_1\supsetneq M_2 \supsetneq \cdots \supsetneq M_r \supsetneq 0)\}.
\]
Then $(\F(\catA),*,[0],\Delta,\epsilon,S)$ is a 
$K_0(\catA)$-graded Hopf algebra over $\bbC$.
\end{fct*}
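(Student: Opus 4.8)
The plan is to recognize $\bigl(\F(\catA),*,[0],\Delta,\epsilon\bigr)$ as a connected, conilpotent bialgebra and to identify $S$ with the $\star$-convolution inverse of the identity, which for such a bialgebra is automatically the antipode. First I would record the finiteness that makes everything converge: for a fixed $M$, a filtration $M_\bullet\in\calF(M;r)$ with all quotients nonzero has $r\le\ell_\catA(M)$, the length of $M$ in $\catA$ (finite since $\catA$ is abelian and satisfies (iii)). Hence $\calF(M;r)=\emptyset$ for $r>\ell_\catA(M)$, so the sum defining $S([M])$ is finite; equivalently, the reduced coproduct $\bar\Delta$, obtained from Green's $\Delta$ by deleting the two terms carrying a $[0]$-factor, is locally nilpotent, its $r$-fold iterate annihilating $[M]$ once $r\ge\ell_\catA(M)$.

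Granting this, I would pass to the convolution algebra $\bigl(\End_\bbC\F(\catA),\star\bigr)$ with $f\star g:={*}\circ(f\otimes g)\circ\Delta$ and unit $\eta\epsilon\colon x\mapsto\epsilon(x)[0]$. Writing $\id=\eta\epsilon+\phi$ with $\phi:=\id-\eta\epsilon$ supported on $\ker\epsilon=\bigoplus_{[M]\ne[0]}\bbC[M]$, local nilpotency makes $\sum_{r\ge0}(-1)^r\phi^{\star r}$ a finite sum on each $[M]$ and a two-sided $\star$-inverse of $\id$; note that the antipode axiom ${*}\circ(S\otimes\id)\circ\Delta=\eta\epsilon$ does not involve the tensor-square twist \eqref{eq:*:tensor:twist}, so conilpotency alone already forces existence and uniqueness of $S$. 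It then remains to match this series with Xiao's formula. Since $\phi^{\star r}={*}^{(r-1)}\circ\phi^{\otimes r}\circ\Delta^{(r-1)}$, evaluating $\phi^{\star r}([M])$ amounts to taking the iterated coproduct $\Delta^{(r-1)}([M])$, discarding every tensor term with a $[0]$-factor, and multiplying the surviving tensors back together by the Hall product. Iterating Green's $\Delta$ and using the subobject description $g^R_{M,N}=\abs{\calG^R_{M,N}}$ together with Lemma \ref{lem:filter} rewrites the all-positive part of $\Delta^{(r-1)}([M])$ as a sum over $M_\bullet\in\calF(M;r)$ of $[M_1/M_2]\otimes\cdots\otimes[M_r]$, and applying ${*}^{(r-1)}$ turns each tensor into $[M_1/M_2]*\cdots*[M_r]$, reproducing the shape of the asserted formula.

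The hard part will be the bookkeeping of the scalar prefactors. I would show by induction on $r$ that the coefficients $e^R_{M,N}/a_R$ from Green's coproduct, combined with the structure constants $g^R_{M,N}=a_M^{-1}a_N^{-1}e^R_{M,N}$ reintroduced by the reassembling product ${*}^{(r-1)}$, telescope to the automorphism weight $a_M^{-1}\prod_{i=1}^{r}a_{M_i/M_{i+1}}$ appearing in the statement. Simultaneously the half-integral Euler twists $\sEF{M}{N}=q^{\EF{M}{N}/2}$ contributed by $\Delta^{(r-1)}$ must be combined with those contributed by the reassembling Hall products so that all square roots cancel and leave precisely the integral factor $\prod_{i=1}^{r}\EF{M_i/M_{i+1}}{M_{i+1}}$; this twist reconciliation is the only delicate point, since Green's coproduct weights are symmetric in their $q^{\chi/2}$ while the statement carries the asymmetric weight $\chi(M_i/M_{i+1},M_{i+1})$, and keeping track of the order in which the quotients are multiplied is what resolves the asymmetry. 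As an alternative I could verify the antipode axiom directly as an inclusion--exclusion cancellation over the poset of filtrations of $M$, but that essentially reproves the convolution-inverse formula by hand, so I prefer the conceptual route and isolate the twist computation as the single lemma to be checked.
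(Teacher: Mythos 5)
The paper does not prove this statement at all: it is recorded as a \emph{Fact} and attributed to Xiao \cite{X}, so there is no internal argument to compare against. Your proposal is the standard Takeuchi-type argument and it does work here: condition (iii) bounds strictly decreasing chains of subobjects, so $\calF(M;r)=\emptyset$ for $r$ larger than the length of $M$, the reduced coproduct is locally conilpotent, and $\sum_{r\ge0}(-1)^r\phi^{\star r}$ is a two-sided convolution inverse of $\id$; as you say, the antipode axiom only uses associativity of $*$ and coassociativity of $\Delta$, not the twist \eqref{eq:*:tensor:twist}, so this inverse is the antipode. Iterating Green's coproduct gives
$\Delta^{(r-1)}([M])=\sum a_M^{-1}\bigl(\prod_{i<j}\sEF{B_i}{B_j}\bigr)\bigl(\prod_i a_{B_i}\bigr)\abs{\calF(M;B_1,\ldots,B_r)}\,[B_1]\otimes\cdots\otimes[B_r]$, and applying $\phi^{\otimes r}$ restricts to proper filtrations. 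The one point you leave open, the twist reconciliation, is easier than you fear: since Xiao's formula leaves the products $[M_1/M_2]*\cdots*[M_r]$ symbolic, the reassembling Hall products contribute nothing that needs cancelling, and the match is immediate from bilinearity of the Euler form together with $\ol{M_{i+1}}=\sum_{j>i}\ol{M_j/M_{j+1}}$ in $K_0(\catA)$, which gives
\[
\prod_{i=1}^{r}\sEF{M_i/M_{i+1}}{M_{i+1}}
=\prod_{i<j}\sEF{M_i/M_{i+1}}{M_j/M_{j+1}},
\]
exactly the weight produced by $\Delta^{(r-1)}$ (note the displayed formula's $\EF{\cdot}{\cdot}$ is evidently a typo for $\sEF{\cdot}{\cdot}$, as in \eqref{eq:*:tensor:twist}). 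Two small caveats: the displayed formula yields $S([0])=0$, whereas the antipode must send $[0]$ to $[0]$; your $r=0$ term $\eta\epsilon$ supplies this, so the formula should be read as applying to $[M]\neq[0]$. And your remark that Green's weights are ``symmetric'' is not accurate ($\sEF{M}{N}$ is not symmetric in $M,N$), but nothing in your argument depends on it.
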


Thus we have a $K_0(\catA)$-graded Hopf algebra with Hopf pairing
\[
 \R(\catA) := 
 \bigl(\F(\catA),*,[0],\Delta,\epsilon,S,\pair{\cdot}{\cdot}\bigr),
\]
which we call the \emph{Ringel-Hall algebra} of the category $\catA$.

%%%%%%%%%%%%%%%%%%%%%%%%%%%%%%%%%%%%%%%%%%%%%%%%%%%%%%%%%%%%%%%%%%%%%%
\subsection{dg Hall algebra}
\label{subsec:dg}

In this subsection we recall the description of
derived Hall algebra of hereditary abelian category
due to To\"{e}n \cite[\S7]{T}.

Let $\catA$ be a category satisfying the conditions 
(i) and (iv), i.e.,
essentially small hereditary abelian category 
linear over $\bbF_q$.
Thus we have 
the Ringel-Hall algebra $\R(\catA)=(\F(\catA),*,[0])$.
It is also equipped with Green's topological coproduct 
and Green's Hopf pairing, but we will not treat them.

Consider the dg category $\catP$ of perfect complexes 
consisting of objects in $\catA$.
By \cite[\S3]{T} we have a unital associative $\bbC$-algebra 
$\Dh(\catA)$ whose underlying linear space is 
spanned by the set of isomorphism classes of objects in $\catP$,
i.e., perfect complexes of $\catA$.

We denote by $\Ho(\catP)$ the associated homotopy category 
in terms of the model structure given in \cite{T07}.
By the assumption on $\catA$ we have an equivalence 
$D^b(\catA) \to H(\catP)$ of triangulated categories,
where the source means the bounded derived category of $\catA$.
Then we can apply the argument in \cite[\S7]{T}, 
and have the following description of $\Dh(\catA)$.

\begin{fct*}[{\cite[Proposition 7.1]{T}}]
The algebra $\Dh(\catA)$ is isomorphic to 
the unital associative algebra generated by 
$\{Z^{[n]}_x \mid x \in \Iso(\catA), n \in \bbZ\}$
and the following relations.
\begin{align*}
 Z_x^{[n]} * Z_y^{[n]} &= 
 \sum_{z \in \Iso(\catA)} g^{z}_{x,y} Z_x^{[n]},
\\
 Z_x^{[n]} * Z_y^{[n+1]} &= \sum_{k,c \in \Iso(\catA)} 
 q^{-\EF{c}{k}} \gamma^{k,c}_{x,y} Z_k^{[n+1]} * Z_c^{[n]},
\\
 Z_x^{[n]} * Z_y^{[m]} &= 
 q^{(-1)^{n-m}\EF{x}{y}} Z_y^{[m]} * Z_x^{[n]}
 \quad (n-m<-1).
\end{align*}
Here $g^z_{x,y}$ denotes the structure constants 
of the Ringel-Hall algebra $\R(\catA)$, and 
\[
 \gamma^{k,c}_{x,y} := 
 \frac{\abs{\{0 \to k \to y \to x \to c \to 0 
              \mid \text{exact in $\catA$}\}}}{a_x a_y}.
\]
\end{fct*}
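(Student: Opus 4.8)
The plan is to read off the presentation directly from To\"{e}n's multiplication formula for $\Dh(\catA)$, exploiting heredity at every step. Recall from \cite[\S3, \S7]{T} that $\Dh(\catA)$ has as a basis the isomorphism classes of objects of $D^b(\catA)$, with $[X]*[Y]=\sum_{[Z]}g^{Z}_{X,Y}[Z]$, where $g^{Z}_{X,Y}$ counts (up to the action of $\Aut(X)\times\Aut(Y)$) the distinguished triangles $Y\to Z\to X\to Y[1]$ and carries the correction factor $\prod_{i>0}\abs{\Hom_{D^b(\catA)}(X,Y[-i])}^{(-1)^i}$ from the higher graded Homs. Since $\catA$ is hereditary, every object of $D^b(\catA)$ splits as $\bigoplus_n x_n[n]$ with $x_n\in\catA$; thus $\Dh(\catA)$ is generated by the classes $Z^{[n]}_x:=[x[n]]$, and $\Hom_{D^b(\catA)}(x[n],y[m])=\Ext^{\,m-n}_{\catA}(x,y)$ vanishes unless $m-n\in\{0,1\}$. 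This vanishing is exactly what separates the three cases.

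For $m=n$ I would compute $Z^{[n]}_x*Z^{[n]}_y$: the connecting map lives in $\Ext^1_{\catA}(x,y)$, so the relevant triangles $y[n]\to z[n]\to x[n]\to y[n+1]$ are precisely the short exact sequences $0\to y\to z\to x\to 0$ in $\catA$, while the correction factor is trivial because $\Hom_{D^b(\catA)}(x[n],y[n-i])=\Ext^{-i}_{\catA}(x,y)=0$ for $i>0$. Hence the structure constant reduces to the classical $g^{z}_{x,y}$ of $\R(\catA)$, giving the first relation. For $n-m<-1$ both $\Ext^{\,m-n}_{\catA}(x,y)$ and $\Ext^{\,n-m}_{\catA}(y,x)$ vanish, so in either order the only triangle is split and the product is a scalar multiple of $[x[n]\oplus y[m]]$; the two correction factors differ only through the degree-$0$ and degree-$1$ contributions, which assemble into the Euler form and produce the commutation scalar $q^{(-1)^{n-m}\EF{x}{y}}$.

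The decisive case is $m=n+1$, and this is where I expect the real work to lie. The product $Z^{[n]}_x*Z^{[n+1]}_y$ has connecting map in $\Ext^2_{\catA}(x,y)=0$, so it is a single split class; the content of the relation is to rewrite it in the descending normal form $Z^{[n+1]}_k*Z^{[n]}_c$. The bridge is the reverse product, in which $\Hom_{D^b(\catA)}(y[n+1],x[n+1])=\Hom_{\catA}(y,x)$ now contributes: each $\phi\colon y\to x$ has, by heredity, a mapping cone splitting as $\Cok(\phi)[n]\oplus\Ker(\phi)[n+1]$, and counting those $\phi$ with $\Ker(\phi)\simeq k$, $\Cok(\phi)\simeq c$ is exactly the count of four-term exact sequences $0\to k\to y\to x\to c\to 0$ defining $\gamma^{k,c}_{x,y}$. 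The hard part is to convert this bijective bookkeeping into the precise identity: one must track the $\Aut$-normalizations, organize the split versus non-split contributions to $Z^{[n+1]}_k*Z^{[n]}_c$ into a triangular system over the basis $\{[c[n]\oplus k[n+1]]\}$, invert it, and extract from the Hom-corrections the exact power $q^{-\EF{c}{k}}$. This inversion is the main obstacle.

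Finally, to see that these relations \emph{present} the algebra rather than merely hold in it, I would straighten every monomial into descending shift-degree order using the second and third relations (and combine equal-degree factors via the first). By the decomposition $\bigoplus_n x_n[n]$ these ordered monomials are in bijection with the isomorphism classes of objects of $D^b(\catA)$, hence with the canonical basis of $\Dh(\catA)$; a leading-term comparison then shows the straightening map is an isomorphism, so there are no further relations.
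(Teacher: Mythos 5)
First, note that the paper does not actually prove this statement: it is imported as a Fact from To\"{e}n \cite[Proposition 7.1]{T}, the only justification offered being that the equivalence $D^b(\catA)\simeq\Ho(\catP)$ lets one ``apply the argument in \cite[\S7]{T}''. Your proposal is therefore a reconstruction of To\"{e}n's own argument, and in outline it follows his route faithfully: the splitting $\bigoplus_n x_n[n]$ of objects of the bounded derived category of a hereditary category, the vanishing of $\Hom_{D^b(\catA)}(x[n],y[m])$ outside $m-n\in\{0,1\}$, the reduction of the equal-degree product to the classical Hall product, the computation of the far-apart commutation scalar from the correction factors, and the identification of morphisms $\phi\colon y\to x$ with prescribed $\Ker(\phi)\simeq k$, $\Cok(\phi)\simeq c$ with the four-term exact sequences counted by $\gamma^{k,c}_{x,y}$. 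All of these structural ingredients are correctly placed.

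As a proof, however, the proposal has a genuine gap, and you flag it yourself: the entire substance of the statement is the second relation, and you stop exactly where it would have to be established. Saying that one ``must track the $\Aut$-normalizations, organize the split versus non-split contributions into a triangular system, invert it, and extract the exact power $q^{-\EF{c}{k}}$'' names the difficulty without resolving it; in particular the coefficient $q^{-\EF{c}{k}}\gamma^{k,c}_{x,y}$ --- precisely the input that makes the Heisenberg computation in Theorem \ref{thm:Heis} come out to $\delta_{m+n,0}\,m/(q^m-1)$ --- is never derived, and neither the counting of triangles with a given connecting morphism nor To\"{e}n's higher-$\Hom$ correction factor is actually evaluated in this case. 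The concluding presentation claim has the same character: asserting that a ``leading-term comparison'' shows the straightening map is an isomorphism requires exhibiting a filtration or partial order on the basis $\{[\bigoplus_n x_n[n]]\}$ with respect to which the second relation is unitriangular, and none is given. Since the paper itself only cites To\"{e}n, the fair verdict is that your sketch correctly locates the proof but does not supply it at the one point where supplying it matters.
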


%%%%%%%%%%%%%%%%%%%%%%%%%%%%%%%%%%%%%%%%%%%%%%%%%%%%%%%%%%%%%%%%%%%%%%
%%%%%%%%%%%%%%%%%%%%%%%%%%%%%%%%%%%%%%%%%%%%%%%%%%%%%%%%%%%%%%%%%%%%%%
\section{Classical Hall algebra}
\label{sec:Hcl}

In this subsection we recall basic properties of the Ringel-Hall algebra
of the category of nilpotent representations of 
the Jordan quiver over a finite field.
It coincides with the commutative algebra introduced by Steinitz and 
Hall in their study of the representation theory of symmetric group,
and is called the classical Hall algebra.
The main ingredient in this subsection is the structure theorem
(Theorem \ref{thm:Hall-Jordan}) of the classical Hall algebra.
Our presentation is based on \cite[\S2]{S} and \cite[Chap.\ I\!I]{M}.

%%%%%%%%%%%%%%%%%%%%%%%%%%%%%%%%%%%%%%%%%%%%%%%%%%%%%%%%%%%%%%%%%%%%%%
\subsection{Category of nilpotent representation of Jordan quiver}
\label{subsec:Jordan}

Let $Q=(Q_0,Q_1)$ be the Jordan quiver
consisting of one vertex $Q_0=\{\bullet\}$
and one edge arrow  $Q_1=\{a\}$.
In this and next sections we only consider the category
$\catA = \Rep_k^{\nil} Q$
of nilpotent representation of $Q$ over a field $k$.

An object of $\catA$ is a pair $(V,x)$
of finite dimensional $k$-linear space $V$ and 
an endomorphism  $x \in \End_k(V)$.
$\catA$ is equivalent to the category $\catmod^{\nil} k[t]$
of modules over 
the polynomial ring $k[t]$ which are finite dimensional over $k$
and where the action of $t$ is nilpotent.

The category $\catA$ over a finite field $k=\bbF_q$
satisfies all the conditions (i)--(iv) in the last subsection,
so that we have the Ringel-Hall algebra $\R(\catA)$.
We call it the \emph{classical Hall algebra} and denote it by $\Hcl$. 

Let us describe the structure of the classical Hall algebra.
For $n \in \bbZ_{>0}$ we denote by $J_n$
the Jordan matrix of dimension $n$ with $0$ diagonal entries:
\[
 J_n := 
 \begin{bmatrix}
  0 & 1 & \\
    & 0 & 1 & \\
    &   & \ddots & \ddots \\
    &   &   & 0  & 1 \\
    &   &   &    & 0
 \end{bmatrix}.
\]
For a partition $\lambda=(\lambda_1, \lambda_2, \ldots)$ 
we set an object $I_\lambda$ of $\catA$ by 
\[
 I_\lambda := (k^{\abs{\lambda}},J_\lambda), \quad
 J_\lambda := J_{\lambda_1} \oplus J_{\lambda_2} \oplus \cdots.
\]
Here we used $|\lambda| := \sum_{i\ge1}\lambda_i$.
We also consider $\emptyset=(0)$ as a partition,
and set $I_\emptyset = (0,0)$.

Using $\catA \simeq \catmod^{\nil} k[t]$ one can deduce 

\begin{lem*}%\label{lem:Jordan:es}
Consider $\catA$ over an arbitrary field $k$.
\begin{enumerate}[nosep, label=(\arabic*)]
\item
$\Iso(\catA) = \{[I_\lambda] \mid \lambda: \tpar \}$.

\item
Simple objects of $\catA$ are isomorphic to $I_{(1)}=(k,0)$. 
Indecomposable objects are isomorphic to 
$I_{(n)}$ with some $n \in \bbN$.
\end{enumerate} 
\end{lem*}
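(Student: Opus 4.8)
The plan is to transport both assertions through the stated equivalence $\catA \simeq \catmod^{\nil} k[t]$ and read them off from the structure theory of finitely generated modules over the principal ideal domain $k[t]$. Under this equivalence a pair $(V,x)$ becomes the $k$-space $V$ viewed as a $k[t]$-module with $t$ acting by the endomorphism $x$; since $V$ is finite dimensional over $k$ and $x$ is nilpotent, $V$ is a torsion module annihilated by some power $t^N$.

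For (1) I would first establish the decomposition. By the structure theorem over the PID $k[t]$, $V$ is a direct sum of cyclic modules $k[t]/(p_i^{e_i})$ with each $p_i$ irreducible. Nilpotency of the $t$-action forces the annihilator to be $(t^N)$, so the only irreducible that can occur is $t$ itself, and hence $V \cong \bigoplus_i k[t]/(t^{\lambda_i})$. In the basis $1,t,\dots,t^{n-1}$ the module $k[t]/(t^n)$ has $t$ acting by the nilpotent shift, which is conjugate to $J_n$; thus $k[t]/(t^n)$ corresponds to $I_{(n)}$, and rearranging the exponents $\lambda_i$ into a non-increasing sequence $\lambda$ gives $(V,x) \cong I_\lambda$. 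This yields the surjectivity of $\lambda \mapsto [I_\lambda]$. For the uniqueness of $\lambda$ I would invoke the uniqueness clause of the structure theorem; concretely, the numbers $\dim_k \ker x^{\,j} - \dim_k \ker x^{\,j-1}$ recover the transpose $\lambda'$, so distinct partitions give non-isomorphic objects.

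For (2), a simple object corresponds to a simple $k[t]$-module on which $t$ acts nilpotently; simple $k[t]$-modules are $k[t]/\mathfrak{m}$ for maximal ideals $\mathfrak{m}$, and nilpotency forces $\mathfrak{m}=(t)$, giving $k[t]/(t)\cong k$ with zero action, i.e.\ $I_{(1)}$. Each $k[t]/(t^n)\cong I_{(n)}$ is indecomposable because its endomorphism ring $k[t]/(t^n)$ is local and so has no nontrivial idempotents; conversely every object is by (1) a direct sum of such $I_{(n)}$, so by Krull--Schmidt every indecomposable is some $I_{(n)}$. The only point demanding genuine care --- and the step I would highlight --- is that $k$ is an arbitrary, possibly non-algebraically-closed, field; what rescues the argument is that the minimal polynomial of a nilpotent operator is a power of $t$, which splits over every field, so no algebraic-closure hypothesis is needed and the Jordan-type classification holds verbatim. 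Everything else is bookkeeping: matching $k[t]/(t^n)$ with $J_n$ and ordering the exponents.
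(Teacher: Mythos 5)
Your proof is correct and follows exactly the route the paper indicates: the paper states the lemma with the remark that it can be deduced from the equivalence $\catA \simeq \catmod^{\nil} k[t]$ and gives no further details, and your argument via the structure theorem over the PID $k[t]$ (with nilpotency forcing the prime $t$, so no hypothesis on $k$ is needed) is the standard way to fill that in. Nothing is missing.
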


Thus $\Hcl$ has a basis parametrized by partitions.

Recall that $\Hcl$ has a $K_0(\catA)$-grading.
Since there is a short exact sequence
$0 \to I_{(m)} \to I_{(l)} \to I_{(l)}/I_{(m)} \simeq I_{(l-m)} \to 0$
in $\catA$, we deduce

\begin{lem*}%\label{prp:K0}
Over an arbitrary field $k$, 
we have an isomorphism $K_0(\catA) \simto \bbZ$ of modules 
given by $\ol{I_\lambda} \mapsto \abs{\lambda}$.
\end{lem*}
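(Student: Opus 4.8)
The plan is to produce the isomorphism explicitly via the $k$-dimension and then verify bijectivity by exhibiting the class of the unique simple object as a generator. First I would define the additive invariant $d\colon \Ob(\catA) \to \bbN$ by $d(M) := \dim_k M$. Since every short exact sequence $0 \to A \to B \to C \to 0$ in $\catA$ satisfies $\dim_k B = \dim_k A + \dim_k C$, the function $d$ respects the defining relations of the Grothendieck group and hence descends to a homomorphism $d\colon K_0(\catA) \to \bbZ$. As $d(\ol{I_\lambda}) = \dim_k k^{\abs{\lambda}} = \abs{\lambda}$, this $d$ already realizes the proposed assignment $\ol{I_\lambda} \mapsto \abs{\lambda}$, so it remains to check that $d$ is an isomorphism.

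Next I would show that $K_0(\catA)$ is generated by the single class $\ol{I_{(1)}}$. Using the short exact sequence $0 \to I_{(m)} \to I_{(l)} \to I_{(l-m)} \to 0$ recorded just above the statement (for $0 < m < l$), I obtain $\ol{I_{(l)}} = \ol{I_{(m)}} + \ol{I_{(l-m)}}$; taking $m = 1$ and inducting on $l$ gives $\ol{I_{(n)}} = n\,\ol{I_{(1)}}$ for all $n \ge 1$. Since $I_\lambda = I_{(\lambda_1)} \oplus I_{(\lambda_2)} \oplus \cdots$, additivity of $\ol{\cdot}$ on direct sums yields $\ol{I_\lambda} = \sum_i \ol{I_{(\lambda_i)}} = \abs{\lambda}\,\ol{I_{(1)}}$. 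Because every object of $\catA$ is isomorphic to some $I_\lambda$ by the earlier Lemma, the classes $\ol{I_\lambda}$ span $K_0(\catA)$, and hence $\ol{I_{(1)}}$ generates the whole group.

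Combining the two steps, the homomorphism $d$ sends the generator $\ol{I_{(1)}}$ to $1 \in \bbZ$, so $d$ is surjective; moreover $\ol{I_{(1)}}$ cannot be torsion, since $n\,\ol{I_{(1)}}$ maps to $n \ne 0$ for every $n \ge 1$. Therefore $K_0(\catA) = \bbZ\,\ol{I_{(1)}} \simto \bbZ$ and $d$ is the desired isomorphism carrying $\ol{I_\lambda}$ to $\abs{\lambda}$.

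I do not expect a genuine obstacle here; the only point requiring care is injectivity, i.e.\ ruling out relations in $K_0(\catA)$ beyond those forced by additivity on short exact sequences. This is exactly what the dimension homomorphism $d$ supplies, since a generator is sent to a non-torsion element and no nontrivial relation can then collapse the group. A more structural alternative would be to invoke the general fact that for a length category $K_0$ is free abelian on the isomorphism classes of simple objects; as the unique simple object of $\catA$ is $I_{(1)}$, this gives $K_0(\catA) \cong \bbZ$ at once. I would nonetheless favor the hands-on argument above, as it directly produces the stated formula $\ol{I_\lambda} \mapsto \abs{\lambda}$.
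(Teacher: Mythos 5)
Your proposal is correct and follows the same route the paper indicates: the paper offers no written proof beyond pointing at the short exact sequence $0 \to I_{(m)} \to I_{(l)} \to I_{(l-m)} \to 0$, and your argument simply fleshes out the intended deduction, pairing that sequence (plus additivity over the direct-sum decomposition $I_\lambda = \oplus_i I_{(\lambda_i)}$) with the dimension homomorphism to get both surjectivity and injectivity. No gaps.
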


Since $\Hom_{\catA}(I_{(1)},I_{(1)})=k$ 
and $\Ext^1_{\catA}(I_{(1)},I_{(1)})=k$,
we also have

\begin{lem*}
$\EF{\alpha}{\beta}=0$ for any $\alpha, \beta \in K_0(\catA)$.
\end{lem*}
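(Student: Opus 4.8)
The plan is to exploit the $\bbZ$-bilinearity of the Euler pairing together with the preceding lemma computing $K_0(\catA)$. Since $\EF{\cdot}{\cdot}$ depends only on the classes $\ol{A},\ol{B} \in K_0(\catA)$ and is bilinear, and since the preceding lemma gives an isomorphism $K_0(\catA) \simto \bbZ$ via $\ol{I_\lambda}\mapsto\abs{\lambda}$, the group $K_0(\catA)$ is cyclic with generator $\ol{I_{(1)}}$, the class of the unique (up to isomorphism) simple object $I_{(1)}$. Hence it suffices to evaluate $\EF{I_{(1)}}{I_{(1)}}$: writing $\alpha = a\,\ol{I_{(1)}}$ and $\beta = b\,\ol{I_{(1)}}$ with $a,b\in\bbZ$, bilinearity yields $\EF{\alpha}{\beta} = ab\,\EF{I_{(1)}}{I_{(1)}}$, so the whole statement collapses to a single scalar.

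For that scalar I would use that $\catA$ is hereditary (condition (iv)), so that $\Ext^i_{\catA}(-,-)=0$ for $i\ge 2$ and the alternating sum defining $\EF{\cdot}{\cdot}$ reduces to $\EF{I_{(1)}}{I_{(1)}} = \dim_k\Hom_{\catA}(I_{(1)},I_{(1)}) - \dim_k\Ext^1_{\catA}(I_{(1)},I_{(1)})$. The two inputs are precisely the dimension counts recorded immediately before the statement, namely $\Hom_{\catA}(I_{(1)},I_{(1)})=k$ and $\Ext^1_{\catA}(I_{(1)},I_{(1)})=k$, both one-dimensional over $k$. Therefore $\EF{I_{(1)}}{I_{(1)}} = 1-1 = 0$, and by the reduction above $\EF{\alpha}{\beta}=0$ for all $\alpha,\beta \in K_0(\catA)$.

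There is essentially no obstacle here: the substance has already been front-loaded into the two preceding lemmas (the identification $K_0(\catA)\simto\bbZ$ and the self-$\Hom$/self-$\Ext^1$ dimensions of $I_{(1)}$), which themselves rest on the equivalence $\catA \simeq \catmod^{\nil} k[t]$. The only point meriting a word of care is that one must take the generator of $K_0(\catA)$ to be the class of the \emph{simple} object $I_{(1)}$ rather than that of an arbitrary $I_\lambda$; once this choice is made, bilinearity does all the remaining work and no case analysis over partitions is required.
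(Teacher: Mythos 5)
Your argument is correct and is precisely the one the paper intends: the lemma is stated right after the dimension counts $\Hom_{\catA}(I_{(1)},I_{(1)})=k$ and $\Ext^1_{\catA}(I_{(1)},I_{(1)})=k$, and the implicit proof is exactly your reduction via $K_0(\catA)\simeq\bbZ$ generated by $\ol{I_{(1)}}$, bilinearity, and heredity, giving $\EF{I_{(1)}}{I_{(1)}}=1-1=0$. No discrepancy to report.
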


Using these lemmas, one can write down the structure of 
the classical Hall algebra $\Hcl$ as 
\begin{align}\label{eq:Hall(A)}
\begin{split}
&\Hcl 
=\bigl(\F(\catA),*,[0],\Delta,\epsilon,S\bigr), \quad
 \F(\catA) = \oplus_{\lambda: \tpar}\bbC[I_\lambda],
\\
&[I_\mu]*[I_\nu] = 
 \sum_{\lambda: \tpar} g^{\lambda}_{\mu,\nu} [I_\lambda],
 \quad
 g^{\lambda}_{\mu,\nu} := \abs{\calG^{\lambda}_{\mu,\nu}},
 \quad
 \calG^{\lambda}_{\mu,\nu} := 
 \calG^{I_\lambda}_{I_\mu, I_\nu} = 
 \{N \subset I_\lambda \mid 
   N \simeq I_{\nu}, I_\lambda /N \simeq I_\mu\},
\\
&\Delta([I_\lambda]) := \sum_{\mu, \nu} 
 a_\lambda^{-1} a_\mu a_\nu g^\lambda_{\mu,\nu} 
 \cdot [I_\mu] \otimes [I_\nu],
 \quad
 a_\lambda :=a_{I_\lambda} = \abs{\Aut_{\catA}(I_\lambda)}.
\end{split}
\end{align}

The grading by $K_0(\catA)=\bbZ$ can be restated as 
\begin{lem}\label{lem:graded}
For partitions $\lambda,\mu,\nu$ 
with $\abs{\lambda} \neq \abs{\mu}+\abs{\nu}$,
we have 
$g^{\lambda}_{\mu,\nu} = 0$.
\end{lem}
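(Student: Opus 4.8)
The plan is to prove the statement by contraposition, reducing it to the $K_0(\catA)$-grading of $\Hcl$ recorded just above. Concretely, I will show that if the structure constant $g^{\lambda}_{\mu,\nu}$ does not vanish, then necessarily $\abs{\lambda} = \abs{\mu} + \abs{\nu}$; the contrapositive of this is exactly the assertion of the lemma.

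First I would unwind the definition of the structure constant from \eqref{eq:Hall(A)}, namely $g^{\lambda}_{\mu,\nu} = \abs{\calG^{\lambda}_{\mu,\nu}}$ with
\[
 \calG^{\lambda}_{\mu,\nu} =
 \{N \subset I_\lambda \mid N \simeq I_\nu, \ I_\lambda/N \simeq I_\mu\}.
\]
Assuming $g^{\lambda}_{\mu,\nu} \neq 0$, the set $\calG^{\lambda}_{\mu,\nu}$ is nonempty, so we may fix a subobject $N \subset I_\lambda$ with $N \simeq I_\nu$ and $I_\lambda/N \simeq I_\mu$.

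Next, the inclusion of $N$ into $I_\lambda$ furnishes a short exact sequence
\[
 0 \to I_\nu \to I_\lambda \to I_\mu \to 0
\]
in $\catA$, with the subobject $I_\nu$ on the left and the quotient $I_\mu$ on the right. Passing to the Grothendieck group and using additivity of the class along short exact sequences, this gives $\ol{I_\lambda} = \ol{I_\mu} + \ol{I_\nu}$ in $K_0(\catA)$. I would then invoke the isomorphism $K_0(\catA) \simto \bbZ$, $\ol{I_\lambda} \mapsto \abs{\lambda}$ (the Lemma identifying the Grothendieck group above), which transports this identity to $\abs{\lambda} = \abs{\mu} + \abs{\nu}$, completing the contrapositive.

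Since the argument is entirely a piece of grading bookkeeping, there is no genuine obstacle. The only point requiring minor care is keeping the direction of the short exact sequence consistent with the convention in $\calG^{\lambda}_{\mu,\nu}$: the piece isomorphic to $I_\nu$ is the subobject $N$ and the piece isomorphic to $I_\mu$ is the quotient $I_\lambda/N$, so it sits as $0 \to I_\nu \to I_\lambda \to I_\mu \to 0$ rather than the other way around.
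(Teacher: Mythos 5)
Your proof is correct and takes essentially the same route as the paper, which states this lemma as a direct restatement of the $K_0(\catA)=\bbZ$-grading of $\Hcl$ (via $\ol{I_\lambda}\mapsto\abs{\lambda}$) and gives no further argument. Your version simply makes explicit the short exact sequence and the additivity in the Grothendieck group that the paper leaves implicit.
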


We now recall the well-known theorem of the structure of $\Hcl$.

\begin{thm}\label{thm:Hall-Jordan}
\begin{enumerate}[nosep, label=(\arabic*)]
\item 
$\Hcl$ is commutative and cocommutative. 
\item 
As a $\bbC$-algebra we have 
$\Hcl \simeq \bbC[[I_{(1)}],[I_{(1^2)}],\ldots]$.
\item 
$\Delta([I_{(1^n)}])
=\sum_{r=0}^n q^{-r(n-r)}[I_{(1^r)}] \otimes [I_{(1^{n-r})}]$.
\end{enumerate}
\end{thm}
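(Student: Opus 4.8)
The plan is to handle the three assertions in turn: parts (1) and (2) both rest on the intrinsic symmetry $g^\lambda_{\mu,\nu}=g^\lambda_{\nu,\mu}$ of the Hall numbers and on the fact that the Euler form vanishes (so every twist $\sEF{M}{N}$ equals $1$), while part (3) is a direct count. For (1) I would exploit the self-duality of $\catA\simeq\catmod^{\nil}\bbF_q[t]$. The $\bbF_q$-linear dual $D(M):=\Hom_{\bbF_q}(M,\bbF_q)$ with $t$ acting by transpose is an exact contravariant involution on $\catA$; since the transpose of a nilpotent matrix has the same Jordan type, $D(I_\lambda)\simeq I_\lambda$ for every $\lambda$, and $a_{D(M)}=a_M$. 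Applying $D$ to a short exact sequence $0\to I_\nu\to I_\lambda\to I_\mu\to 0$ yields $0\to I_\mu\to I_\lambda\to I_\nu\to 0$, giving a bijection between the sets counted by $e^\lambda_{\mu,\nu}$ and $e^\lambda_{\nu,\mu}$, hence $g^\lambda_{\mu,\nu}=g^\lambda_{\nu,\mu}$. Because the twists are trivial, commutativity of $*$ is exactly this identity, and inspecting the coefficients in \eqref{eq:Hall(A)} shows cocommutativity of $\Delta$ reduces to it as well.

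For (2) I would set up a triangularity between monomials in the generators and the basis $\{[I_\lambda]\}$. By Lemma \ref{lem:filter} and the vanishing of the twist, the product $[I_{(1^{\mu_1'})}]*[I_{(1^{\mu_2'})}]*\cdots$ indexed by the conjugate $\mu'$ equals $\sum_\lambda \abs{\calF(I_\lambda;I_{(1^{\mu_1'})},\ldots)}\,[I_\lambda]$, i.e.\ it counts filtrations of $I_\lambda$ whose $i$-th subquotient is semisimple of dimension $\mu_i'$. The radical filtration $I_\lambda\supset tI_\lambda\supset t^2I_\lambda\supset\cdots$ has $i$-th subquotient semisimple of dimension $\lambda_i'$, so taking $\lambda=\mu$ exhibits a nonempty such set and the coefficient of $[I_\mu]$ is a positive integer. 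One then shows all other terms satisfy $\lambda\vartriangleleft\mu$ in dominance order; the transition matrix from $\{[I_{(1^{\mu_1'})}]*\cdots\}_\mu$ to $\{[I_\lambda]\}_\lambda$ is therefore triangular with nonzero diagonal, hence invertible. Since $\mu\mapsto\mu'$ is a bijection on partitions and $\Hcl$ is commutative by (1), these products run over all monomials in the $[I_{(1^n)}]$, so the generators are algebraically independent and span, proving $\Hcl\simeq\bbC[[I_{(1)}],[I_{(1^2)}],\ldots]$.

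For (3), observe that $I_{(1^n)}=(\bbF_q^n,0)$ is semisimple, so every submodule is an arbitrary subspace; thus $g^{(1^n)}_{\mu,\nu}$ is nonzero only when $\mu=(1^{n-r})$ and $\nu=(1^r)$, in which case it is the Gaussian binomial $\bnm{n}{r}$ counting $r$-dimensional subspaces of $\bbF_q^n$. Substituting into the coproduct formula in \eqref{eq:Hall(A)} gives the coefficient $a_{(1^n)}^{-1}a_{(1^{n-r})}a_{(1^r)}\bnm{n}{r}$ of $[I_{(1^{n-r})}]\otimes[I_{(1^r)}]$, where $a_{(1^m)}=\abs{\GL_m(\bbF_q)}=q^{\binom{m}{2}}\prod_{i=1}^m(q^i-1)$. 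A short simplification (the $\prod(q^i-1)$ factors cancel, leaving $q^{\binom{r}{2}+\binom{n-r}{2}-\binom{n}{2}}=q^{-r(n-r)}$) collapses the coefficient to $q^{-r(n-r)}$, and reindexing $r\leftrightarrow n-r$, which leaves $q^{-r(n-r)}$ invariant, yields the stated formula.

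The main obstacle is the dominance triangularity in (2): that the coefficient of $[I_\lambda]$ in $[I_{(1^{\mu_1'})}]*[I_{(1^{\mu_2'})}]*\cdots$ vanishes unless $\lambda\trianglelefteq\mu$. This is the combinatorial heart of the argument (it is the classical theory of Hall polynomials, cf.\ \cite[Chap.~I\!I]{M}), and I would establish it from the constraint that a filtration of $I_\lambda$ with semisimple subquotients of the prescribed dimensions forces the quantities $\dim t^jI_\lambda=\sum_i\max(\lambda_i-j,0)$ to obey precisely the inequalities defining $\lambda\trianglelefteq\mu$.
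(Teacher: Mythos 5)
Your proposal is correct and follows essentially the same route as the paper: the self-duality of $\catA$ (the paper phrases it via annihilator subspaces $N\mapsto N^\perp$, you via the exact contravariant functor $D$, which amounts to the same bijection) for (1), dominance-order triangularity of the products $[I_{(1^{\mu'_1})}]*[I_{(1^{\mu'_2})}]*\cdots$ against the basis $\{[I_\lambda]\}$ for (2), and the direct Grassmannian/automorphism-group count for (3).
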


\begin{proof}
\begin{enumerate}[nosep]
\item
The commutativity is the consequence of certain duality on $\catA$.
For $N=(V,x) \in \Ob(\catA)$, 
the linear dual $V^*$ of $V$ and 
the transpose ${}^t x :V^* \to V^*$ of $x: V \to V$ 
yield $N^*:=(V^*, {}^t x) \in \Ob(\catA)$.
Note that we have $I_\lambda^* \simeq I_\lambda$.
Now  for 
$N \in 
\calG^{\lambda}_{\mu,\nu}
=\{N \subset I_{\lambda} \mid 
   N \simeq I_{\nu}, \, I_{\lambda}/N \simeq I_{\mu}\}$
we set 
$N^\perp := \{\xi \in I_{\lambda}^* \mid \xi(N)=0\}$.
Then since $N^\perp \simeq I_\mu$ and 
$I_\lambda^*/N^\perp \simeq I_\nu$,
the correspondence $N \mapsto N^\perp$ gives a bijection
\[
 \calG^{\lambda}_{\mu,\nu}
 \longsimto
 \{M \subset I_{\lambda}^* \simeq I_{\lambda} \mid 
   M \simeq I_\mu, \ I_{\lambda}^*/M \simeq I_{\nu}\}
 =\calG^{\lambda}_{\nu,\mu}.
\]
Thus we have 
$g^{\lambda}_{\mu,\nu}=g^{\lambda}_{\nu,\mu}$
for any partitions $\lambda,\mu, \nu$,
which implies the commutativity.

By the description \eqref{eq:Hall(A)} we find that 
the cocommutativity is a consequence of the commutativity.

\item
We follow \cite[Chap.I\!I \S2 (2.3)]{M} and \cite[\S2.2]{S}. 
For a later purpose we write down the proof.

Writing a partition $\lambda$ as 
$\lambda=(1^{l_1},2^{l_2}, \ldots, n^{l_n})$,
we set
\[
 X_{\lambda} := 
 [I_{(1^{l_n})}] * [I_{(1^{l_{n-1}+l_n})}] * \cdots * 
 [I_{(1^{l_1+ \cdots +l_n})}].
\]
It can be expanded as
\begin{equation}\label{eq:polynom:XI}
 X_\lambda = \sum_{\mu: \, \abs{\mu}=\abs{\lambda}}
 a_{\lambda \mu} [I_\mu], \quad 
 a_{\lambda \mu} \in \bbC.
\end{equation}
Assume $a_{\lambda \mu} \neq0$ and as an object of $\catA$
express $I_\mu$ as $I_\mu=(V,x)$.
Then by Lemma \ref{lem:filter},
there exists a filtration 
\[
 V^{\bullet}=(0=V^0 \subset V^1 \subset \cdots \subset V^n=V)
\]
such that for each $i=1,\ldots,n$ we have 
$\dim(V^i/V^{i-1})=l_i+ \cdots + l_n$ and $x(V^i) \subset V^{i-1}$.
In particular $\Ker x^i \supset V^i$, so we have 
\begin{equation}\label{eq:K>=V}
 \dim(\Ker x^i) \ge
 \dim V^i=l_1+2l_2+ \cdots +(i-1)i_{i-1}+i(l_i+\cdots+l_n).
\end{equation}

Now for a partition $\nu=(1^{n_1},2^{n_2}, \ldots)$ 
and $i \in \bbZ_{>0}$ we set
\begin{equation*}
 \sigma_i(\nu):=n_1+2n_2+ \cdots +(i-1)n_{i-1}+i(n_i+n_{i+1}+\cdots),
\end{equation*}
and define a partial order $\succeq$ on partitions by 
\begin{align}\label{eq:succeq}
 \alpha \succeq \beta \iff 
 \abs{\alpha} = \abs{\beta} 
 \text{ and for each $i \in \bbZ_{>0}$ we have }
 \sigma_i(\alpha) \le \sigma_i(\beta).
\end{align}
Since %using the transpose $\mu'$ of $\nu$ 
\begin{equation}\label{eq:sigma_i}
 \sigma_i(\nu) = \nu'_1+\cdots+\nu'_i,
\end{equation}
we have $\dim(\Ker x^i)=\sigma_i(\mu)$.
Then from  \eqref{eq:K>=V} we find that 
$a_{\lambda \mu} \neq 0$  $\Longrightarrow$ $\lambda \succeq \mu$.
Moreover if $\lambda = \mu$ then the filtration $V^\bullet$ 
is uniquely determined.
Therefore \eqref{eq:polynom:XI} has the form 
\begin{align*}
 X_{\lambda} = 
 [I_{\lambda}] + \sum_{\mu \prec \lambda} a_{\lambda \mu} [I_\mu],
 \quad 
 a_{\lambda \mu} \in \bbC.
\end{align*}
Then the matrix $A=(a_{\lambda \mu})_{\lambda,\mu: \tpar}$
is upper triangular and all the diagonal entries are $1$. 
Thus $A$ has its inverse matrix $A^{-1}$, and writing 
$A^{-1}=(a^{\lambda \mu})_{\lambda,\mu: \tpar}$ we have
\begin{equation}\label{eq:I-X}
 [I_{\lambda}]=\sum_{\mu \preceq \lambda}a^{\lambda \mu} X_{\lambda}.
\end{equation}
Thus $\Hcl$ is expanded by $X_\lambda$'s.
Since $X_\lambda$ is a product of $[I_{(1^n)}]$'s
and $\Hcl$ is commutative,
we find $\Hcl \simeq \bbC[[I_{(1)}],[I_{(1^2)}],\ldots]$.

\item
Explanation will be given in the last part of 
\S \ref{subsec:Hcl:pair}.
\end{enumerate}
\end{proof}

\begin{rmk}\label{rmk:dom}
By \eqref{eq:sigma_i}, 
the partial order \eqref{eq:succeq} is equivalent to 
the dominance order \cite[Chap.I \S1 p.7]{M}. 
Precisely speaking, we have 
\begin{align*}
\alpha \succeq \beta \iff
\alpha' \le \beta' \defiff
\abs{\alpha'}=\abs{\beta'}
\text{and for each $i\in\bbZ_{>0}$ we have }
\alpha'_1+\cdots+\alpha'_i \le \beta'_1+\cdots+\beta'_i.
\end{align*}
\end{rmk}

%%%%%%%%%%%%%%%%%%%%%%%%%%%%%%%%%%%%%%%%%%%%%%%%%%%%%%%%%%%%%%%%%%%%%%
\subsection{Some examples of the structure constants and Pieri rule}

For $n,r \in \bbN$ with $n \ge r$, 
the $q$-binomial coefficient $\bnm{n}{r}_q$ is given by 
\[
 (x;q)_n := \prod_{i=1}^n (1-x q^{i-1}), \quad
 \begin{bmatrix} n \\ m \end{bmatrix}_q := 
 \frac{(q;q)_n}{(q;q)_r (q;q)_{n-r}}.
\]
For $n<r$, we set $\bnm{n}{r}:=0$.

Let us denote by $\GL(n)$ the general linear group of degree $n$,
and by $\Gr(n,r)$ the Grassmannian consisting of 
$r$-dimensional subspaces of the linear space $k^n$.
Then, over $k=\bbF_q$ we have
$\abs{\GL(n)} = (q^n-1)(q^n-q) \cdots (q^n -q^{n-1})$
and $\abs{\Gr(n,r)} = \bnm{n}{r}_q$.

\begin{lem}\label{lem:calG:1}
\begin{enumerate}[nosep, label=(\arabic*)]
\item
$\calG^{(1^n)}_{(1^{n-r}),(1^r)}  = \Gr(n,r)$.
In particular, for $k=\bbF_q$ we have 
$g^{(1^n)}_{(1^{n-r}),(1^r)} =\bnm{n}{r}_q$.

\item
$\calG^{(n)}_{(n-r),(r)}$ consists of one point.
In particular, for $k=\bbF_q$ we have 
$g^{(n)}_{(n-r),(r)}=1$.
\end{enumerate}
\end{lem}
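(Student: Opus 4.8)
The plan is to pass to the equivalent category $\catmod^{\nil} k[t]$ used throughout \S\ref{subsec:Jordan} and to compute the two submodule sets directly, exploiting the fact that both $I_{(1^n)}$ and $I_{(n)}$ have very simple module structures. Under the equivalence $\catA \simeq \catmod^{\nil} k[t]$ we have $I_{(1^n)} \simeq (k[t]/(t))^{\oplus n}$, the module on which $t$ acts as zero, and $I_{(n)} \simeq k[t]/(t^n)$, the cyclic uniserial module of length $n$. With these identifications fixed, both parts of Lemma \ref{lem:calG:1} become an inspection of submodule lattices.

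For part (1), I would note that on $I_{(1^n)}$ the action of $t$ is zero, so a $k[t]$-submodule is nothing but a $k$-linear subspace; moreover, for any such submodule $N$, both $N$ and the quotient $I_{(1^n)}/N$ again carry the zero action of $t$, hence are automatically isomorphic to $I_{(1^{\dim_k N})}$ and $I_{(1^{\,n-\dim_k N})}$ respectively. Thus the condition $N \simeq I_{(1^r)}$ reduces to $\dim_k N = r$, the condition $I_{(1^n)}/N \simeq I_{(1^{n-r})}$ is automatic, and $\calG^{(1^n)}_{(1^{n-r}),(1^r)}$ is exactly the set of $r$-dimensional subspaces of $k^n$, i.e.\ $\Gr(n,r)$. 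The cardinality statement then follows from the identity $\abs{\Gr(n,r)} = \bnm{n}{r}_q$ recorded just before the lemma.

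For part (2), I would use that $k[t]/(t^n)$ is uniserial: its $k[t]$-submodules are precisely the ideals $(t^j)/(t^n)$ for $0 \le j \le n$, forming a single chain. Each satisfies $(t^j)/(t^n) \simeq k[t]/(t^{n-j}) = I_{(n-j)}$ and has $k$-dimension $n-j$, so there is exactly one submodule of each dimension. Consequently there is a unique submodule $N = (t^{n-r})/(t^n)$ with $N \simeq I_{(r)}$, and its quotient is automatically $k[t]/(t^{n-r}) = I_{(n-r)}$, matching the required $\mu = (n-r)$. Hence $\calG^{(n)}_{(n-r),(r)}$ is a single point and $g^{(n)}_{(n-r),(r)} = 1$.

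There is essentially no obstacle here; the only point demanding a little care is the bookkeeping of isomorphism types of sub- and quotient modules, namely checking in (1) that the quotient condition is redundant and in (2) that the unique $r$-dimensional submodule is genuinely isomorphic to $I_{(r)}$ rather than merely of the correct dimension. Both facts are immediate from the explicit classification of submodules in these two special cases, so no real difficulty arises.
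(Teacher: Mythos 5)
Your proof is correct and follows essentially the same route as the paper's: identifying submodules of $I_{(1^n)}$ with $k$-linear subspaces to get the Grassmannian, and using the uniqueness of the $r$-dimensional subrepresentation of the uniserial module $I_{(n)}$. You merely spell out in the $\catmod^{\nil}k[t]$ language the details (redundancy of the quotient condition, isomorphism type of the unique submodule) that the paper leaves implicit.
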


\begin{proof}
\begin{enumerate}[nosep, label=(\arabic*)]
\item
Setting $M:=I_{(1)}^{\oplus n}$, 
we see that 
$\calG^{(1^n)}_{(1^{n-r}),(1^r)}=
 \{R \subset M \mid 
   R \simeq I_{(1)}^{\oplus r}, \, 
   M/R \simeq I_{(1)}^{\oplus n-r}\}$
consists of $r$-dimensional $k$-linear subspaces of $M$,
which is nothing but the Grassmannian $\Gr(n,r)$. 
\item 
The $r$-dimensional irreducible sub-representation of $I_{(n)}$
uniquely exists, which is $I_{(r)}$.
\end{enumerate}
\end{proof}

We turn to more general examples of $g^\lambda_{\mu,\nu}$.
Let us recall the notion of vertical strip.

\begin{dfn*}
Let $\lambda$ and $\mu$ be partitions.
\begin{enumerate}[nosep]
\item
We define $\lambda \supset \mu$ $\defiff$ 
$\lambda_i\ge\mu_i$ for any $i \in \bbZ_{>0}$.

\item
We call $\theta := \lambda-\mu$ a vertical strip 
if $0 \le \theta_i \le 1$ for any $i \in \bbZ_{>0}$.
\end{enumerate}
\end{dfn*}

We omit the proof of the following proposition.
See \cite[Chap.I\!I \S4 (4.4)]{M}.
In \cite[Example 2.4]{S} one may 
find a proof for a special case.

\begin{prp}\label{prp:vertical}
Consider the situation over an arbitrary field $k$.
Let $\beta$ and $\mu$ be partitions such that 
%$\mu=(1^{l_1}, \ldots , m^{l_m})$,
$\mu-\beta$ is a vertical strip.
Set $p:=\abs{\mu}-\abs{\beta} \in \bbN$, and 
assume $M,P\in \Ob(\catA)$ satisfy the following conditions.
\[
 M=I_\mu, \ P \subset M, \ P \simeq I_{(1^p)}, \ 
 M/P \simeq \beta.
\]
For a partition $\lambda$, 
we define the set $\calH_{\lambda}(P,M)$ by 
\[
 \calH_{\lambda}(P,M) := 
 \{L \subset P \mid M/L \simeq I_\lambda\}.
\]
Then,
$\calH_{\lambda}(P,M) \neq \emptyset$ $\iff$
$\beta \subset \lambda \subset \mu$
(so $\mu - \lambda$ is a vertical strip).
Moreover, if $\calH_{\lambda}(P,M) \neq \emptyset$, then %we have 
\[
 \calH_{\lambda}(P,M) \simeq 
 \prod_{i=1}^m\Gr(\theta_i'+\varphi_i',\theta_i') \times 
 (\bbA^{\theta_i'} \times \bbA^{\sum_{j>i}\varphi_i'}),
\]
where $\bbA^d$ is the $d$-dimensional affine space over $k$
and $\varphi:=\lambda-\beta$, $\theta:=\mu-\lambda$.
\end{prp}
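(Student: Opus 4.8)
The plan is to work throughout in the equivalent category $\catmod^{\nil}k[t]$, writing $M = I_\mu \cong \bigoplus_i k[t]/(t^{\mu_i})$. Then $P \simeq I_{(1^p)}$ is exactly the condition $tP = 0$, i.e.\ $P \subseteq \Ker t$ (the socle of $M$); in particular every $L \in \calH_\lambda(P,M)$ is a subspace of $\Ker t$ and is automatically a submodule. The decisive first step is to convert ``$M/L \simeq I_\lambda$'' into linear-algebraic data. Since $L \subseteq \Ker t$ one has $t^i(M/L) \cong t^i M/(L \cap t^i M)$ and $L \cap t^i M = L \cap W_i$, where a nilpotent module is determined up to isomorphism by the dimensions of $\Img(t^i)$. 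Using $\dim \Ker(x^i) = \sigma_i(\nu) = \nu'_1 + \cdots + \nu'_i$ from \eqref{eq:sigma_i} (equivalently $\dim t^i I_\nu = \sum_{k>i}\nu'_k$), I would show, with $\theta := \mu - \lambda$,
\[
 M/L \simeq I_\lambda \iff \dim(L \cap W_i) = \textstyle\sum_{k>i}\theta'_k \ \ (\forall i \ge 0),
 \qquad W_i := \Ker t \cap t^i M .
\]
Here $\Ker t = W_0 \supseteq W_1 \supseteq \cdots$ with $\dim W_i = \mu'_{i+1}$, and the same computation applied to $P$ (using $M/P \simeq I_\beta$) gives $\dim(P \cap W_i) = \sum_{k>i}(\mu'_k - \beta'_k)$. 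Since $L \subseteq P$ forces $L \cap W_i = L \cap (P \cap W_i)$, the problem becomes intrinsic to the filtered space $(P, F_\bullet)$, $F_i := P \cap W_i$, whose $i$-th graded piece $F_{i-1}/F_i$ has dimension $\psi'_i := \mu'_i - \beta'_i = \theta'_i + \varphi'_i$: the set $\calH_\lambda(P,M)$ is precisely $\{L \subseteq P \mid \dim(L \cap F_i) = \sum_{k>i}\theta'_k\}$.

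Nonemptiness I would then read off from the graded pieces. The prescribed profile forces $L$ to occupy exactly $\theta'_i$ dimensions of the layer $F_{i-1}/F_i$, so a valid $L$ exists iff $0 \le \theta'_i \le \psi'_i$ in every layer; unwinding, $\theta'_i \ge 0 \iff \lambda \subseteq \mu$ and $\theta'_i \le \psi'_i \iff \beta \subseteq \lambda$, which together give $\beta \subseteq \lambda \subseteq \mu$. The parenthetical claim that $\mu - \lambda$ is then a vertical strip is automatic, since it is dominated by the given vertical strip $\mu - \beta$.

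For the structure statement, the key is the map sending $L$ to the family of its graded images $\overline{L}_i := \Img\bigl(L \cap F_{i-1} \to F_{i-1}/F_i\bigr)$. Each $\overline{L}_i$ is a $\theta'_i$-dimensional subspace of the $\psi'_i$-dimensional space $F_{i-1}/F_i$, hence a point of $\Gr(\psi'_i,\theta'_i) = \Gr(\theta'_i+\varphi'_i,\theta'_i)$; letting $i$ range produces the product of Grassmannians. After fixing a splitting $P = \bigoplus_i G_i$ of the flag, I would recover $L$ from the $\overline{L}_i$ by choosing lifts of a leading basis through the deeper layers; using changes of basis of $L$ by its deeper vectors to normalise, the residual freedom in layer $i$ is a space of homomorphisms $\overline{L}_i \to \bigoplus_{j>i} G_j/\overline{L}_j$, whose source and target have dimensions $\theta'_i$ and $\sum_{j>i}\varphi'_j$ respectively; these are the affine factors recorded in the statement. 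This exhibits $\calH_\lambda(P,M)$ as an explicitly trivialised iterated affine bundle over $\prod_i \Gr(\theta'_i+\varphi'_i,\theta'_i)$.

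The hard part will be this final bookkeeping: choosing the lifting coordinates so that the gauge redundancy (change of basis of $L$ by its deeper vectors) is quotiented exactly once, verifying that the bundle is genuinely trivial over an arbitrary field $k$ and not merely fibrewise, and confirming that the affine factor attached to each column $i$ has precisely the form recorded in the statement. Everything preceding that is a direct translation through $\catmod^{\nil}k[t]$ together with the dimension count \eqref{eq:sigma_i}.
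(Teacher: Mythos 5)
The paper does not actually prove Proposition \ref{prp:vertical}; it is explicitly omitted with a reference to Macdonald (Chap.~II \S 4 (4.4)), so there is no in-text argument to compare against. Your proposal is, in substance, the standard proof from that reference, and its skeleton is correct: since $L\subseteq P\subseteq\Ker t$, the reduction of $M/L\simeq I_\lambda$ to the linear conditions $\dim(L\cap t^iM)=\sum_{k>i}\theta'_k$ via $t^i(M/L)\cong t^iM/(L\cap t^iM)$ is valid, the passage to the filtered space $(P,F_\bullet)$ with $\dim(F_{i-1}/F_i)=\theta'_i+\varphi'_i$ is right, and the nonemptiness criterion $0\le\theta'_i\le\theta'_i+\varphi'_i$ does translate to $\beta\subseteq\lambda\subseteq\mu$.

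One point needs care. The fibre of your map over a point of $\prod_i\Gr(\theta'_i+\varphi'_i,\theta'_i)$ is a torsor under $\bigoplus_i\Hom\bigl(\ol{L}_i,\bigoplus_{j>i}G_j/\ol{L}_j\bigr)$, hence an affine space of dimension $\sum_i\theta'_i\sum_{j>i}\varphi'_j$; that is, the affine factor attached to column $i$ is $\bbA^{\theta'_i\cdot\sum_{j>i}\varphi'_j}$, not $\bbA^{\theta'_i}\times\bbA^{\sum_{j>i}\varphi'_j}$ as the Proposition's display literally reads. Your count is the correct one --- it is exactly what the proof of Corollary \ref{cor:Pieri} uses, where the factor $q^{\theta'_i(\varphi'_{i+1}+\cdots+\varphi'_m)}$ appears --- so you should not describe your affine factors as ``recorded in the statement''; the statement's display contains a typo which your computation silently corrects. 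As for the ``hard part'' you defer: the cleanest finish is an induction on the length of the flag --- given $L':=L\cap F_1$ and the image $\ol{L}_1\subseteq F_0/F_1$, the set of $L$ inducing both is a torsor under $\Hom(\ol{L}_1,F_1/L')$ --- which yields the trivialized iterated affine bundle directly, with no gauge-fixing to worry about, and works over an arbitrary field.
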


\begin{cor}[{Pieri formula for Hall-Littlewood polynomial}]
\label{cor:Pieri}
For partitions $\lambda,\mu$ and $p \in \bbN$, 
we have $\calG^{\nu}_{\lambda,(1^p)} \neq \emptyset \iff$
$\theta:=\nu-\mu$ is a vertical strip and $\abs{\theta}=p$.
Moreover if $\calG^{\nu}_{\lambda,(1^p)} \neq \emptyset$ and $k=\bbF_q$, 
then we have 
\[
  g^{\mu}_{\lambda,(1^p)} = 
  q^{n(\mu)-n(\lambda)-n(1^p)} \prod_{i\ge1} 
  \begin{bmatrix} \mu'_i - \mu'_{i+1} \\ 
                  \mu'_i - \lambda'_i \end{bmatrix}_{1/q}.
\]
\end{cor}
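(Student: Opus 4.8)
The plan is to realize $\calG^{\mu}_{\lambda,(1^p)}$ as one of the sets $\calH_\lambda(P,M)$ of Proposition~\ref{prp:vertical}, for the \emph{maximal} admissible choice of $P$. Write $I_\mu=(k^{\abs{\mu}},x)$. Since the endomorphism of any object isomorphic to $I_{(1^p)}$ is zero, a submodule $N\subset I_\mu$ satisfies $N\simeq I_{(1^p)}$ precisely when $N\subset\Ker x$ and $\dim N=p$; conversely every $p$-dimensional subspace of $\Ker x$ is automatically an $x$-invariant submodule isomorphic to $I_{(1^p)}$. Hence I would set $P:=\Ker x$, and record that $P\simeq I_{(1^{\ell(\mu)})}$ and $I_\mu/P\simeq I_\beta$ with $\beta:=\mu-(1^{\ell(\mu)})$ (delete the first column of $\mu$, so that $\beta'_i=\mu'_{i+1}$ and $\mu-\beta$ is a vertical strip). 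By the grading (Lemma~\ref{lem:graded}) we have $g^{\mu}_{\lambda,(1^p)}=0$ unless $\abs{\mu}=\abs{\lambda}+p$; assuming this, every $L\subset P$ with $I_\mu/L\simeq I_\lambda$ has $\dim L=p$, so the submodules counted by $\calH_\lambda(P,I_\mu)$ are exactly the elements of $\calG^{\mu}_{\lambda,(1^p)}$, which yields the identification $\calG^{\mu}_{\lambda,(1^p)}=\calH_\lambda(P,I_\mu)$.

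The first assertion then follows at once. Proposition~\ref{prp:vertical} gives $\calH_\lambda(P,I_\mu)\neq\emptyset\iff\beta\subset\lambda\subset\mu$, and because $\beta=\mu-(1^{\ell(\mu)})$ the two inclusions together read $\mu_i-1\le\lambda_i\le\mu_i$ for all $i$, i.e. $0\le\mu_i-\lambda_i\le1$, which is exactly the condition that $\theta:=\mu-\lambda$ be a vertical strip; and $\abs{\theta}=\abs{\mu}-\abs{\lambda}=p$.

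For the explicit value I would specialize to $k=\bbF_q$ and use $\abs{\Gr(a,b)}=\bnm{a}{b}_q$ and $\abs{\bbA^d}=q^d$ in the description of $\calH_\lambda(P,I_\mu)$ given by Proposition~\ref{prp:vertical}. The two skew shapes there are $\varphi=\lambda-\beta$ and $\theta=\mu-\lambda$; taking column lengths and inserting $\beta'_i=\mu'_{i+1}$ yields $\theta'_i=\mu'_i-\lambda'_i$, $\varphi'_i=\lambda'_i-\mu'_{i+1}$, and hence $\theta'_i+\varphi'_i=\mu'_i-\mu'_{i+1}$. In the present maximal case $P=\Ker x$ I expect the affine factors of the proposition to collapse to points, so that $\calH_\lambda(P,I_\mu)$ reduces to a product of Grassmannians and $g^{\mu}_{\lambda,(1^p)}=\prod_{i\ge1}\bnm{\mu'_i-\mu'_{i+1}}{\mu'_i-\lambda'_i}_q$, matching the binomials in the claimed formula.

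It then remains to match this against the claimed formula, which is stated with Gaussian binomials in base $1/q$. Applying $\bnm{n}{r}_q=q^{r(n-r)}\bnm{n}{r}_{1/q}$ with $n=\mu'_i-\mu'_{i+1}$ and $r=\mu'_i-\lambda'_i=\theta'_i$ (so $n-r=\varphi'_i$) turns each factor into $q^{\theta'_i\varphi'_i}\bnm{\mu'_i-\mu'_{i+1}}{\mu'_i-\lambda'_i}_{1/q}$. The decisive step is then the purely combinatorial identity
\[
 \sum_{i\ge1}(\mu'_i-\lambda'_i)(\lambda'_i-\mu'_{i+1}) = n(\mu)-n(\lambda)-n(1^p),
\]
which produces exactly the prefactor $q^{n(\mu)-n(\lambda)-n(1^p)}$. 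I would prove it by writing $n(\nu)=\sum_{i\ge1}\binom{\nu'_i}{2}$ and $n(1^p)=\binom{p}{2}$ with $p=\sum_i(\mu'_i-\lambda'_i)$, and expanding both sides as quadratic expressions in the columns $\mu'_i,\lambda'_i$ before reorganizing the sum. I expect this $q$-power bookkeeping, rather than the geometric reduction, to be the main obstacle; the low-rank checks $\mu=(2,2),\lambda=(2,1)$ and $\mu=(1,1,1),\lambda=(1,1)$, where the formula must return $q+1$ and $1+q+q^2$, are the sanity tests I would run first.
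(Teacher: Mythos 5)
Your reduction of $\calG^{\mu}_{\lambda,(1^p)}$ to $\calH_\lambda(\Ker x, I_\mu)$ with $\beta=\wt{\mu}:=\mu-(1^{\ell(\mu)})$ is exactly the paper's argument, and your derivation of the first assertion (the vertical-strip criterion) from Proposition \ref{prp:vertical} is correct. The problem is in the cardinality computation: your expectation that ``the affine factors of the proposition collapse to points'' in the maximal case $P=\Ker x$ is false. Nothing in the specialization $\beta=\wt{\mu}$ kills them; they contribute $q^{\sum_i \theta'_i(\varphi'_{i+1}+\cdots+\varphi'_m)}$, so the correct count is
\[
 g^{\mu}_{\lambda,(1^p)}
 =\prod_{i\ge1} q^{\theta'_i(\varphi'_{i+1}+\cdots+\varphi'_m)}
  \bnm{\theta'_i+\varphi'_i}{\theta'_i}_q ,
\]
not the bare product of Gaussian binomials. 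Concretely, take $\mu=(3,1)$, $\lambda=(3)$, $p=1$: here $\theta'=(1,0,0)$, $\varphi'=(0,0,1)$, the affine factor contributes $q^{\theta'_1\varphi'_3}=q$, and a direct count in $I_{(3,1)}$ (the lines $\langle a e_1+f_1\rangle\subset\Ker x$ with cyclic quotient) gives $g^{(3,1)}_{(3),(1)}=q$, whereas your collapsed formula gives $\bnm{1}{1}_q\bnm{0}{0}_q\bnm{1}{0}_q=1$. The same example refutes your proposed combinatorial identity: $\sum_i(\mu'_i-\lambda'_i)(\lambda'_i-\mu'_{i+1})=0$ there, while $n(\mu)-n(\lambda)-n(1^p)=1$. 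Your two sanity checks happen to be cases where all cross terms $\theta'_i\varphi'_j$ with $i<j$ vanish, which is why they would not have caught this.

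The repair is exactly the cross terms you dropped. After converting each binomial to base $1/q$, which produces $q^{\theta'_i\varphi'_i}$ per factor, the total exponent is $\sum_{i\le j}\theta'_i\varphi'_j$ (diagonal terms from the base change, off-diagonal terms from the affine factors), and the identity to prove is
\[
 \sum_{i\le j}\theta'_i\varphi'_j = n(\mu)-n(\lambda)-n(1^p).
\]
The paper verifies this by passing from columns to rows using that $\theta$ and $\varphi$ are vertical strips, so that $\sum_{i\le j}\theta'_i\varphi'_j=\sum_{i\le j}\varphi_j\theta_i$, and then substituting $\varphi_i=1-\theta_i$, $\theta_i^2=\theta_i$, $\sum_i\theta_i=p$. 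With that identity in place of yours, the rest of your write-up goes through.
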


\begin{proof}
Apply Proposition \ref{prp:vertical} to the case
$\beta=\wt{\mu}:=\mu-(1^{\ell(\mu)})$.
Since in this case 
$M/P \simeq I_{\wt{\mu}}$ and $P=\Ker x \subset M=I_\mu$,
we have 
\[
  \calH_\lambda(P,M) 
= \{L \subset P     \mid M/L     \simeq I_\lambda\}
= \{L \subset I_\mu \mid I_\mu/L \simeq I_\lambda\}
= \calG^{\lambda}_{\mu,(1^p)}.
\]
Setting $m:=\mu_1$, we have 
\[
  g^{\lambda}_{\mu,(1^p)} = \abs{\calH_\lambda(\Ker x,I_{\mu})}
= \prod_{i=1}^m q^{\theta'_i (\varphi'_{i+1}+\cdots+\varphi'_m)}
  \begin{bmatrix} \theta'_i+\varphi'_i \\ \theta'_i \end{bmatrix}_q.
\]

Since
$\varphi:=\lambda-\wt{\mu}$, $\theta:=\mu-\lambda$
we have $\theta+\varphi=\mu-\wt{\mu}$.
Thus $\theta'_i+\varphi'_i=\mu'_i-\mu'_{i+1}$, 
$\theta'_i=\mu'_i-\lambda'_i$.
Then 
\[
 \begin{bmatrix}\theta'_i+\varphi'_i \\ \theta'_i\end{bmatrix}_q
=q^{\theta'_i \varphi'_i} 
 \begin{bmatrix}\theta'_i+\varphi'_i \\ \theta'_i\end{bmatrix}_{1/q}
=q^{\theta'_i \varphi'_i} 
 \begin{bmatrix}\mu'_i-\mu'_{i+1} \\ \mu'_i - \lambda'_i\end{bmatrix}_{1/q}.
\]
Therefore we have the $q$-binomial part of the statement.
The power of $q$ is given by 
\[
 \sum_i \bigl(
 \theta'_i \varphi'_i + \theta'_i(\varphi'_{i+1}+\cdots+\varphi'_m)\bigr)
=\sum_{i \le j}\theta'_i \varphi'_j.
\] 
Since $\theta$ and $\varphi$ are vertical strip, we have 
\[
 \sum_{i \le j} \theta'_i \varphi'_j =\sum_{i \le j} \varphi_j \theta_i
\]
Then since $\varphi_i=\lambda_i-\mu_i+1=1-\theta_i$ and $\theta_i^2=\theta_i$,
we have
\[
  \sum_{i \le j} \varphi_j \theta_i
 =\sum_{i \le j} (1-\theta_i)\theta_i 
 =\sum_{i \le j} (1-\theta_i)\theta_i.
\]
Now from $\sum_i \theta_i=p$ we have 
\begin{align*}
  \sum_{i \le j} (1-\theta_i)\theta_i 
&=\sum_{i < j} (1-\theta_i)\theta_i 
 =\sum_j (j-1)\theta_j - \bigl(\sum_i \theta_i\bigr)^2/2
 +\sum_i \theta_i^2/2
\\
&=n(\mu)-n(\lambda)-p(p-1)/2
 =n(\mu)-n(\lambda)-n(1^p).
\end{align*}
\end{proof}

%%%%%%%%%%%%%%%%%%%%%%%%%%%%%%%%%%%%%%%%%%%%%%%%%%%%%%%%%%%%%%%%%%%%%%
\subsection{Hopf pairing}
\label{subsec:Hcl:pair}

Recall Green's Hopf pairing.
\[
 \pair{[I_\lambda]}{[I_\mu]} = \delta_{\lambda,\mu} a_\lambda^{-1},
 \quad
 a_\lambda := a_{I_\lambda} = \abs{\Aut(I_\lambda)}.
\] 
The formula of $a_\lambda$ is as follows.
See \cite[Chap.I \S1 (1.6)]{M} or \cite[Lemma 2.8]{S} for the proof.

\begin{lem}\label{lem:I_lambda:pair}
For a partition $\lambda$ we have 
\[
 \Aut(I_\lambda) \simeq \prod_{i\ge1} \GL(m_i) \times 
 \bigl(\bbA^{m_i} \times 
       \bbA^{\sum_{j<i} j m_j + (i-1) m_i + i \sum_{j>i}m_j}\bigr).
\]
Here we set $m_i:=m_i(\lambda) = \{j \mid \lambda_j = i\}$.
In particular, for $k=\bbF_q$ we have 
\[
 a_\lambda = q^{\abs{\lambda}+2n(\lambda)}
   \prod_{i\ge1}(q^{-1};q^{-1})_{m_i(\lambda)}
\]
with $n(\lambda) := \sum_{i \ge 1}(i-1)\lambda_i$.
Hence 
\[
 \pair{[I_\lambda]}{[I_\mu]} = 
 \frac{\delta_{\lambda,\mu}}
 {q^{\abs{\lambda}+2n(\lambda)} 
  \prod_{i\ge1}(q^{-1};q^{-1})_{m_i(\lambda)}}.
\]
\end{lem}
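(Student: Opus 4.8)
The plan is to reduce the computation to the endomorphism ring of a $k[t]$-module and then count points. Under the equivalence $\catA \simeq \catmod^{\nil} k[t]$ the object $I_\lambda$ corresponds to $M_\lambda := \bigoplus_{i\ge1}(k[t]/t^i)^{\oplus m_i}$ with $m_i := m_i(\lambda)$, so that $\Aut(I_\lambda)$ is the unit group $E^\times$ of the finite-dimensional $k$-algebra $E := \End_{k[t]}(M_\lambda)$. First I would record the elementary fact $\dim_k\Hom_{k[t]}(k[t]/t^a, k[t]/t^b) = \min(a,b)$, proved by noting that such a homomorphism is determined by the image of the generator $1$, which ranges over the elements of $k[t]/t^b$ annihilated by $t^a$. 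Writing an endomorphism of $M_\lambda$ in block form indexed by the summands, reduction of each diagonal block modulo $t$ gives a $k$-algebra map $E \to \prod_{i\ge1} M_{m_i}(k)$; a short check shows all off-diagonal contributions to a composite vanish modulo $t$, so this is a ring homomorphism, its kernel $J$ is a nilpotent ideal, and it induces $E/J \simeq \prod_i M_{m_i}(k)$. An endomorphism is invertible precisely when each of these leading diagonal matrices is, so on unit groups one obtains a short exact sequence
\[
 1 \longto 1+J \longto E^\times \longto \prod_{i\ge1}\GL(m_i) \longto 1 .
\]

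Next I would pass to cardinalities over $k = \bbF_q$. Since $J$ is nilpotent, $1+J$ is a unipotent group isomorphic as a variety to $\bbA^{\dim_k J}$; choosing a section of the reduction map exhibits $\Aut(I_\lambda)$ as a variety isomorphic to $\prod_i \GL(m_i)$ times this affine space, which is the content of the first displayed isomorphism of the lemma. Counting $\bbF_q$-points then gives $a_\lambda = q^{\dim_k J}\prod_i\abs{\GL(m_i)}$. Using $\abs{\GL(m,q)} = \prod_{r=0}^{m-1}(q^m - q^r) = q^{m^2}(q^{-1};q^{-1})_m$ and $\dim_k J = \dim_k E - \sum_i m_i^2$, the powers of $q$ recombine to
\[
 a_\lambda = q^{\dim_k E}\prod_{i\ge1}(q^{-1};q^{-1})_{m_i},
\]
so the whole computation reduces to evaluating $\dim_k E$.

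Finally I would compute $\dim_k E$ combinatorially. From the block description and the Hom-dimension fact, $\dim_k E = \sum_{i,j\ge1} m_i m_j \min(i,j)$, which equals $\sum_{p,p'}\min(\lambda_p,\lambda_{p'})$ over all ordered pairs of parts. Writing $\min(\lambda_p,\lambda_{p'}) = \sum_{s\ge1}[\lambda_p\ge s][\lambda_{p'}\ge s]$ and interchanging summations gives
\[
 \dim_k E = \sum_{s\ge1}\Bigl(\sum_{p}[\lambda_p\ge s]\Bigr)^2 = \sum_{s\ge1}(\lambda'_s)^2 = \abs{\lambda}+2n(\lambda),
\]
the last step being $2n(\lambda) = \sum_s \lambda'_s(\lambda'_s-1) = \sum_s(\lambda'_s)^2 - \abs{\lambda}$. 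Substituting back yields $a_\lambda = q^{\abs{\lambda}+2n(\lambda)}\prod_i(q^{-1};q^{-1})_{m_i}$, and the stated value of $\pair{[I_\lambda]}{[I_\mu]} = \delta_{\lambda,\mu}a_\lambda^{-1}$ follows at once.

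The main obstacle is not the cardinality but the fine \emph{variety} structure: to obtain the explicit affine factors of the lemma one must decompose $J$ into its blockwise pieces $\Hom_{k[t]}(k[t]/t^j, k[t]/t^i)$ and carefully separate the coordinates absorbed into the $\GL(m_i)$-factors from the genuinely free unipotent parameters, tracking the exponents index by index. This bookkeeping is delicate and is exactly what the plain count $\dim_k J = \dim_k E - \sum_i m_i^2$ circumvents; for the value of $a_\lambda$ and hence of the Hopf pairing the refined decomposition is never needed.
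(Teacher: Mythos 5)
Your argument is correct. The paper itself gives no proof of this lemma---it simply refers to Macdonald [Chap.\,I\!I (1.6)] and Schiffmann [Lemma 2.8]---and your computation of $a_\lambda$ via $E=\End_{k[t]}(M_\lambda)$, the exact sequence $1\to 1+J\to E^\times\to\prod_i\GL(m_i)\to 1$ with $J$ the (nilpotent) radical, and the identity $\dim_k E=\sum_s(\lambda'_s)^2=\abs{\lambda}+2n(\lambda)$ is exactly the standard argument found in those references. As for the loose end you flag: the total $\dim_k J=\sum_i(i-1)m_i^2+2\sum_{i<j}i\,m_im_j$ that your count produces does equal $\sum_i m_i\bigl(\sum_{j<i}jm_j+(i-1)m_i+i\sum_{j>i}m_j\bigr)$, so your abstract identification of $1+J$ with an affine space of that dimension already yields the first displayed isomorphism without any blockwise bookkeeping---note, though, that the display must then be read with factors $\bbA^{m_i\cdot e_i}$ (an $e_i$-fold product of $\bbA^{m_i}$'s) rather than the literal $\bbA^{m_i}\times\bbA^{e_i}$, whose dimension would not even be quadratic in the $m_i$.
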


Let us close this subsection by checking Theorem \ref{thm:Hall-Jordan} (3).
We want to compute
\[
 \Delta([I_{(1^n)}]) = 
 \sum_{r=0}^n  a_{(1^n)}^{-1} a_{(1^{n-r})} a_{(1^r)} 
 g^{(1^n)}_{(1^{n-r}),(1^r)} \cdot [I_{(1^{n-r})}] \otimes [I_{(1^r)}].
\]
By Lemma \ref{lem:I_lambda:pair} we have 
$a_{(1^n)}=(-1)^n q^{\binom{n}{2}} (q;q)_n$.
By Lemma \ref{lem:calG:1} (1) we have 
$g^{(1^n)}_{(1^{n-r}) (1^r)}=\bnm{n}{r}_q$.
Then
\[
 a_{(1^n)}^{-1} a_{(1^{n-r})} a_{(1^r)} 
 g^{(1^n)}_{(1^{n-r}),(1^r)} 
=q^{-\binom{n}{2}} q^{\binom{n-r}{2}} q^{\binom{r}{2}}
=q^{-r(n-r)},  
\]
which deduces 
$\Delta([I_{(1^n)}])
=\sum_{r=0}^n q^{-r(n-r)}[I_{(1^r)}] \otimes [I_{(1^{n-r})}]$.

%%%%%%%%%%%%%%%%%%%%%%%%%%%%%%%%%%%%%%%%%%%%%%%%%%%%%%%%%%%%%%%%%%%%%%
\subsection{A computation of antipode}

The following formula for the antipode $S$ on $\Hcl$ 
is stated in \cite[\S2.3]{S} without a proof.

\begin{prp}\label{prp:S}
\[
 S([I_{(1^n)}]) =(-1)^n q^{-\binom{n}{2}}
 \sum_{|\lambda|=n} [I_{\lambda}].
\]
\end{prp}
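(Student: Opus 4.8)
The plan is to exploit that $\Hcl$ is a genuine commutative and cocommutative Hopf algebra: since the Euler pairing vanishes, the twist $\sEF{\cdot}{\cdot}$ is trivial and the product $*$ is the untwisted one, so the antipode relation $\sum S(z_1)*z_2=\epsilon(z)\,[0]$ holds (Sweedler notation $\Delta(z)=\sum z_1\otimes z_2$). First I would package the generators into a group-like series. Rescaling by $q^{\binom{n}{2}}$, set in $\Hcl[[z]]$
\[
 \wh E(z):=\sum_{n\ge0}\wh e_n z^n,\quad \wh e_n:=q^{\binom{n}{2}}[I_{(1^n)}],
 \qquad h_m:=\sum_{\abs{\mu}=m}[I_\mu].
\]
Using the identity $\binom{n}{2}=\binom{r}{2}+\binom{n-r}{2}+r(n-r)$, the coproduct formula of Theorem \ref{thm:Hall-Jordan}(3) turns into $\Delta(\wh e_n)=\sum_{r=0}^n\wh e_r\otimes\wh e_{n-r}$, i.e. $\wh E(z)$ is group-like, with $\epsilon(\wh E(z))=1$. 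As its constant term $\wh e_0=[0]$ is the unit, $\wh E(z)$ is invertible in $\Hcl[[z]]$, and the antipode relation forces $S(\wh E(z))=\wh E(z)^{-1}$. Reading off the coefficient of $z^n$, the claim $S([I_{(1^n)}])=(-1)^n q^{-\binom{n}{2}}h_n$ becomes equivalent to the single statement $\wh E(z)^{-1}=\sum_{m\ge0}(-1)^m h_m z^m$, i.e.
\[
 \sum_{r=0}^n(-1)^r\,\wh e_r * h_{n-r}=0\qquad(n\ge1).
\]
The whole point of the rescaling is that the twist $q^{-r(n-r)}$ of the raw recursion has been absorbed, leaving this clean $e$--$h$ type identity.

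Next I would expand each $\wh e_r * h_{n-r}$ by commutativity and the Pieri rule. By Corollary \ref{cor:Pieri}, $[I_\mu]*[I_{(1^r)}]=\sum_\nu g^\nu_{\mu,(1^r)}[I_\nu]$ with $g^\nu_{\mu,(1^r)}\ne0$ exactly when $\nu\supset\mu$ and $\nu/\mu$ is a vertical strip of size $r$, and then $g^\nu_{\mu,(1^r)}=q^{\,n(\nu)-n(\mu)-\binom{r}{2}}\prod_{i}\bnm{\nu'_i-\nu'_{i+1}}{\nu'_i-\mu'_i}_{1/q}$. The factor $q^{\binom r2}$ in $\wh e_r$ cancels the $q^{-\binom r2}$ here, so the coefficient of a fixed $[I_\nu]$ with $\abs{\nu}=n$ in the displayed sum equals
\[
 q^{n(\nu)}\sum_{\substack{\mu\subset\nu\\ \nu/\mu\ \text{vert.\ strip}}}(-1)^{\abs{\nu/\mu}}\,q^{-n(\mu)}
 \prod_{i\ge1}\bnm{\nu'_i-\nu'_{i+1}}{\nu'_i-\mu'_i}_{1/q}.
\]

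The key observation is that this sum factorizes over the columns of $\nu$. The vertical-strip condition is precisely the interlacing $\nu'_i\ge\mu'_i\ge\nu'_{i+1}$, and these are independent across $i$ (consecutive inequalities give $\mu'_i\ge\nu'_{i+1}\ge\mu'_{i+1}$, so $\mu'$ is automatically a partition). Writing $\delta_i:=\nu'_i-\mu'_i\in\{0,\dots,m_i\}$ with $m_i:=\nu'_i-\nu'_{i+1}$, and using $n(\mu)=\sum_i\binom{\mu'_i}{2}$, the sum splits as $q^{n(\nu)}\prod_{i\ge1}\Phi_i$ with
\[
 \Phi_i:=\sum_{\delta=0}^{m_i}(-1)^{\delta}\,(1/q)^{\binom{\nu'_i-\delta}{2}}\bnm{m_i}{\delta}_{1/q}.
\]
Putting $t=1/q$ and $b:=\nu'_{i+1}$, the finite $q$-binomial theorem $\sum_k\bnm{m}{k}_t t^{\binom{k}{2}}x^k=\prod_{s=0}^{m-1}(1+xt^s)$ (taken at $x=-t^{b}$) evaluates $\Phi_i=(-1)^{m_i}t^{\binom{b}{2}}(t^{b};t)_{m_i}$.

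Finally I would locate a vanishing factor. For the column $i=\nu_1$ one has $\nu'_{\nu_1}\ge1$ and $\nu'_{\nu_1+1}=0$, so $b=0$ and $m_{\nu_1}\ge1$; then $(t^0;t)_{m_{\nu_1}}=\prod_{s=0}^{m_{\nu_1}-1}(1-t^s)$ contains the factor $1-t^0=0$, whence $\Phi_{\nu_1}=0$. Every remaining factor is finite, so the product vanishes for all $\nu\ne\emptyset$ (i.e. $n\ge1$), which gives the required identity and hence the proposition; the base case $n=0$ is the unit $\wh e_0*h_0=[0]$. The main obstacle is exactly the middle step: recognizing that the Pieri coefficients, once multiplied by the $q^{\binom r2}$ normalization, decouple column-by-column into instances of the $q$-binomial theorem. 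I expect the only delicate bookkeeping to be the interlacing description of vertical strips and the identity $n(\mu)=\sum_i\binom{\mu'_i}{2}$; after that the vanishing is immediate from the boundary column $i=\nu_1$.
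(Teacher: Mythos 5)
Your proof is correct, and it reaches exactly the same combinatorial identity as the paper --- the coefficient sum you display is \eqref{eq:S:final} after renaming $(\nu,\mu)\mapsto(\lambda,\nu)$ --- but it both arrives there and disposes of it more cleanly. First, instead of running the induction on $n$ through the antipode recursion \eqref{eq:S:1}, you package the rescaled generators into the group-like series $\wh{E}(z)$ and read off $S(\wh{E}(z))=\wh{E}(z)^{-1}$ in one stroke; this is the same content (inverting a power series versus solving the recursion) but removes the induction bookkeeping, and your rescaling by $q^{\binom{n}{2}}$ is precisely what absorbs the twist $q^{-u(n-u)}$ in Theorem \ref{thm:Hall-Jordan}~(3). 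Second, and more substantially, the paper proves \eqref{eq:S:final} by a case analysis on $\lambda=(r_1^{s_1},\ldots,r_l^{s_l})$, computing $n(\lambda)-n(\nu)$ via \eqref{eq:n(lambda)-n(mu)} and factorizing block-by-block over the distinct part sizes; your interlacing description $\nu'_i\ge\mu'_i\ge\nu'_{i+1}$ of vertical strips factorizes the sum column-by-column with no cases, and Lemma \ref{lem:q-binom} evaluates every factor in closed form as $(-1)^{m_i}t^{\binom{b}{2}}(t^{b};t)_{m_i}$. The vanishing factor you locate at the boundary column $i=\nu_1$ (where $b=0$) is the same factor the paper isolates as the $u_1$-sum over the block of largest parts, so the mechanism is identical; what your organization buys is uniformity --- one formula for all columns, one application of the terminating $q$-binomial theorem, and no separate treatment of the subcase $r_l=1$. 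The supporting checks (the cancellation of $q^{\binom{r}{2}}$ against $q^{-n(1^r)}$ in the Pieri coefficient of Corollary \ref{cor:Pieri}, the identity $n(\mu)=\sum_i\binom{\mu'_i}{2}$, and the independence of the column choices) are all sound.
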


Let us give a proof of this formula.
We will use the well-known 

\begin{lem}[{terminating $q$-binomial theorem}]\label{lem:q-binom}
We have
\[
 (x;q)_n = \sum_{k=0}^n (-x)^k q^{\binom{k}{2}}
           \begin{bmatrix} n \\ k \end{bmatrix}_q.
\]
\end{lem}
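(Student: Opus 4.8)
The plan is to prove the identity by induction on $n$, reducing everything to the $q$-Pascal recurrence for the $q$-binomial coefficients. The base case $n=0$ is immediate: the left-hand side is the empty product $(x;q)_0 = 1$, while on the right only the $k=0$ term survives, giving $(-x)^0 q^{\binom{0}{2}} \begin{bmatrix} 0 \\ 0 \end{bmatrix}_q = 1$.

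For the inductive step I would assume the formula for $n$ and expand $(x;q)_{n+1} = (x;q)_n (1 - x q^n)$. Substituting the inductive hypothesis and distributing the factor $(1 - x q^n)$ yields two sums; after reindexing $k \mapsto k-1$ in the sum originating from the $-x q^n$ term, I collect the coefficient of each power $(-x)^k$ for $0 \le k \le n+1$. The extreme terms $k=0$ and $k=n+1$ match directly using $\binom{n+1}{2} = \binom{n}{2} + n$. For the range $1 \le k \le n$ the coefficient of $(-x)^k$ is
\[
 q^{\binom{k}{2}} \begin{bmatrix} n \\ k \end{bmatrix}_q
 + q^{\binom{k-1}{2}+n} \begin{bmatrix} n \\ k-1 \end{bmatrix}_q,
\]
and the key step is to factor out $q^{\binom{k}{2}}$ and simplify the residual exponent by means of $\binom{k}{2}-\binom{k-1}{2} = k-1$, which rewrites the coefficient as
\[
 q^{\binom{k}{2}}\Bigl(\begin{bmatrix} n \\ k \end{bmatrix}_q
 + q^{n-k+1}\begin{bmatrix} n \\ k-1 \end{bmatrix}_q\Bigr).
\]
Invoking the $q$-Pascal rule $\begin{bmatrix} n+1 \\ k \end{bmatrix}_q = \begin{bmatrix} n \\ k \end{bmatrix}_q + q^{n-k+1}\begin{bmatrix} n \\ k-1 \end{bmatrix}_q$ identifies this with $q^{\binom{k}{2}}\begin{bmatrix} n+1 \\ k \end{bmatrix}_q$, which is exactly the desired coefficient at level $n+1$, completing the induction.

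The only care required is the bookkeeping of the $q$-exponents, namely matching $\binom{k-1}{2}+n$ against $\binom{k}{2}$ via $\binom{k}{2}-\binom{k-1}{2} = k-1$; there is no conceptual obstacle. The $q$-Pascal recurrence itself I would either cite as standard or establish in a single line by clearing denominators in $\begin{bmatrix} n \\ k \end{bmatrix}_q = (q;q)_n / \bigl((q;q)_k (q;q)_{n-k}\bigr)$, using $(q;q)_{n+1} = (q;q)_n(1-q^{n+1})$ and the factorizations $(q;q)_k = (q;q)_{k-1}(1-q^k)$ and $(q;q)_{n-k+1} = (q;q)_{n-k}(1-q^{n-k+1})$.
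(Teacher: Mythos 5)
Your proof is correct and complete. One point of comparison: the paper itself never proves this lemma --- it is stated as ``well-known'' and then invoked in the proofs of Proposition \ref{prp:S} and Lemma \ref{lem:p-e} --- so your argument supplies a step the authors deliberately left to the literature, and it does so by the standard route: induction on $n$ with the inductive step reduced to the $q$-Pascal rule $\bnm{n+1}{k}_q = \bnm{n}{k}_q + q^{\,n-k+1}\bnm{n}{k-1}_q$. The bookkeeping you flag as the only delicate point does check out: $\binom{k}{2}-\binom{k-1}{2}=k-1$ turns the coefficient $q^{\binom{k-1}{2}+n}$ arising from the $-xq^n$ factor into $q^{\binom{k}{2}}q^{\,n-k+1}$, the boundary terms $k=0$ and $k=n+1$ match via $\binom{n+1}{2}=\binom{n}{2}+n$, and the $q$-Pascal rule itself follows by clearing denominators exactly as you indicate, since the numerator collapses through $(1-q^{\,n-k+1})+q^{\,n-k+1}(1-q^{k})=1-q^{\,n+1}$. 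No gap.
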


\begin{proof}
We will show the statement by induction on $n$ 
using the defining property of antipode: 
\begin{equation}\label{eq:antipode}
 m \circ (\id \otimes S)\circ \Delta([I_{(1^n)}]) = 
 i \circ \epsilon([I_{(1^n)}])=0.
\end{equation}
In the case $n=1$,
\eqref{eq:antipode} implies $S([I_{(1)}])+I_{(1)}=0$,
so that the statement holds.

For a general $n$, from 
$\Delta([I_{(1^n)}])
=\sum_{u=0}^n q^{-u(n-u)}[I_{(1^u)}] \otimes [I_{(1^{n-u})}]$
in Theorem \ref{thm:Hall-Jordan} (3) we see
\begin{align*}
 m \circ (\id \otimes S)\circ \Delta([I_{(1^n)}]) 
&=m \circ (\id \otimes S)
 \Bigl(\sum_{u=0}^n q^{-u(n-u)}[I_{(1^u)}] \otimes [I_{(1^{n-u})}]\Bigr)
\\
&= \sum_{u=0}^n q^{-u(n-u)}[I_{(1^u)}] * S([I_{(1^{n-u})}]).
\end{align*}
Thus
\begin{align}\label{eq:S:1}
 S([I_{(1^n)}])=
 -\sum_{u=1}^n q^{-u(n-u)}[I_{(1^u)}] * S([I_{(1^{n-u})}]).
\end{align}
By the Pieri formula in Corollary \ref{cor:Pieri}
and the commutativity of the product $*$, we have
\[
 [I_{(1^u)}]*[I_{\nu}] 
 = \sum_\lambda g^{\lambda}_{(1^u),\nu}[I_\lambda],
 \quad
 g^{\lambda}_{(1^u),\nu} = 
 q^{n(\lambda)-n(\nu)-n(1^u)} \prod_{i\ge1} 
 \begin{bmatrix} \lambda'_i - \lambda'_{i+1} \\ 
                 \lambda'_i - \nu'_i \end{bmatrix}_{1/q}.
\]
Then by the induction hypothesis we proceed with \eqref{eq:S:1} as 
\begin{align*}
-&\sum_{u=1}^n q^{-u(n-u)}[I_{(1^u)}] * S([I_{(1^{n-u})}])
\\
&=-\sum_{u=1}^n q^{-u(n-u)} \sum_{|\nu|=n-u}
   (-1)^{n-u}q^{-\binom{n-u}{2}}[I_{(1^u)}] * [I_{\nu}]
\\
&=-\sum_{u=1}^n q^{-u(n-u)}\sum_{|\nu|=n-u}
   (-1)^{n-u}q^{-\binom{n-u}{2}}
   \sum_{|\lambda|=n}q^{n(\lambda)-n(\nu)-\binom{u}{2}}
   \begin{bmatrix}
    \lambda'_i-\lambda'_{i+1} \\ \lambda'_i-\nu'_i
   \end{bmatrix}_{1/q} 
   [I_{\lambda}]
\\
&=\sum_{u=1}^n(-1)^{n-u+1}\sum_{|\nu|=n-u}
  q^{-n(\nu)-\binom{n}{2}}\sum_{|\lambda|=n}q^{n(\lambda)}
 \begin{bmatrix}
  \lambda'_i-\lambda'_{i+1}\\ \lambda'_i-\nu'_i
 \end{bmatrix}_{1/q} [I_{\lambda}].
\end{align*}
Therefore it is enough to show
\[
 \sum_{u=1}^n(-1)^{n-u+1}\sum_{|\nu|=n-u}
 q^{n(\lambda)-n(\nu)-\binom{n}{2}} \prod_{i \geq 1}
 \begin{bmatrix}
  \lambda'_i - \lambda'_{i+1} \\ \lambda'_i - \nu'_i
 \end{bmatrix}_{1/q}
 =(-1)^n q^{-\binom{n}{2}}.
\]
This is equivalent to 
\begin{align}\label{eq:S:final}
 \sum_{u=0}^n (-1)^u \sum_{|\nu|=n-u} q^{n(\lambda)-n(\nu)}
 \prod_{i \ge 1} 
 \begin{bmatrix}
 \lambda'_i - \lambda'_{i+1} \\ \lambda'_i - \nu'_i
 \end{bmatrix}_{1/q} =0.
\end{align}

%In the case $\lambda=(1^n)$, 
%the summation is over $\nu=(1^{n-u})$ with $0 \le u \le n$, 
%and then we have 
%$n(\lambda)-n(\nu) = \sum_{i\ge1}(i-1)(\lambda_i-\nu_i)
%=\sum_{i=n-u+1}^n (i-1)=n u - \binom{u+1}{2}$.
%so \eqref{eq:S:final} is equivalent to 
%\[
% \sum_{u=0}^n (-1)^u q^{n u - \binom{u+1}{2}}
% \begin{bmatrix} n \\ u \end{bmatrix}_{1/q}=0
% \iff
% \sum_{u=0}^n (-1)^u q^{\binom{u}{2}}
% \begin{bmatrix} n \\ u \end{bmatrix}_{q}=0.
%\]
%The final equality is obtained from the $q$-binomial formula 
%(Lemma \ref{lem:q-binom}) with $x=1$.

In the case $\lambda=(r^s)$, 
the summation is over $\nu=(r^{s-u},(r-1)^u)$ with $0 \le u \le s$,
and then we have 
$n(\lambda)-n(\nu) = \sum_{i\ge1}(i-1)(\lambda_i-\nu_i)
=\sum_{i=s-u+1}^s(i-1)=s u - \binom{u+1}{2}$.
Thus \eqref{eq:S:final} is equivalent to 
\[
 \sum_{u=0}^s (-1)^u q^{s u - \binom{u+1}{2}}
 \begin{bmatrix} s \\ u \end{bmatrix}_{1/q}=0
 \iff
 \sum_{u=0}^s (-1)^u q^{\binom{u}{2}}
 \begin{bmatrix} s \\ u \end{bmatrix}_q=0.
\]
Since the $q$-binomial formula yields
$\sum_{u=0}^s (-1)^u q^{\binom{u}{2}} \bnm{s}{u}_q = (1;q)_s = 0$,
we are done.

In the general case 
$\lambda=(r_1^{s_1},r_2^{s_2},\ldots,r_l^{s_l})$,
the summation is over 
\[
 \nu=\left(r_1^{s_1-u_1},(r_1-1)^{u_1}, r_2^{s_2-u_2},(r_2-1)^{u_2},
   \ldots, r_l^{s_l-u_l},(r_l-1)^{u_l}\right)
\]
with $0 \le u_j \le s_j$ ($j=1,\ldots,l$). 
We then have 
\begin{equation}\label{eq:n(lambda)-n(mu)}
\begin{split}
  n(\lambda)-n(\nu)
&=\sum_{i\ge1}(i-1)(\lambda_i-\nu_i)
 =\sum_{j=1}^l \sum_{i=s_j-u_j+1}^{s_j}(s_1+\cdots+s_{j-1}+i-1)
\\
&=\sum_{j=1}^l 
  \Bigl(u_j(s_1+\cdots+s_{j-1}) + s_j u_j-\binom{u_j+1}{2}\Bigr).
\end{split}
\end{equation}
So the left hand side of \eqref{eq:S:final} is equal to
\begin{align*}
 &\sum_{(u_1,\ldots,u_l)} (-1)^{u_1+\ldots+u_l}
  \prod_{j=1}^l 
  q^{u_j(s_1+\cdots+s_{j-1})+s_j u_j-\binom{u_j+1}{2}}
  \begin{bmatrix} s_j \\ u_j \end{bmatrix}_{1/q}
\\
=&\sum_{(u_1,\ldots,u_l)} (-1)^{u_1+\ldots+u_l}
  \prod_{j=1}^l 
  q^{u_j(s_1+\cdots+s_{j-1})+\binom{u_j}{2}}
\begin{bmatrix} s_j \\ u_j \end{bmatrix}_q
\\
=&\Bigl(\sum_{u_1=0}^{s_1} (-1)^{u_1} q^{\binom{u_1}{2}} 
        \begin{bmatrix} s_1 \\ u_1 \end{bmatrix}_q \Bigr)
  \Bigl(\sum_{u_2=0}^{s_2} (-q^{s_1})^{u_2} q^{\binom{u_2}{2}} 
        \begin{bmatrix} s_2 \\ u_2 \end{bmatrix}_q \Bigr)
  \cdots
  \Bigl(\sum_{u_l=0}^{s_l} 
        (-q^{s_1+\cdots+s_{l-1}})^{u_l} q^{\binom{u_l}{2}} 
        \begin{bmatrix} s_l \\ u_l \end{bmatrix}_q \Bigr).
\end{align*}
Then the summation over $u_1$ is zero 
by the $q$-binomial formula,
so \eqref{eq:S:final} holds.
\end{proof}

%%%%%%%%%%%%%%%%%%%%%%%%%%%%%%%%%%%%%%%%%%%%%%%%%%%%%%%%%%%%%%%%%%%%%%
\section{Hall-Littlewood symmetric function via classical Hall algebra}
\label{sec:HL}

We continue to study the classical Hall algebra $\Hcl$,
i.e., the Ringel-Hall algebra of the category 
$\catA=\Rep^{\nil}_{\bbF_q} Q$ of the Jordan quiver.
In this section we analyze symmetric functions via $\Hcl$.

%%%%%%%%%%%%%%%%%%%%%%%%%%%%%%%%%%%%%%%%%%%%%%%%%%%%%%%%%%%%%%%%%%%%%%
\subsection{Hopf algebra of symmetric functions}
\label{ss:Lambda}

We follow \cite[Chap.I]{M} for the notations of symmetric functions.
In particular, the ring of symmetric function with variables 
$x=(x_1,x_2,\ldots)$ is denoted by $\Lambda(x)$,
which is defined as the limit of the projective system
of the rings of symmetric polynomials 
$\Lambda_n(x) := \bbC[x_1,\ldots,x_n]^{S_n}$
together with the projection maps 
\[
 \Lambda_{n+1}(x) \longsurj \Lambda_n(x),
 \quad
 x_i \longmapsto 
 \begin{cases}x_i & (1 \le i \le n) \\ 0 & (i=n+1)\end{cases}.
\]
We sometimes denote $\Lambda_n:=\Lambda_n(x)$ and $\Lambda:=\Lambda(x)$ 
if no confusion may occur.
The grading given by the polynomial degree is denoted as 
$\Lambda_n(x)=\oplus_{d \in \bbN} \Lambda_n^d(x)$ and  
$\Lambda(x)=\oplus_{d \in \bbN} \Lambda^d(x)$.

The elementary symmetric polynomial of $n$ variables is denoted by 
\[
 e_r(x_1,\ldots,x_n) = 
 \sum_{1 \le i_1 < \cdots < i_r \le n}
 x_{i_1} \cdots x_{i_r} \ 
 \in \Lambda^r_n(x).
\]
The family $\{e_r(x_1,\ldots,x_n) \mid n \in \bbN \}$ 
determines the elementary symmetric function 
$e_r(x) \in \Lambda^r(x)$.
For a partition $\lambda=(\lambda_1,\lambda_2,\ldots)$,
we define $e_\lambda(x) \in \Lambda^{|\lambda|}(x)$ by 
\[
 e_\lambda(x) := e_{\lambda_1}(x)e_{\lambda_2}(x)\cdots. 
\]
Then $\Lambda(x)$ has a basis $\{e_\lambda(x) \mid \tpar\}$
and as a ring we have $\Lambda(x) = \bbC[e_1(x),e_2(x),\ldots]$.

%
%\[
% h_r(x_1,\ldots,x_n) = 
% \sum_{1 \le i_1 \le \cdots \le i_r \le n}
% x_{i_1} \cdots x_{i_r} \  \in \Lambda^r_n(x)
%\]
%for $\lambda=(\lambda_1,\lambda_2,\ldots)$ set
%$h_\lambda := h_{\lambda_1}h_{\lambda_2}\cdots \in \Lambda^{|\lambda|}$
%

Let us recall the well-known fact due to Zelevinsky \cite{Z}.
See also \cite[Chap.I, I\!I\!I]{M} for an account.

\begin{fct*}%\label{thm:Lambda}
$\Lambda(x)$ is a Hopf algebra with the coproduct $\Delta$ given by 
\[
 \Delta(e_n(x)) := \sum_{r=0}^{n} e_r(x) \otimes e_{n-r}(y)  
 \in \Lambda(x) \otimes \Lambda(y).
\]
$\Lambda(x)$ also has a Hopf pairing.
Using $m_i(\lambda) = \{j \mid \lambda_j = i\}$,
the pairing is  given by 
\[
 \pair{e_\lambda(x)}{e_\mu(x)} :=
 \delta_{\lambda,\mu}\prod_{i\ge1} (q;q)_{m_i(\lambda)}^{-1}.
\]
\end{fct*}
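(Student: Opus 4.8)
The statement has two independent halves, and the plan is to treat them in turn. For the coalgebra/Hopf-algebra assertion I would use that $\Lambda(x) = \bbC[e_1(x), e_2(x), \ldots]$ is the free commutative $\bbC$-algebra on the $e_n$. Consequently there is a unique algebra homomorphism $\Delta$ taking the prescribed values on the generators, and a unique algebra homomorphism $\epsilon$ with $\epsilon(e_n) = \delta_{n,0}$. It is cleanest to package the coproduct by the generating series $E(t) := \sum_{n \ge 0} e_n(x)\, t^n$: the formula $\Delta(e_n) = \sum_{r=0}^n e_r \otimes e_{n-r}$ says exactly that $E(t)$ is grouplike, i.e. $\Delta(E(t)) = E(t) \otimes E(t)$ and $\epsilon(E(t)) = 1$. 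Coassociativity and the counit axioms then reduce to checks on the generators, where both $(\Delta \otimes \id)\Delta(e_n)$ and $(\id \otimes \Delta)\Delta(e_n)$ equal $\sum_{i+j+k=n} e_i \otimes e_j \otimes e_k$; since the maps in question are algebra homomorphisms, agreement on generators suffices. Finally, $\Lambda(x)$ is connected graded for the polynomial degree with degree-zero part $\bbC$, so this bialgebra automatically carries an antipode and is a Hopf algebra.

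For the Hopf pairing I would first note that $\pair{\cdot}{\cdot}$ is symmetric and nondegenerate, being diagonal with nonzero entries in the basis $\{e_\lambda\}$. Because $\Delta$ is an algebra map and $\Lambda(x)$ is commutative, the compatibility $\pair{fg}{h} = \sum \pair{f}{h_{(1)}}\pair{g}{h_{(2)}}$ is multiplicative in each argument, so it is enough to verify it on an algebra generating family. Rather than wrestle with the $e_n$ directly, the transparent route is to pass to the primitive family of power sums $p_n(x)$: one shows each $p_n$ is primitive for $\Delta$ and that the pairing is diagonal in the $p$-basis, $\pair{p_\lambda}{p_\mu} = \delta_{\lambda,\mu}\, z_\lambda$ for the appropriate $q$-normalization $z_\lambda$. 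For primitive generators the axiom trivializes: $\pair{p_a p_b}{p_n} = \pair{p_a}{p_n}\pair{p_b}{1} + \pair{p_a}{1}\pair{p_b}{p_n} = 0$, which matches $\pair{p_a p_b}{p_n} = 0$ by orthogonality, and the unit--counit condition $\pair{1}{h} = \epsilon(h)$ is immediate from the grading. Multiplicativity then propagates compatibility to all of $\Lambda(x)$.

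The main obstacle is the bookkeeping that connects the primitive description to the elementary basis in which the pairing is stated: one must expand $e_\lambda$ in the power sums via the $q$-weighted Newton identities and verify that the resulting Gram matrix is diagonal with the claimed entries $\prod_{i \ge 1}(q;q)_{m_i(\lambda)}^{-1}$. The collapse of these sums rests on the terminating $q$-binomial theorem (Lemma \ref{lem:q-binom}) together with the elementary identity $\binom{n}{2} = \binom{r}{2} + \binom{n-r}{2} + r(n-r)$, which is precisely what keeps the normalizing powers of $q$ consistent across the change of basis. Tracking these $q$-factors is the delicate point and the place where the conventions must be pinned down carefully; the Hopf-algebra axioms of the first half, by contrast, are purely formal and carry no such subtlety.
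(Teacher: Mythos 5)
The paper offers no proof of this Fact --- it is quoted from Zelevinsky \cite{Z} and Macdonald \cite{M} --- so there is no internal argument to compare yours against; your proposal has to stand on its own. The first half (freeness of $\Lambda(x)$ on the $e_n$, extension of $\Delta$ and $\epsilon$ as algebra maps, coassociativity on generators, antipode from connected gradedness) is the standard argument and is fine. The genuine gap is in the second half, at precisely the step you set aside as ``bookkeeping'': the Gram matrix of the $e_\lambda$ is \emph{not} diagonal, so the verification you defer cannot be completed. Indeed, for any bilinear form satisfying the Hopf-pairing axiom $\pair{x*y}{z}=\sum\pair{x}{z_{(1)}}\pair{y}{z_{(2)}}$ together with $\Delta(e_2)=1\otimes e_2+e_1\otimes e_1+e_2\otimes 1$ and the degree grading (which forces $\pair{e_1}{1}=0$), one has
\[
 \pair{e_{(1,1)}}{e_{(2)}}=\pair{e_1 e_1}{e_2}=\pair{e_1}{e_1}^2,
\]
which is nonzero whenever $\pair{e_1}{e_1}=(q;q)_1^{-1}\neq 0$, whereas the displayed formula asserts $\pair{e_{(1,1)}}{e_{(2)}}=0$. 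So products $e_\lambda$ with $\ell(\lambda)>1$ can never be orthogonal for a Hopf pairing of this kind, and the identity you propose to verify by $q$-weighted Newton identities is false. (The printed norms $\prod_{i\ge1}(q;q)_{m_i(\lambda)}^{-1}$ are in fact the norms of the Hall--Littlewood basis $P_\lambda$ --- compare Lemma \ref{lem:I_lambda:pair} with Theorem \ref{thm:Hall=Lambda} --- so the Fact has evidently conflated the elementary and Hall--Littlewood bases; a blind but careful execution of your own plan would have exposed this.)

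Your overall strategy does prove the corrected statement, and you should say so explicitly: a Hopf pairing compatible with this $\Delta$ is completely determined by its values on the algebra generators, hence by the scalars $\pair{e_m}{e_n}=\delta_{m,n}c_n$, and the choice consistent with the rest of the paper is $c_n=(q^{-1};q^{-1})_n^{-1}$, equivalently $\pair{p_m}{p_n}=\delta_{m,n}\,n/(1-q^{-n})$ in the primitive normalization you introduce. With that as the definition, your argument via primitivity of the $p_n$ goes through, with one small caveat: checking $\pair{p_a p_b}{p_n}$ against a single generator $p_n$ in the second slot does not suffice; the second argument must range over all monomials $p_\nu$, using $\Delta(p_\nu)=\prod_i(p_{\nu_i}\otimes1+1\otimes p_{\nu_i})$, which is the same routine computation. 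As written, however, your proof stalls at the final change of basis, and it should: the identity you are trying to establish there does not hold.
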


Comparing this fact and Theorem \ref{thm:Hall-Jordan},
we immediately have 

\begin{thm}\label{thm:Hall=Lambda}
There is a Hopf algebra isomorphism
\[
 \psi: \Hcl \longsimto \Lambda(x),
 \quad [I_{(1^n)}] \longmapsto q^{-\binom{n}{2}} e_n(x).
\]
For a homogeneous element 
$a \in \Hcl$ with degree $n \in \bbZ = K_0(\catA)$,
we have
\[
 \pair{\psi(a)}{\psi(a)} = \pair{a}{a} \cdot q^n.
\]
\end{thm}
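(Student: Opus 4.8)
The plan is to establish the two assertions separately. For the isomorphism, I would proceed by comparing the two Hopf-algebra presentations side by side. Define the linear map $\psi$ on algebra generators by $[I_{(1^n)}] \mapsto q^{-\binom{n}{2}} e_n(x)$. Since Theorem \ref{thm:Hall-Jordan}(2) gives $\Hcl \simeq \bbC[[I_{(1)}],[I_{(1^2)}],\ldots]$ and the cited Fact gives $\Lambda(x) = \bbC[e_1(x),e_2(x),\ldots]$, both are free commutative algebras on their respective generators, so $\psi$ extends uniquely to a $\bbC$-algebra isomorphism. The only substantive point is that $\psi$ respects the coproducts. I would verify this on the generators: apply $\Delta$ to $[I_{(1^n)}]$ using Theorem \ref{thm:Hall-Jordan}(3), namely $\Delta([I_{(1^n)}]) = \sum_{r=0}^n q^{-r(n-r)} [I_{(1^r)}] \otimes [I_{(1^{n-r})}]$, push it through $\psi \otimes \psi$, and check the result equals $\Delta(\psi([I_{(1^n)}])) = q^{-\binom{n}{2}} \sum_{r=0}^n e_r(x) \otimes e_{n-r}(y)$. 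The bookkeeping reduces to the elementary exponent identity $\binom{r}{2} + \binom{n-r}{2} + r(n-r) = \binom{n}{2}$, which makes the two sides agree termwise. Compatibility with counits is immediate since both send positive-degree generators to $0$.

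For the pairing identity, the key observation is that both $\pair{\cdot}{\cdot}$ on $\Hcl$ and on $\Lambda$ are Hopf pairings, and a Hopf pairing is determined by its values on a set of algebra generators together with the coproduct (by the defining adjunction $\pair{x*y}{z}=\sum\pair{x}{z_1}\pair{y}{z_2}$). So it suffices to prove $\pair{\psi(a)}{\psi(a)} = q^n \pair{a}{a}$ for $a = [I_{(1^n)}]$ and then argue that the scaling factor $q^n = q^{\deg}$ propagates correctly through products. Concretely, for the generator I would compute the left side from the given pairing $\pair{e_\lambda(x)}{e_\mu(x)} = \delta_{\lambda,\mu}\prod_i (q;q)^{-1}_{m_i(\lambda)}$, which on $e_n(x)$ (with $m_n = 1$) yields $(q;q)_1^{-1} = (1-q)^{-1}$; after the prefactor $q^{-2\binom{n}{2}}$ coming from $\psi$ this gives $\pair{\psi([I_{(1^n)}])}{\psi([I_{(1^n)}])} = q^{-n(n-1)}(1-q)^{-1}$. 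The right side uses Lemma \ref{lem:I_lambda:pair}: for $\lambda = (1^n)$ we have $n(\lambda)=0$, $m_1 = n$, so $\pair{[I_{(1^n)}]}{[I_{(1^n)}]} = q^{-n}(q^{-1};q^{-1})_n^{-1}$; multiplying by $q^n$ and simplifying $(q^{-1};q^{-1})_n$ should match.

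The hard part will be handling the multiplicativity of the pairing scaling cleanly, since $\Hcl$ is only a \emph{twisted} bialgebra \eqref{eq:*:tensor:twist} and $\psi$ must intertwine the twists; however, by the third Lemma of \S\ref{subsec:Jordan} the Euler form vanishes identically on $\catA$, so the twist \eqref{eq:*:tensor:twist} is trivial here and this obstacle evaporates — the pairing is genuinely multiplicative in the sense $\pair{x*y}{z}=\sum\pair{x}{z_1}\pair{y}{z_2}$ with untwisted coproduct. I would therefore reduce the general homogeneous $a$ to a monomial in the generators $[I_{(1^{n_i})}]$ by expressing it in the basis and using that both sides of the claimed identity are $q^{\deg}$-homogeneous and bilinear-compatible, so the constant $q^n$ depends only on the total degree $n = |a|$ and not on the particular monomial. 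The genuine computational content is thus confined to the single-generator check above, plus the degree-additivity argument that lets the factor $q^{\deg}$ pass through $*$.
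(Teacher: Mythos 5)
Your overall route --- extend $\psi$ multiplicatively using that both sides are free commutative algebras on the generators, verify the coproduct on $[I_{(1^n)}]$ via the exponent identity $\binom{r}{2}+\binom{n-r}{2}+r(n-r)=\binom{n}{2}$, and reduce the pairing statement to generators using the Hopf-pairing adjunction together with the vanishing of the Euler form --- is exactly the comparison the paper intends (the paper records no details beyond ``comparing this fact and Theorem \ref{thm:Hall-Jordan} we immediately have''). The coproduct check and the reduction-to-generators argument are sound; for the latter it is worth noting explicitly that $(a,b)\mapsto q^{\deg a}\pair{a}{b}$ is again a Hopf pairing because $\pair{\cdot}{\cdot}$ vanishes off the diagonal of the $K_0$-grading, so two Hopf pairings agreeing on generator pairs agree everywhere.

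The gap is in the one computation you actually have to carry out, the generator pairing check. First, $n((1^n))=\sum_{i=1}^n(i-1)=\binom{n}{2}$, not $0$ --- you have computed $n(\lambda)$ for $(n)$ rather than $(1^n)$. Lemma \ref{lem:I_lambda:pair} then gives $a_{(1^n)}=q^{n+2\binom{n}{2}}(q^{-1};q^{-1})_n=(-1)^nq^{\binom{n}{2}}(q;q)_n$ (the value the paper itself uses in \S\ref{subsec:Hcl:pair}), so that $q^n\pair{[I_{(1^n)}]}{[I_{(1^n)}]}=q^{-2\binom{n}{2}}(q^{-1};q^{-1})_n^{-1}$, not $(q^{-1};q^{-1})_n^{-1}$ as in your sketch. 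Second, you end with ``should match'' without matching: with your value $\pair{e_n(x)}{e_n(x)}=(1-q)^{-1}$ the two sides are $q^{-2\binom{n}{2}}(1-q)^{-1}$ and $q^{-2\binom{n}{2}}(q^{-1};q^{-1})_n^{-1}$, which differ for every $n\ge 2$ (and already for $n=1$ with your normalization). The check only closes if the pairing on $\Lambda$ is taken to be the Hall-Littlewood pairing at $t=q^{-1}$, characterized by $\pair{p_m(x)}{p_n(x)}=\delta_{m,n}\,n/(1-q^{-n})$, under which $e_n(x)=P_{(1^n)}(x;q^{-1})$ has norm $\prod_{i=1}^n(1-q^{-i})^{-1}=(q^{-1};q^{-1})_n^{-1}$; that is, the multiplicity entering the displayed formula of the cited Fact must be read as $m_1((1^n))=n$ with base $q^{-1}$, not $m_n((n))=1$ with base $q$. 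With that reading both sides equal $q^{-2\binom{n}{2}}(q^{-1};q^{-1})_n^{-1}$ and your propagation argument finishes the proof; as written, your verification would terminate in a mismatch rather than a proof.
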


%%%%%%%%%%%%%%%%%%%%%%%%%%%%%%%%%%%%%%%%%%%%%%%%%%%%%%%%%%%%%%%%%%%%%%
\subsection{Hall-Littlewood element}

Since $\Hcl$ has an orthogonal basis 
$\{[I_\lambda] \mid \lambda: \tpar\}$ 
with respect to the Hopf pairing $\pair{\cdot}{\cdot}$,
it is natural to name this basis.

\begin{dfn}\label{dfn:P_lambda}
For a partition $\lambda$, 
define $P_\lambda \in \Hcl$ by 
\[
 P_{\lambda} := q^{n(\lambda)} [I_{\lambda}]
\]
with $n(\lambda)=\sum_{i\ge1}(i-1)\lambda_i$.
We call $P_\lambda$ the Hall-Littlewood element.
For $n \in \bbN$, define $e_n \in \Hcl$ by
\[
 e_n := P_{(1^n)} = q^{\binom{n}{2}}[I_{(1^n)}],
\]
and for a partition $\lambda=(\lambda_1,\lambda_2,\ldots)$
we set $e_\lambda := e_{\lambda_1}*e_{\lambda_2}*\cdots$.
\end{dfn}

\begin{rmk*}
Under the isomorphism $\psi$ in Theorem \ref{thm:Hall=Lambda},
$P_\lambda$ corresponds to the Hall-Littlewood symmetric function 
$P_\lambda(x;q^{-1}) \in \Lambda(x)$.
and $e_n$ corresponds to the elementary symmetric function $e_n(x)$.
See \cite[Chap.I\!I\!I]{M} for the account of $P_\lambda(x;t)$.
\end{rmk*}

%%%%%%%%%%%%%%%%%%%%%%%%%%%%%%%%%%%%%%%%%%%%%%%%%%%%%%%%%%%%%%%%%%%%%%
\subsection{Primitive elements of classical Hall algebra}

An element $p$ of a coalgebra is called primitive if
$\Delta(p)=p\otimes 1 + 1 \otimes p$.
We determine the primitive elements of 
the classical Hall algebra $\Hcl$.

\begin{dfn}\label{dfn:p_n}
Set $p_0 := 1 \in \Hcl$,
and for $n \in \bbZ_{>0}$ define $p_n \in \Hcl$ by 
\[
 p_n := \sum_{|\lambda|=n} 
 (q;q)_{\ell(\lambda)-1}\cdot [I_\lambda]. 
\]
\end{dfn}

Recall that $\Hcl$ has a grading by $K_0(\catA) = \bbZ$.

\begin{thm}\label{thm:prim}
For $n \in \bbZ_{>0}$,  
$p_n$ is a primitive element, i.e.,
$\Delta(p_n) = p_n \otimes 1 + 1 \otimes p_n$.
Conversely, 
homogeneous primitive elements of $\Hcl$
are equal to $p_n$ up to scalar multiplication.
\end{thm}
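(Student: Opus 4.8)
The plan is to treat the two assertions separately, in both cases exploiting the nondegeneracy of Green's Hopf pairing. First I would record the general criterion that, for a homogeneous $x \in \Hcl$ of positive degree, $x$ is primitive if and only if $\pair{x}{u * v} = 0$ for all $u, v$ in the augmentation ideal $\Hcl_+ := \bigoplus_{n \ge 1}\Hcl_n$. Indeed, since $\Delta$ respects the $K_0(\catA) = \bbZ$-grading and $\Hcl$ is connected ($\Hcl_0 = \bbC[0]$), the counit axioms force the interior part $w := \Delta(x) - x \otimes 1 - 1 \otimes x$ to lie in $\Hcl_+ \otimes \Hcl_+$. Using the Hopf pairing identity $\pair{x}{u * v} = \pair{\Delta x}{u \otimes v}$ together with $\pair{1}{v} = 0$ for $v \in \Hcl_+$, one gets $\pair{w}{u \otimes v} = \pair{x}{u * v}$ for all $u,v \in \Hcl_+$; since the pairing $\pair{[I_\lambda]}{[I_\mu]} = \delta_{\lambda,\mu} a_\lambda^{-1}$ is diagonal with nonzero entries, it is nondegenerate on each graded piece, so $w$ vanishes exactly when $\pair{x}{u * v} = 0$ for all such $u,v$. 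This simultaneously reduces the forward statement to a concrete vanishing and, for the converse, identifies $\mathrm{Prim}_n$, the primitives in $\Hcl_n$, with the orthogonal complement of $(\Hcl_+)^2 \cap \Hcl_n$ inside $\Hcl_n$.

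For the forward direction I would verify $\pair{p_n}{u*v} = 0$ on a spanning set of $(\Hcl_+)^2 \cap \Hcl_n$. By Theorem \ref{thm:Hall-Jordan}(2), $\Hcl = \bbC[[I_{(1)}],[I_{(1^2)}],\dots]$, so $(\Hcl_+)^2 \cap \Hcl_n$ is spanned by the products $[I_{(1^a)}] * [I_\beta]$ with $1 \le a \le n-1$ and $\abs{\beta} = n-a$. Using $\pair{p_n}{[I_\lambda]} = (q;q)_{\ell(\lambda)-1} a_\lambda^{-1}$ for $\abs{\lambda}=n$ (and $0$ otherwise), the required identity becomes
\[
 \sum_{\abs{\lambda}=n} g^{\lambda}_{(1^a),\beta}\,(q;q)_{\ell(\lambda)-1}\, a_\lambda^{-1} = 0 \qquad (a \ge 1,\ \beta \ne \emptyset).
\]
The crucial simplification here is that the structure constant is multiplication by a single column: by commutativity (Theorem \ref{thm:Hall-Jordan}(1)) and the Pieri formula (Corollary \ref{cor:Pieri}) the only $\lambda$ contributing are those with $\lambda - \beta$ a vertical strip of size $a$, and $g^{\lambda}_{(1^a),\beta}$ is then an explicit product of $q$-binomial coefficients.

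The main obstacle is precisely this $q$-binomial identity. I expect it to yield to the same mechanism as the antipode computation in Proposition \ref{prp:S}: after inserting the explicit formula for $a_\lambda$ (Lemma \ref{lem:I_lambda:pair}) and for $g^{\lambda}_{(1^a),\beta}$, one groups the admissible $\lambda$ according to how the added vertical strip meets the constant ``blocks'' of $\beta$, factorizing the sum into a product of one-variable $q$-binomial sums each of which collapses to $0$ via the terminating $q$-binomial theorem (Lemma \ref{lem:q-binom}) in the degenerate form $(1;q)_s = 0$. Getting this factorization and the bookkeeping of the powers of $q$ right is the one genuinely computational step. As an alternative route one could instead observe that $e(t) := \sum_n e_n t^n$ is group-like, since $\Delta(e_n) = \sum_{r=0}^n e_r \otimes e_{n-r}$ follows at once from Theorem \ref{thm:Hall-Jordan}(3); then the coefficients of $\log e(t)$ are primitive, and one is reduced to matching $p_n$ with these logarithmic-derivative elements, which is an equivalent identity among the $[I_\lambda]$.

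Finally, the converse follows from the criterion above by a dimension count that does not use the explicit form of $p_n$. Nondegeneracy of the pairing gives $\dim \mathrm{Prim}_n = \dim \Hcl_n - \dim\bigl((\Hcl_+)^2 \cap \Hcl_n\bigr)$. Since $\Hcl$ is the polynomial algebra $\bbC[[I_{(1)}],[I_{(1^2)}],\dots]$ with one generator in each degree, the space of indecomposables $\Hcl_+/(\Hcl_+)^2$ is one-dimensional in every positive degree, so $\dim\bigl((\Hcl_+)^2 \cap \Hcl_n\bigr) = \dim \Hcl_n - 1$ and hence $\dim \mathrm{Prim}_n = 1$. As the forward direction exhibits $p_n$ as a nonzero primitive of degree $n$, it spans $\mathrm{Prim}_n$, and every homogeneous primitive element is therefore a scalar multiple of $p_n$.
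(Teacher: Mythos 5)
Your reduction of primitivity to the orthogonality condition $\pair{x}{u*v}=0$ for $u,v$ in the augmentation ideal is correct: the Green coproduct of $[I_\lambda]$ visibly has $1\otimes[I_\lambda]+[I_\lambda]\otimes 1$ as its degree-$(0,n)$ and $(n,0)$ part, the pairing is graded-diagonal with nonzero entries, and the Hopf-pairing identity transfers $\Delta(x)-x\otimes1-1\otimes x$ to the pairing against $(\Hcl_+)^2$. Granting the forward direction, your converse via $\dim \mathrm{Prim}_n=\dim\Hcl_n-\dim\bigl((\Hcl_+)^2\cap\Hcl_n\bigr)=1$ is valid and is genuinely different from (and slicker than) the paper's argument, which instead expands a primitive element in the basis $p_\lambda$ of Lemma \ref{lem:p:basis} and inspects the cross terms of $\Delta(p_\lambda)$.

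The gap is in the forward direction. The identity you must prove,
\begin{equation*}
 \sum_{\abs{\lambda}=n} g^{\lambda}_{(1^a),\beta}\,(q;q)_{\ell(\lambda)-1}\,a_\lambda^{-1}=0
 \qquad (a\ge1,\ \beta\neq\emptyset),
\end{equation*}
is true, but the mechanism you propose for it does not apply. In Proposition \ref{prp:S} and in the paper's Lemma \ref{lem:p-e} (identities \eqref{eq:S:final} and \eqref{eq:p-e:2}), the sum runs over vertical strips of \emph{all} sizes $u$ (with alternating sign or with the extra sum over $\nu$), so the choices $u_1,\ldots,u_l$ made in the separate constant blocks are independent and the multiple sum factors into one-variable $q$-binomial sums, the first of which vanishes by $(1;q)_{s_1}=0$. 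In your identity the strip size $a$ is \emph{fixed}, so the block contributions are coupled by the constraint $u_1+\cdots+u_{l+1}=a$ and no such factorization exists; already for $n=3$, $a=1$, $\beta=(2)$ the vanishing comes from a cancellation between the two partitions $\lambda=(3)$ and $\lambda=(2,1)$, which have different block structures, not from a product of vanishing factors. So the one "genuinely computational step" you flag is not merely unfinished bookkeeping: it needs a different argument (e.g.\ a generating function in the strip size, or an induction on $\ell(\beta)$). The paper avoids this by taking your alternative route --- $E(z)$ is group-like, so the coefficients of $-\log E(z)$ are primitive --- and the entire computational weight then sits in the Newton-type identity $\sum_{r=0}^{n-1}(-1)^r p_{n-r}*e_r=(-1)^{n-1}n\,e_n$ of Lemma \ref{lem:p-e}, whose alternating sum over $r$ is exactly what restores the factorization. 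Either way, this identity must actually be proved, and your proposal does not yet contain a proof of it.
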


The proof of the following lemma will be postponed.

\begin{lem}\label{lem:p-e}
Let $\exp(x) := \sum_{n\ge0} (x* \cdots * x)/n!$
for $x \in \Hcl$.
Then the following equality holds in $\Hcl[[z]]$.
\[
 \sum_{n \geq 0}e_n (-z)^n 
 = \exp\Bigl(-\sum_{n \geq 1}\frac{1}{n}p_n z^n\Bigr).
\]
\end{lem}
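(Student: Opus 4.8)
The plan is to prove the generating-function identity
\[
 \sum_{n \geq 0}e_n (-z)^n
 = \exp\Bigl(-\sum_{n \geq 1}\tfrac{1}{n}p_n z^n\Bigr)
\]
by recognizing it as the classical logarithmic relation between elementary and power-sum symmetric functions, transported into $\Hcl$ via the definitions of $e_n$ and $p_n$. Since both sides live in $\Hcl[[z]]$ and $\Hcl$ is commutative, the exponential is unambiguous. I would take the logarithmic derivative of both sides with respect to $z$, which converts the identity into a linear recurrence (a Newton-type identity) relating $p_n$ to the $e_k$; this avoids manipulating the full exponential directly.

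\textbf{Reducing to a Newton identity.}
Write $E(z) := \sum_{n\ge0} e_n(-z)^n$. Applying $z\,\tfrac{d}{dz}\log$ to both sides, the target identity becomes equivalent to
\[
 z\,\frac{E'(z)}{E(z)} = -\sum_{n\ge1} p_n z^n,
\]
i.e.\ to $-\sum_{n\ge1} p_n z^n \cdot E(z) = z\,E'(z)$ after clearing the denominator (legitimate since $E(z)=1+O(z)$ is invertible in $\Hcl[[z]]$). Comparing the coefficient of $z^n$ on both sides, this is the recurrence
\[
 p_n = \sum_{r=1}^{n} (-1)^{r-1}\, p_{n-r} * e_r
 \qquad(\text{with } p_0=1,\ e_0=1),
\]
which is the standard Newton identity written multiplicatively in the commutative algebra $\Hcl$. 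So the whole lemma reduces to verifying this single recurrence for the specific elements $e_r=q^{\binom{r}{2}}[I_{(1^r)}]$ and $p_n=\sum_{|\lambda|=n}(q;q)_{\ell(\lambda)-1}[I_\lambda]$.

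\textbf{Verifying the recurrence via the Pieri rule.}
To check the recurrence I would expand each product $e_r * [I_\nu]$ using the Pieri formula of Corollary \ref{cor:Pieri}: since $e_r = q^{\binom{r}{2}}[I_{(1^r)}]$, the product $[I_{(1^r)}]*[I_\nu]$ is a sum over partitions $\lambda$ with $\lambda-\nu$ a vertical $r$-strip, with structure constants $g^{\lambda}_{\nu,(1^r)}$ expressed through $q$-binomials in the conjugate multiplicities. Substituting the definitions of $p_{n-r}$ and $e_r$ and collecting the coefficient of a fixed $[I_\lambda]$ on the right-hand side, the claim becomes a purely combinatorial identity: for each $\lambda$ with $|\lambda|=n$,
\[
 (q;q)_{\ell(\lambda)-1}
 = \sum_{r=1}^{n} (-1)^{r-1} q^{\binom{r}{2}}
   \sum_{\substack{\nu\subset\lambda \\ \lambda-\nu \text{ vert.\ }r\text{-strip}}}
   (q;q)_{\ell(\nu)-1}\, g^{\lambda}_{\nu,(1^r)}.
\]
The main obstacle is precisely this coefficient identity: one must show that the alternating sum of Pieri coefficients weighted by the $(q;q)_{\ell-1}$ factors telescopes to $(q;q)_{\ell(\lambda)-1}$. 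I expect the cleanest route is to group the inner sum according to which of the $\ell(\lambda)$ possible boxes are removed from $\lambda$ (vertical strips correspond to choosing a subset of the distinct column-ends), so that the $q$-binomials factor and the alternating sum over $r$ collapses by the terminating $q$-binomial theorem (Lemma \ref{lem:q-binom}), exactly as in the computation of the antipode in Proposition \ref{prp:S}. Indeed, the structural parallel with the antipode formula suggests that the same $q$-binomial cancellation mechanism — $\sum_u (-1)^u q^{\binom{u}{2}}\bnm{s}{u}_q = (1;q)_s = 0$ — will drive the vanishing of all but the principal term, leaving the factor $(q;q)_{\ell(\lambda)-1}$ intact.
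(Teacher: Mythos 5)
Your overall strategy is the same as the paper's: take the logarithmic derivative of $E(z)=\sum_n e_n(-z)^n$ to reduce the lemma to a Newton-type recurrence, then verify that recurrence coefficientwise against the basis $[I_\lambda]$ using the Pieri formula and the terminating $q$-binomial theorem. However, there is a concrete error at the reduction step. The identity $z E'(z)=-\bigl(\sum_{n\ge1}p_nz^n\bigr)E(z)$ gives, on comparing coefficients of $z^n$, the relation $\sum_{r=0}^{n-1}(-1)^r p_{n-r}*e_r=(-1)^{n-1}\,n\,e_n$, i.e.\ $p_n=(-1)^{n-1}n\,e_n+\sum_{r=1}^{n-1}(-1)^{r-1}p_{n-r}*e_r$. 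Your recurrence $p_n=\sum_{r=1}^{n}(-1)^{r-1}p_{n-r}*e_r$ replaces the term $(-1)^{n-1}n\,e_n$ by $(-1)^{n-1}e_n$ (the $r=n$ summand with $p_0=1$), dropping the factor $n$. It already fails for $n=2$: in $\Lambda$ one has $p_1e_1-e_2=(p_1^2+p_2)/2\neq p_2$. Consequently the displayed coefficient identity you reduce to is false precisely for $\lambda=(1^n)$, $n>1$.

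This is not merely a bookkeeping slip, because the missing factor $n$ lands exactly in the one case where your proposed cancellation mechanism does not apply. For $\lambda$ not of the form $(1^n)$ the alternating sum over $r$ does collapse via $\sum_u(-1)^uq^{\binom{u}{2}}\bnm{s}{u}_q=(1;q)_s=0$, as you predict and as the paper carries out (with a case analysis over $\lambda=(r^s)$ and general $\lambda$). But for $\lambda=(1^n)$ the sum does \emph{not} vanish: it must produce the term $(-1)^{n-1}n\,e_n$, and the identity actually needed there is $\sum_{u=1}^{n}\bnm{n}{u}_t(t;t)_{u-1}=n$ (with $t=1/q$), which is not an instance of the terminating $q$-binomial theorem and requires a separate induction using $\bnm{n}{u}_t=t^u\bnm{n-1}{u}_t+\bnm{n-1}{u-1}_t$. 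So you need to (i) correct the recurrence to include the $n\,e_n$ term, and (ii) supply the $\lambda=(1^n)$ case as a genuinely different computation rather than subsuming it under the $q$-binomial vanishing. With those repairs your argument coincides with the paper's proof.
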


\begin{proof}[\textbf{\emph{Proof of 
the first half of Theorem \ref{thm:prim}}}]
We will show 
\[
 \Delta(p_n)=p_n \otimes 1 + 1 \otimes p_n.
\]
By Theorem \ref{thm:Hall-Jordan} (3), we have 
\[
 \Delta(e_n)=\sum_{m = 0}^n e_m \otimes e_{n-m}.
\]
Setting $E(z):=\sum_{n \geq 0}e_n (-z)^n$, 
one can restate this equality as 
\[
 \Delta(E(z))=E(z) \otimes E(z).
\]
Using the equality 
$E(z)=\exp(-\sum_{n \geq 1}\frac{1}{n}p_n z^n)$
in Lemma \ref{lem:p-e} and 
$x \otimes x = (x \otimes 1)*(1\otimes x)$,
we have 
\begin{align*}
 \exp\Bigl(-\sum_{n \geq 1}\frac{1}{n}\Delta (p_n)z^n\Bigr)
=\Bigl(\exp\Bigl(-\sum_{n \geq 1}\frac{1}{n}p_n z^n\Bigr) \otimes 1\Bigr)*
 \Bigl(1 \otimes \exp\Bigl(-\sum_{n \geq 1}\frac{1}{n}p_n z^n\Bigr)\Bigr).
\end{align*}
Taking the logarithm, we have 
\begin{align*}
-\sum_{n \geq 1}\frac{1}{n} \Delta(p_n)z^n
=\Bigl(-\sum_{n \geq 1}\frac{1}{n}p_n z^n \otimes 1\Bigr)
+\Bigl(1 \otimes -\sum_{n \geq 1}\frac{1}{n}p_n z^n\Bigr).
\end{align*} 
Comparing the coefficients of $z^n$ we have the statement.
\end{proof}

\begin{proof}[\textbf{\emph{Proof of Lemma \ref{lem:p-e}}}]
One can see by induction that the conclusion is equivalent to 
the equality
\begin{equation}\label{eq:p-e}
 \sum_{r=0}^{n-1}(-1)^r p_{n-r} * e_r = (-1)^{n-1}n e_n
\end{equation}
for each $n \in \bbZ_{>0}$.
So let us show \eqref{eq:p-e}.

By the definitions of $p_n$, $e_n$ and $P_\lambda$, we have 
\begin{align*}
\text{(LHS of \eqref{eq:p-e})} 
=\sum_{r=0}^{n-1}(-1)^r \sum_{|\mu|=n-r}
  q^{-n(\mu)} (q;q)_{\ell(\mu)-1} P_{\mu} * P_{(1^r)},
\quad
\text{(RHS of \eqref{eq:p-e})} 
=(-1)^{n-1} n P_{(1^n)}.
\end{align*}
By Corollary \ref{cor:Pieri} we have 
\[
 P_{\mu} * P_{(1^r)} = 
 \sum_{\substack{\abs{\lambda}=n \\ \lambda-\mu: \text{vertical strip}}} 
 q^{n(\mu)+n(1^r)-n(\lambda)} g^{\lambda}_{\mu,(1^r)} P_\lambda,
 \quad
 g^{\lambda}_{\mu,(1^r)} = 
 q^{n(\lambda)-n(\mu)-n(1^r)} \prod_{i\ge1} 
 \begin{bmatrix} \lambda'_i - \lambda'_{i+1} \\ 
                 \lambda'_i - \mu'_i \end{bmatrix}_{1/q}.
\]
Since $\{P_\lambda \mid \lambda: \tpar\}$
is a basis of $\Hcl$, we find that 
\eqref{eq:p-e} is equivalent to the equalities 
\begin{align}
\label{eq:p-e:2}
 \sum_{i=0}^{n-1} (-1)^i 
 \sum_{\substack{\abs{\mu}=n-i \\  \\ \lambda-\mu: \text{vertical strip}}} 
 q^{-n(\mu)} (q;q)_{\ell(\mu)-1} 
 \begin{bmatrix} \lambda'_i - \lambda'_{i+1} \\ 
                 \lambda'_i - \mu'_i \end{bmatrix}_{1/q}
 = \delta_{\lambda,(1^n)}(-1)^{n-1} n
\end{align}
for any partition $\lambda$ with $\abs{\lambda}=n$.
So let us show \eqref{eq:p-e:2}.

First consider the case $\lambda=(1^n)$.
Then the summation in the left hand side of \eqref{eq:p-e:2}
is over $\mu=(1^{n-u})$ with $0 \le u \le n-1$, so we have 
\begin{align*}
\text{(LHS of \eqref{eq:p-e:2})}
=\sum_{u=0}^{n-1}(-1)^u q^{-\binom{n-u}{2}} (q;q)_{n-u-1}
 \begin{bmatrix}n\\ i\end{bmatrix}_{1/q}
=(-1)^{n-1} \sum_{u=0}^{n-1} (1/q;1/q)_{n-u-1}
 \begin{bmatrix}n \\ u\end{bmatrix}_{1/q}.
\end{align*}
Thus, denoting $t:=1/q$ we have 
\[
 \left(\eqref{eq:p-e:2} \text{ for }\lambda=(1^n)\right) 
 \iff 
 \sum_{u=1}^n \begin{bmatrix}n \\ u\end{bmatrix}_t (t;t)_{u-1}=n.
\]
One can show this equality by induction on $n$ 
using the recursive formula
$\bnm{n}{u}_t=t^u\bnm{n-1}{u}_t+\bnm{n-1}{u-1}_t$.
%of the $t$-binomial coefficients.

Next consider the case $\lambda=(r^s)$ with $r>1$.
%The same argument as in the case $\lambda=(1^s)$ yields that 
The summation %in the left hand side of \eqref{eq:p-e:2}
is over $\mu=(r^{s-u}, (r-1)^{u})$ with $0 \le u \le s$.
Since $n(\mu)=(r-1)\binom{s}{2}+\binom{u}{2}$
and $\bnm{s}{u}_{1/q}=q^{-u(s-u)}\bnm{s}{u}_q$, 
%\eqref{eq:p-e:2} is equivalent to the equality
we have
\[
 \left(\text{LHS of \eqref{eq:p-e:2}}\right) 
=q^{-(r-1)\binom{s}{2}} (q;q)_{s-1}
 \sum_{u=0}^s (-1)^{u} q^{-\binom{u}{2}} 
 \begin{bmatrix}s \\ u\end{bmatrix}_{1/q}
=q^{-(r-1)\binom{s}{2}} (q;q)_{s-1}
 \sum_{u=0}^s (-u^{1-s})^u q^{\binom{u}{2}} 
 \begin{bmatrix}s \\ u\end{bmatrix}_q.
\]
The $q$-binomial theorem (Lemma \ref{lem:q-binom}) yields
$\sum_{u=0}^s (-u^{1-s})^u q^{\binom{u}{2}} \bnm{s}{u}_q
=(u^{1-s};u)_s=0$,
so the case $\lambda=(r^s)$ is proved.

In the general case $\lambda =(r_1^{s_1},\ldots ,r_l^{s_l})$ 
with $l>1$,
the summation is over 
\[
 \mu=\left(r_1^{s_1-u_1},(r_1-1)^{u_1}, r_2^{s_2-u_2},(r_2-1)^{u_2},
   \ldots, r_l^{s_l-u_l},(r_l-1)^{u_l}\right)
\]
with $0 \le u_j \le s_j$.
Assume that $r_l>1$.
Then $\ell(\mu)=\ell(\lambda)$ so we have 
\begin{align*}%\label{eq:p-e:gen}
 \left(\text{LHS of \eqref{eq:p-e:2}}\right) 
=(q;q)_{\ell(\lambda)-1}
 \sum_{(u_1, \ldots ,u_l)}
 (-1)^{u_1+\cdots +u_l}q^{-n(\mu)}
 \begin{bmatrix}s_1\\ u_1\end{bmatrix}_{1/q} \cdots 
 \begin{bmatrix}s_l\\ u_l\end{bmatrix}_{1/q}.
\end{align*}
%where $u_i$ denotes the length of shaded parts in Figure \ref{fig:u_i} below,
%In that figure, we identify partitions and the corresponding Young diagrams.
%$\lambda$ corresponds to the outermost diagram,
%and $\lambda-\mu$ corresponds to the shaded parts. 
%Note that setting 
%$\wt{\lambda}:=\lambda-(1^{\ell(\lambda)})
%=\left((r_1-1)^{s_1}, \ldots , (r_l-1)^{s_l}\right)$ 
%we have
%\begin{align*}
%n(\lambda)=n(\wt{\lambda})+\binom{s_1+\cdots +s_l}{2}.
%\end{align*}
%Then we have 
%\begin{align*}
%n(\mu)=n(\wt{\lambda})
%+(0+1+\cdots +(s_1-u_1-1))
%+\bigl(s_1+(s_1+1)+\cdots +(s_1+s_2-u_2-1)\bigr) \\
%+\cdots 
%+\bigl((s_1+\cdots +s_{l-1})+\cdots +(s_1+\cdots +s_{l-1}+s_l-u_l-1)\bigr).
%\end{align*}
By \eqref{eq:n(lambda)-n(mu)} in the proof of 
Proposition \ref{prp:S} we have 
\begin{align*}
n(\mu)=n(\lambda)-
\sum_{j=1}^l \bigl(u_j(s_1+\cdots+s_{j-1})
             +u_j s_j-\binom{u_j+1}{2}\bigr).
\end{align*}
Then using $\bnm{s}{u}_{1/q}=q^{u(u-s)}\bnm{s}{u}_q$ we have 
\begin{align*}
\left(\text{LHS of \eqref{eq:p-e:2}}\right)
=&(q;q)_{\ell(\lambda)-1} q^{-n(\lambda)} 
  \Bigl(\sum_{u_1=0}^{s_1}
        (-1)^{u_1} q^{\binom{u_1}{2}}
        \begin{bmatrix}s_1 \\ u_1\end{bmatrix}_q\Bigr)
  \cdot 
  \Bigl(\sum_{u_2=0}^{s_2} 
        (-q^{s_1})^{u_2} q^{\binom{u_2}{2}}
        \begin{bmatrix}s_2 \\ u_2\end{bmatrix}_q\Bigr)\cdot
 \\
 &\cdots \cdot
   \Bigl(\sum_{u_l=0}^{s_l} 
         (-q^{s_1+\cdots +s_{l-1}})^{u_l} q^{\binom{u_l}{2}}
         \begin{bmatrix}s_l \\ u_l\end{bmatrix}_q\Bigr).
\end{align*}
The $q$-binomial theorem (Lemma \ref{lem:q-binom}) gives 
$\sum_{u_1=0}^{s_1}
 (-1)^{u_1} q^{\binom{u_1}{2}}
 \bnm{s_1}{u_1}_q=0$,
so \eqref{eq:p-e:2} is shown under the assumption $r_l>1$.

The final case is 
$\lambda =(r_1^{s_1},\ldots ,r_l^{s_l})$
with $l>1$ and $r_l=1$.
The summation is over 
\[
 \mu=\bigl(r_1^{s_1-u_1},(r_1-1)^{u_1}, \ldots,
           r_{l-1}^{s_{l-1}-u_{l-1}},(r_{l-1}-1)^{u_{l-1}},
           1^{s_l-u_l}\bigr)
\]
with $0 \le u_j \le s_j$.
Then $\ell(\mu)=s_1+\cdots+s_l-u_l=\ell(\wt{\lambda})+s_l-u_l$
with $\wt{\lambda}:=\lambda-(1^{\ell(\lambda)})$, so we have 
\begin{align*}
 \left(\text{LHS of \eqref{eq:p-e:2}}\right) 
=&(q;q)_{\ell(\wt{\lambda}} q^{-n(\lambda)} 
  \Bigl(\sum_{u_1=0}^{s_1}
        (-1)^{u_1} q^{\binom{u_1}{2}}
        \begin{bmatrix}s_1 \\ u_1\end{bmatrix}_q\Bigr)
  \cdot 
  \Bigl(\sum_{u_2=0}^{s_2} 
        (-q^{s_1})^{u_2} q^{\binom{u_2}{2}}
        \begin{bmatrix}s_2 \\ u_2\end{bmatrix}_q\Bigr)\cdot
 \\
 &\cdots \cdot
   \Bigl(\sum_{u_l=0}^{s_l} 
         (q;q)_{s_l-u_l}
         (-q^{s_1+\cdots +s_{l-1}})^{u_l} q^{\binom{u_l}{2}}
         \begin{bmatrix}s_l \\ u_l\end{bmatrix}_q\Bigr).
\end{align*}
Thus by the same reason as in the case $r_l>1$,
the first summation vanishes and the proof is completed.

\end{proof}

Next we turn to the proof of the latter part of Theorem \ref{thm:prim}.
Let us note 

\begin{lem}\label{lem:p:basis}
For a partition $\lambda=(\lambda_1, \lambda_2, \ldots)$,
we define $p_\lambda \in \Hcl$ by 
$p_{\lambda} := p_{\lambda_1} * p_{\lambda_2} * \cdots$.
Then 
$\{p_{\lambda} \mid \lambda: \tpar\}$
is a basis of $\Hcl$.
\end{lem}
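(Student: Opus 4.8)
The plan is to compare the family $\{p_\lambda \mid \lambda:\tpar\}$ with the already-known basis $\{e_\lambda \mid \lambda:\tpar\}$, where $e_\lambda := e_{\lambda_1} * e_{\lambda_2} * \cdots$. By Theorem \ref{thm:Hall-Jordan} (2) the algebra $\Hcl \simeq \bbC[[I_{(1)}],[I_{(1^2)}],\ldots]$ is a polynomial ring, and since $e_n = q^{\binom{n}{2}}[I_{(1^n)}]$ is a nonzero scalar multiple of the $n$-th generator, the monomials $\{e_\lambda\}$ form a $\bbC$-basis of $\Hcl$. So it suffices to show that the two families are related by an invertible change of generators, which is exactly the content of the Newton-type identity established for $\Hcl$.

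First I would record that each $p_n = \sum_{\abs{\lambda}=n}(q;q)_{\ell(\lambda)-1}[I_\lambda]$ is homogeneous of degree $n$ for the $K_0(\catA)=\bbZ$-grading, and that the product $*$ respects this grading by Lemma \ref{lem:graded}. Hence $p_\lambda$ is homogeneous of degree $\abs{\lambda}$, and it is enough to prove, for each $n$, that $\{p_\lambda \mid \abs{\lambda}=n\}$ is a basis of the degree-$n$ component $\Hcl_n$. Since the number of such $p_\lambda$ equals the number of partitions of $n$, which is precisely $\dim_{\bbC}\Hcl_n = \abs{\{[I_\mu]\mid \abs{\mu}=n\}}$, the problem reduces to showing that these $p_\lambda$ span $\Hcl_n$.

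The key input is the relation \eqref{eq:p-e} from Lemma \ref{lem:p-e}. Isolating the $r=0$ term (with $e_0=1$) and rearranging gives
\[
 e_n = \frac{(-1)^{n-1}}{n}\Bigl(p_n + \sum_{r=1}^{n-1}(-1)^r p_{n-r} * e_r\Bigr),
\]
so by induction on $n$ each $e_n$ lies in the subalgebra generated by $p_1,\ldots,p_n$; being homogeneous of degree $n$, it is in fact a $\bbC$-linear combination of the monomials $p_\mu$ with $\abs{\mu}=n$. Consequently every $e_\lambda$ with $\abs{\lambda}=n$ lies in the span of $\{p_\mu \mid \abs{\mu}=n\}$. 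As $\{e_\lambda \mid \abs{\lambda}=n\}$ already spans $\Hcl_n$, so does $\{p_\mu \mid \abs{\mu}=n\}$, which completes the spanning step and hence the proof.

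Once \eqref{eq:p-e} is available the argument is essentially formal, so the only genuine obstacle is book-keeping. I would make sure that the induction producing $e_n \in \bbC[p_1,\ldots,p_n]$ respects the grading, so that no $p_\mu$ with $\abs{\mu}\neq n$ intrudes into the degree-$n$ expansion, and that the matching of cardinalities $\abs{\{p_\lambda : \abs{\lambda}=n\}} = \dim_{\bbC}\Hcl_n$ is invoked correctly to upgrade spanning to a basis.
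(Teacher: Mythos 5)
Your proposal is correct and follows essentially the same route as the paper's own proof: it takes the basis $\{e_\lambda\}$ from Theorem \ref{thm:Hall-Jordan} (2), applies \eqref{eq:p-e} recursively to express each $e_n$ (hence each $e_\lambda$) in terms of the $p_\mu$'s, and concludes by comparing graded dimensions. Your write-up merely makes the grading bookkeeping and the cardinality count more explicit than the paper does.
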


\begin{proof}
For a partition $\lambda=(\lambda_1,\lambda_2,\ldots)$,
we set $e_\lambda := e_{\lambda_1} * e_{\lambda_2} * \cdots$.
By Theorem \ref{thm:Hall-Jordan} (2),
$\{e_\lambda \mid \lambda: \tpar\}$ is a basis of $\Hcl$.
Using \eqref{eq:p-e} repeatedly, we find that 
any $e_\lambda$ can be written by a linear combination of $p_\mu$'s.
Considering the graded dimension, we obtain the conclusion.
\end{proof}

\begin{proof}[\textbf{\emph{Proof of the latter half of Theorem \ref{thm:prim}}}]
Assume that $x \in \Hcl$ is primitive and homogeneous of degree $n$.
By Lemma \ref{lem:p:basis} we an write 
$x=\sum_{\abs{\lambda}=n} c_{\lambda}p_{\lambda}$, $c_\lambda \in \bbC$.
Then using some $y_\lambda \in \Hcl$ we have 
\[
\Delta(x)
 =\sum_{\abs{\lambda}=n} c_{\lambda}\Delta(p_{\lambda})
 =\sum_{\abs{\lambda}=n} c_{\lambda}
  \prod_{i=1}^{\ell(\lambda)}
  \left(p_{\lambda_i}\otimes 1+1 \otimes p_{\lambda_i}\right)
 =\sum_{\abs{\lambda}=n} c_{\lambda}
  \left(p_{\lambda}\otimes 1+y_\lambda+1\otimes p_{\lambda}\right).
\]
By this calculation we find 
$\Delta(x) =  x\otimes 1+ 1 \otimes x$ $\iff$
$y_\lambda = 0$ for any $\lambda$.
Since 
$y_\lambda
=\sum_{1<\ell(\nu)+\ell(\mu)<\ell(\lambda)} p_{\nu}\otimes p_{\mu}$,
we have $y_\lambda=0$ $\iff$ $\ell(\lambda)=1$.
Thus $\Delta(x)=x \otimes 1 + 1 \otimes x$ implies 
$x=c_{(n)} p_n$.
\end{proof}

%%%%%%%%%%%%%%%%%%%%%%%%%%%%%%%%%%%%%%%%%%%%%%%%%%%%%%%%%%%%%%%%%%%%%%
\subsection{The Hopf pairing of $p_n$'s}

\begin{thm}\label{thm:p_n:pair}
For $m,n \in \bbN$ we have 
\[
 \pair{p_m}{p_n} = \delta_{m,n}\frac{n}{q^n-1}.
\]
\end{thm}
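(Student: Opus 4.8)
The plan is to separate the off-diagonal case (handled instantly by the grading) from the diagonal case $\pair{p_n}{p_n}$, and then to reduce the latter to a single explicit pairing $\pair{e_n}{p_n}$ by pairing the Newton-type identity \eqref{eq:p-e} against $p_n$ and exploiting the primitivity of $p_n$ established in Theorem \ref{thm:prim}.

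First I would dispose of $m \neq n$. Since $\pair{[I_\lambda]}{[I_\mu]}=\delta_{\lambda,\mu}a_\lambda^{-1}$ vanishes unless $\lambda=\mu$, and $p_m$, $p_n$ are supported on partitions of $m$ and of $n$ respectively, a nonzero contribution forces $\abs{\lambda}=\abs{\mu}$, hence $m=n$. This produces the factor $\delta_{m,n}$ and leaves only $\pair{p_n}{p_n}$ to compute. Next I would pair \eqref{eq:p-e}, that is $\sum_{r=0}^{n-1}(-1)^r p_{n-r}*e_r=(-1)^{n-1}n e_n$, with $p_n$. Each left-hand summand has the form $\pair{p_{n-r}*e_r}{p_n}$, so the Hopf pairing axiom gives $\sum\pair{p_{n-r}}{(p_n)_1}\pair{e_r}{(p_n)_2}$, and by primitivity $\Delta(p_n)=p_n\otimes 1+1\otimes p_n$ this equals $\pair{p_{n-r}}{p_n}\pair{e_r}{1}+\pair{p_{n-r}}{1}\pair{e_r}{p_n}$. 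Now $\pair{y}{1}=\pair{y}{[I_\emptyset]}$ vanishes on any element of positive degree; for $1\le r\le n-1$ both $e_r$ and $p_{n-r}$ have positive degree, so both terms vanish and only the $r=0$ summand survives, yielding $\pair{p_n*1}{p_n}=\pair{p_n}{p_n}$ since $e_0=1$. Hence $\pair{p_n}{p_n}=(-1)^{n-1}n\,\pair{e_n}{p_n}$.

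Finally I would compute $\pair{e_n}{p_n}$ directly. From the definition of $p_n$ and diagonality, $\pair{[I_\lambda]}{p_n}=(q;q)_{\ell(\lambda)-1}a_\lambda^{-1}$; specializing to $\lambda=(1^n)$, using $e_n=q^{\binom{n}{2}}[I_{(1^n)}]$ and the value $a_{(1^n)}=(-1)^n q^{\binom{n}{2}}(q;q)_n$ from Lemma \ref{lem:I_lambda:pair}, I obtain $\pair{e_n}{p_n}=(-1)^n (q;q)_{n-1}/(q;q)_n=(-1)^n/(1-q^n)$. Substituting back gives $\pair{p_n}{p_n}=(-1)^{n-1}n\cdot(-1)^n/(1-q^n)=n/(q^n-1)$, as desired.

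The computation is short, so there is no serious structural obstacle; the only delicate points are the sign and $q$-power bookkeeping in the last step, and making sure the cross terms genuinely cancel, which relies simultaneously on the primitivity of $p_n$ and on the strict positivity of the degrees of both $e_r$ and $p_{n-r}$ in the range $1\le r\le n-1$. Everything else is a direct invocation of the Hopf pairing property.
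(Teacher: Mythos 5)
Your proof is correct, but it takes a genuinely different route from the paper. The paper computes $\pair{p_n}{p_n}$ head-on: it expands the pairing as a sum over all partitions of $n$ using Lemma \ref{lem:I_lambda:pair}, and then establishes the resulting $q$-series identity \eqref{eq:p_n:pair:i} by first reducing it to the multinomial identity \eqref{eq:p_n:pair:ii} and proving that by induction with the $q$-Chu--Vandermonde formula \eqref{eq:q-CV}. You instead exploit the Hopf structure: pairing the Newton-type relation \eqref{eq:p-e} against $p_n$, invoking the Hopf-pairing axiom $\pair{x*y}{z}=\sum\pair{x}{z_1}\pair{y}{z_2}$ together with the primitivity of $p_n$ (Theorem \ref{thm:prim}) to kill all cross terms, and thereby reducing everything to the single evaluation $\pair{e_n}{p_n}=(-1)^n/(1-q^n)$, which follows from orthogonality of the $[I_\lambda]$ and the value $a_{(1^n)}=(-1)^n q^{\binom{n}{2}}(q;q)_n$. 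There is no circularity: both \eqref{eq:p-e} and Theorem \ref{thm:prim} are established in the paper before and independently of Theorem \ref{thm:p_n:pair}. Your argument is shorter and more structural, essentially the classical adjointness argument from symmetric function theory transplanted to $\Hcl$; what the paper's longer computation buys is a self-contained combinatorial proof that produces the partition identity \eqref{eq:p_n:pair:ii} as a byproduct of independent interest. The only delicate points in your version --- the vanishing of $\pair{y}{[I_\emptyset]}$ for $y$ of positive degree, and the sign and $q$-power bookkeeping in the final step --- all check out.
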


Precisely speaking, 
for $m,n \in \bbZ_{>0}$ we have the above formula,
for $m n = 0$ and $(m,n)\neq (0,0)$ we have $\pair{p_m}{p_n}=0$,
and for $m=n=0$ we have $\pair{p_0}{p_0}=1$.

\begin{rmk*}
Under the isomorphism 
$\psi: \Hcl \simto \Lambda$
in Theorem \ref{thm:Hall=Lambda} we have 
$\pair{\psi(p_m)}{\psi(p_n)} = \delta_{m,n} n/(1-q^{-n})$,
which is nothing but the inner product on $\Lambda$ 
for Hall-Littlewood symmetric functions \cite[Chap.V\!I \S1]{M} 
with the replacement $t=q^{-1}$.
\end{rmk*}

\begin{proof}
If the $K_0(\catA)=\bbZ$-gradings are different, 
then the pairing is $0$.
So it is enough to check $\pair{p_n}{p_n} = n/(q^n-1)$.
By Definition \ref{dfn:p_n} of $p_n$ and 
the pairing of $[I_\lambda]$ (Lemma \ref{lem:I_lambda:pair}),
we have 
\begin{equation}\label{eq:p_n^2}
 \pair{p_n}{p_n}
 = \sum_{\abs{\lambda}=n}
   \frac{(q;q)_{\ell(\lambda)-1}^2}
        {q^{\abs{\lambda}+2n(\lambda)} 
         \prod_{i\ge1}(q^{-1};q^{-1})_{m_i(\lambda)}}
 = q^{-n} \sum_{\abs{\lambda}=n} 
   q^{-2n(\lambda)+2\binom{\ell(\lambda)}{2}}
   \frac{(q^{-1};q^{-1})_{\ell(\lambda)-1}^2}
        {\prod_{i\ge1}(q^{-1};q^{-1})_{m_i(\lambda)}}.
\end{equation}
Now we demand that the statement can be deduced from 
the following equality for indeterminates $t$ and $u$: 
\begin{align}\label{eq:p_n:pair:i}
 \sum_{|\lambda|=n} 
 t^{2n(\lambda)-2\binom{\ell(\lambda)}{2}}
 \frac{ (t;t)_{\ell(\lambda)-1} (u^{-1};t)_{\ell(\lambda)}}
      {\prod_{i\ge1}(t;t)_{m_i(\lambda)}}
 u^{\ell(\lambda)}
=\frac{u^n-1}{1-t^n}.
\end{align}
Actually, dividing both sides by $u-1$, taking the limit $u \to 1$,
and finally setting $t=q^{-1}$, we have 
\begin{align*}
 \frac{\text{(LHS of \eqref{eq:p_n:pair:i})}}{u-1} 
 \to \sum_{|\lambda|=n} 
 q^{-n(\lambda)+2\binom{\ell(\lambda)}{2}}
 \frac{(q^{-1};q^{-1})_{\ell(\lambda)-1}(q^{-1};q^{-1})_{\ell(\lambda)-1}}
      {\prod_i(q^{-1};q^{-1})_{m_i(\lambda)}},
 \quad
 \frac{\text{(RHS of \eqref{eq:p_n:pair:i})}}{u-1} 
 \to \frac{n}{1-q^{-n}},
\end{align*}
so we find that $\eqref{eq:p_n^2}=n/(q^n-1)$.

Let us introduce the $t$-multinomial coefficient. 
For $l,m_1,\ldots,m_n$ with $n \ge 2$, we set 
\[
 \begin{bmatrix} l \\ m_1, m_2, \ldots, m_n \end{bmatrix}_t
 :=\frac{(t;t)_l}{(t;t)_{m_1}(t;t)_{m_2} \cdots (t;t)_{m_n}}.
 %\quad (n\ge2)
\]
For example, if $n=2$ and $l=m_1+m_2$, then 
$\bnm{l}{m_1, m_2}_t=\bnm{l}{m_1}_t=\bnm{l}{m_2}_t$.
Noticing that 
$\ell(\lambda)=m_1(\lambda)+\cdots+m_n(\lambda)$,
we can rewrite \eqref{eq:p_n:pair:i} as 
\begin{align}
\nonumber
 \left(\text{LHS of \eqref{eq:p_n:pair:i}}\right)
&=\sum_{\abs{\lambda}=n} 
   t^{2n(\lambda)-\ell(\lambda)(\ell(\lambda)-1)}
   \begin{bmatrix}
    l(\lambda) \\ m_1(\lambda), \ldots , m_n(\lambda)
   \end{bmatrix}_t 
   \frac{(u^{-1};t)_{l(\lambda)}}{1-t^{\ell(\lambda)}}
   u^{\ell(\lambda)}
\\
\label{eq:p_n:pair:i'}
&=\sum_{\abs{\lambda}=n}
   t^{-\ell(\lambda)(\ell(\lambda)-1)} u^{\ell(\lambda)}
   \frac{(u^{-1};t)_{\ell(\lambda)}}{1-t^{\ell(\lambda)}}
   \cdot
   t^{2n(\lambda)}
   \begin{bmatrix}
    \ell(\lambda) \\ m_1(\lambda), \ldots , m_n(\lambda)
   \end{bmatrix}_t. 
\end{align}
Note that the factors before $\cdot$ depend only on $\ell(\lambda)$.
So fixing $n,l \in \bbN$ let us consider the equality
\begin{equation}\label{eq:p_n:pair:ii}
 \sum_{\lambda: \, |\lambda|=n, \, \ell(\lambda)=l}
 t^{2n(\lambda)}
 \begin{bmatrix}l\\ m_1(\lambda), \ldots ,m_n(\lambda) \end{bmatrix}_t
=t^{l(l-1)} \begin{bmatrix} n-1 \\ l-1 \end{bmatrix}_t .
\end{equation}
This equality will deduce %\eqref{eq:p_n:pair:ii}
\begin{align*}
  \left(\text{RHS of \eqref{eq:p_n:pair:i'}}\right)
&=\sum_{l \geq 1} t^{-l(l-1)} u^l \frac{(u^{-1};t)_l}{1-t^l}
   \cdot t^{l(l-1)} \begin{bmatrix} n-1\\ l-1\end{bmatrix}_t
 =\frac{1}{1-t^n} \sum_{l \geq 1}
   \begin{bmatrix} n\\l \end{bmatrix}_t (u^{-1};t)_l u^l
\\
&=\frac{1}{1-t^n}(u^n-1)
 =\left(\text{RHS of \eqref{eq:p_n:pair:i}}\right).
\end{align*}
Here at the third equality we used the elementary formula 
\begin{equation}\label{eq:ex5}
 \sum_{l=0}^n 
 \begin{bmatrix} n \\ l \end{bmatrix}_t (u^{-1};t)_l \, u^l
 = u^n.
\end{equation}
Thus it is enough to show \eqref{eq:p_n:pair:ii}.

The proof is by induction on $n$. 
The case $n=1$ is obvious, so assume $n>1$.
For a partition $\lambda$ such that $\ell(\lambda)=l$,
set $\mu=\wt{\lambda}:=\lambda-(1^l)$.
Then by $\mu'_i = \lambda'_{i+1}$ and $l=l(\lambda)=\lambda'_1$
we have 
\begin{align*}
 2n(\lambda)
&= 2\sum_{i \ge 1} \binom{\lambda'_i}{2}
 = \lambda'_i(\lambda'_i-1)+ 2\sum_{i \ge 1} \binom{\mu'_i}{2} 
 = l(l-1)+2n(\mu).
\end{align*}
We also have  $m_{i+1}(\lambda)=m_{i}(\mu)$, 
and if $\abs{\lambda}=n$ then 
$\abs{\mu}=\abs{\lambda}-l=n-l$.
So we have 
\begin{align*}
  \left(\text{LHS of \eqref{eq:p_n:pair:ii}}\right)
&=\sum_{\lambda: \, |\lambda|=n, \, l(\lambda)=l} t^{l(l-1)}
  \begin{bmatrix} l \\ m_1(\lambda) \end{bmatrix}_t 
  \cdot t^{2n(\wt{\lambda})}
  \begin{bmatrix}
   l-m_1(\lambda) \\ m_2(\lambda),\ldots, m_n(\lambda)
  \end{bmatrix}_t 
\\
&=t^{l(l-1)}
  \sum_{m_1 \in \bbN} 
  \begin{bmatrix} l \\ m_1 \end{bmatrix}_t
  \sum_{\mu: \, |\mu|=n-l, \, l(\mu)=l-m_1} t^{2n(\mu)}
  \begin{bmatrix} 
   l-m_1 \\ m_1(\mu), \ldots, m_{n-l}(\mu)
  \end{bmatrix}_t
\\
&=t^{l(l-1)}\sum_{m_1 \geq 0}t^{(l-m_1)(l-m_1-1)}
  \begin{bmatrix} n-l-1 \\ l-m_1-1 \end{bmatrix}_t
  \begin{bmatrix} l \\ m_1\end{bmatrix}_t
 =t^{l(l-1)} \begin{bmatrix} n-1 \\ l-1 \end{bmatrix}_t
 =\left(\text{RHS of \eqref{eq:p_n:pair:ii}}\right).
\end{align*}
Here at the third equality we used the induction hypothesis,
and at the fourth equality we applied the $q$-Chu-Vandermonde identity
\begin{equation}\label{eq:q-CV}
 \sum_{j \ge 0} t^{(a-k+j)j}
 \begin{bmatrix} a   \\ k-j \end{bmatrix}_t
 \begin{bmatrix} b   \\ j   \end{bmatrix}_t 
=\begin{bmatrix} a+b \\ k   \end{bmatrix}_t
\end{equation}
with $a=l$, $k-j=m_1$, $b=n-l-1$ and  $j=l-m_1-1$.
Thus \eqref{eq:p_n:pair:ii} is shown for any $n$.
\end{proof}

%%%%%%%%%%%%%%%%%%%%%%%%%%%%%%%%%%%%%%%%%%%%%%%%%%%%%%%%%%%%%%%%%%%%%%
\subsection{Cauchy-type kernel function}

\begin{dfn}\label{dfn:Q_lambda} 
For a partition $\lambda$, define $Q_\lambda \in \Hcl$ by 
\[
 Q_\lambda := 
 P_\lambda / \pair{P_\lambda}{P_\lambda} = 
 P_\lambda \cdot q^{\abs{\lambda}}
 \prod_{i\ge1} (q^{-1};q^{-1})_{m_i(\lambda)}.
\]
\end{dfn}

Let us denote by $\Hcl \wotimes \Hcl$
the completion of $\Hcl \otimes \Hcl$ 
in terms of the $K_0(\catA)=\bbZ$-grading.

\begin{prp}\label{prp:Cauchy}
In $\Hcl \wotimes \Hcl$ we have the following equality.
\[
 \sum_{\lambda:\tpar} P_{\lambda} \otimes Q_{\lambda}
=\exp\Bigl(\sum_{n\ge 1} \frac{1}{n}(q^n-1) p_n \otimes p_n\Bigr).
\]
\end{prp}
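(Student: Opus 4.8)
The plan is to reduce the identity to the orthogonality data already established, using the two orthogonal bases $\{P_\lambda\}$ and $\{p_\lambda\}$ together with the pairing formulas from Theorem \ref{thm:p_n:pair} and Lemma \ref{lem:I_lambda:pair}. The basic principle is this: for a finite-dimensional inner product space with a basis $\{v_\alpha\}$ and dual basis $\{w_\alpha\}$ (meaning $\pair{v_\alpha}{w_\beta}=\delta_{\alpha,\beta}$), the element $\sum_\alpha v_\alpha \otimes w_\alpha$ is the canonical ``reproducing kernel'' and is independent of the chosen basis. Since $\{P_\lambda\}$ is orthogonal and $Q_\lambda = P_\lambda/\pair{P_\lambda}{P_\lambda}$ is precisely its dual basis, the left-hand side $\sum_\lambda P_\lambda \otimes Q_\lambda$ is this canonical kernel. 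So the strategy is to compute the same canonical kernel using the power-sum basis $\{p_\lambda\}$ instead and show it equals the right-hand side.

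First I would make the reproducing-kernel characterization precise in the graded/completed setting $\Hcl \wotimes \Hcl$: in each fixed degree $n$, the space is finite-dimensional, so the kernel $\sum_{\abs{\lambda}=n} v_\lambda \otimes w_\lambda$ for any pair of dual bases is well-defined and basis-independent, and assembling over all $n$ gives a well-defined element of the completion. Next I would compute the dual basis to $\{p_\lambda\}$. From Theorem \ref{thm:p_n:pair} we have $\pair{p_m}{p_n}=\delta_{m,n}\, n/(q^n-1)$, and since the $p_n$ are primitive (Theorem \ref{thm:prim}), the Hopf-pairing property $\pair{x*y}{z}=\sum\pair{x}{z_1}\pair{y}{z_2}$ lets me compute $\pair{p_\lambda}{p_\mu}$ for products. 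The key computation is that $\{p_\lambda\}$ is an orthogonal basis with
\[
 \pair{p_\lambda}{p_\mu} = \delta_{\lambda,\mu}\, z_\lambda(q),
 \quad
 z_\lambda(q) := \prod_{i\ge1} m_i(\lambda)!\, \Bigl(\frac{i}{q^i-1}\Bigr)^{m_i(\lambda)}.
\]
This follows by expanding both products via the Hopf-pairing axiom and primitivity, which forces the partitions to match part-by-part and produces the stated combinatorial factor.

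Given this, the canonical kernel in the $p$-basis is $\sum_\lambda p_\lambda \otimes (p_\lambda/z_\lambda(q))$, and I would rewrite this sum over partitions as a product over part-sizes. Writing $\lambda$ through its multiplicities $m_i=m_i(\lambda)$, the sum factorizes:
\[
 \sum_{\lambda:\tpar} \frac{p_\lambda \otimes p_\lambda}{z_\lambda(q)}
 = \prod_{i\ge1} \sum_{m_i\ge0}
   \frac{1}{m_i!}\Bigl(\frac{q^i-1}{i}\Bigr)^{m_i}
   (p_i \otimes p_i)^{* m_i}
 = \exp\Bigl(\sum_{n\ge1}\frac{1}{n}(q^n-1)\, p_n\otimes p_n\Bigr),
\]
where $(p_i\otimes p_i)^{*m}$ uses the twisted product on $\Hcl\otimes\Hcl$, which is untwisted here because the Euler form vanishes identically on this category (so $\EF{\cdot}{\cdot}=0$). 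Equating the two expressions of the same canonical kernel completes the proof. The main obstacle I anticipate is the orthogonality computation $\pair{p_\lambda}{p_\mu}=\delta_{\lambda,\mu}z_\lambda(q)$: one must carefully apply the Hopf-pairing axiom to the iterated product $p_{\lambda_1}*\cdots$, track the Sweedler terms coming from the primitive coproducts $\Delta(p_n)=p_n\otimes 1+1\otimes p_n$, and correctly account for the repeated-parts symmetry factor $\prod_i m_i!$ and for the vanishing of the quadratic twist in \eqref{eq:*:tensor:twist}.
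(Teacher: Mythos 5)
Your proposal is correct and follows essentially the same route as the paper: both identify $\{P_\lambda\}$ and $\{Q_\lambda\}$ as dual bases, expand the exponential as $\sum_\lambda p_\lambda\otimes p_\lambda/\wt{z}_\lambda(q)$ using the orthogonality $\pair{p_\lambda}{p_\mu}=\delta_{\lambda,\mu}\wt{z}_\lambda(q)$ coming from Theorem \ref{thm:p_n:pair}, and conclude by basis-independence of the canonical kernel. You merely spell out the orthogonality computation via primitivity and the Hopf-pairing axiom, which the paper leaves implicit.
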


\begin{proof}
Using the basis $p_{\lambda}=p_{\lambda_1} * p_{\lambda_2} * \cdots$
in Lemma \ref{lem:p:basis}, 
we have 
\[
 \exp\Bigl(\sum_{n \geq 1}\frac{1}{n}(q^n-1)p_n \otimes p_n\Bigr)
=\sum_{\lambda: \tpar}
 \wt{z}_{\lambda}(q)^{-1}p_{\lambda} \otimes p_{\lambda},
 \quad
 \wt{z}_{\lambda}(q) := 
 \prod_{i\ge1} \Bigl(i^{m_i(\lambda)} \cdot m_i(\lambda_i)!\Bigr) 
 \cdot
 \prod_{i\ge1} (q^{\lambda_i}-1)^{-1}.
\]
By Theorem \ref{thm:p_n:pair} we know that 
$\{p_\lambda \mid \lambda: \tpar\}$ and 
$\{p_\lambda/\wt{z}_{\lambda}(q) \mid \lambda: \tpar\}$
are dual bases of $\Hcl$ with respect to the Hopf pairing.
By Lemma \ref{lem:I_lambda:pair} and Definition \ref{dfn:Q_lambda},
$\{P_\lambda \mid \lambda: \tpar\}$ and
$\{Q_\lambda \mid \lambda: \tpar\}$ are 
also dual bases.
Since the Hopf pairing is non-degenerate, we have 
$\sum_{\lambda: \tpar} P_\lambda \otimes Q_\lambda
=\sum_{\lambda: \tpar} 
  p_\lambda \otimes (p_\lambda/\wt{z}_\lambda(q))$.
\end{proof}

$\sum_\lambda P_\lambda \otimes Q_\lambda$ 
is called the Cauchy-type kernel function in the following sense.
Under the isomorphism 
$\psi: \Hcl \simto \Lambda(x)$
the identity in Proposition \ref{prp:Cauchy} is mapped to 
\[
 \sum_\lambda P_\lambda(x;q^{-1}) Q_\lambda(y;q^{-1})
=\exp\Bigl(\sum_{n\ge1}\frac{1}{n}(1-q^{-n}) p_n(x) p_n(y)\Bigr).
\]
Here $p_n(x)=\sum_i x_i^n \in \Lambda(x)$ denotes 
the power-sum symmetric function, 
$P_\lambda(x;q^{-1}) \in \Lambda(x)$
denotes the Hall-Littlewood symmetric function,
and $Q_\lambda(x;q^{-1})$ denotes the dual basis 
with respect to the inner product 
$\pair{p_n(x)}{p_m(x)}=\delta_{n,m} n/(1-q^{-n})$. 
Taking the limit $q^{-1} \to 0$, 
we have the Cauchy formula \cite[Chap.I \S4 (4.3)]{M} 
for Schur symmetric functions $s_\lambda(x)$:
\[
  \sum_{\lambda: \tpar} s_\lambda(x) s_\lambda(y) 
= \prod_{i,j}(1-x_i y_j)^{-1}
= \exp\Bigl(\sum_{n\ge1}\frac{1}{n}p_n(x) p_n(y)\Bigr).
\]

We close this subsection with Corollary \ref{cor:Q_n} 
of Proposition \ref{prp:Cauchy}.
For its proof we prepare

\begin{lem}\label{lem:P-e}
For any partition $\lambda$ we have
\[
 P_\lambda \in 
 e_{\lambda'} + \sum_{\mu<\lambda} \bbC e_{\mu'},
\]
where $<$ denotes the dominance order of partitions
(see Remark \ref{rmk:dom})
\end{lem}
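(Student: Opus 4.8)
The plan is to connect the Hall--Littlewood elements $P_\lambda$ to the products $e_{\lambda'}$ through the auxiliary elements $X_\lambda$ introduced in the proof of Theorem \ref{thm:Hall-Jordan}(2), and then to invert a unitriangular change of basis and translate the resulting order condition via transposition.

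First I would rewrite $e_{\lambda'}$ in terms of $X_\lambda$. Writing $\lambda=(1^{l_1},2^{l_2},\ldots,n^{l_n})$ we have $\lambda'_i = l_i + l_{i+1} + \cdots + l_n$, so the factors of $X_\lambda = [I_{(1^{l_n})}] * \cdots * [I_{(1^{l_1+\cdots+l_n})}]$ are precisely the $[I_{(1^{\lambda'_i})}]$. Using $e_m = q^{\binom{m}{2}}[I_{(1^m)}]$ from Definition \ref{dfn:P_lambda}, the commutativity of $\Hcl$ (Theorem \ref{thm:Hall-Jordan}(1)), and the standard identity $n(\lambda)=\sum_{i\ge1}\binom{\lambda'_i}{2}$, this would give
\[
 e_{\lambda'} = q^{n(\lambda)} X_\lambda.
\]

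Next I would feed in the triangularity established in that same proof, namely $X_\lambda = [I_\lambda] + \sum_{\mu \prec \lambda} a_{\lambda\mu}[I_\mu]$, together with $[I_\nu] = q^{-n(\nu)}P_\nu$. Combined with the previous display this yields an expansion
\[
 e_{\lambda'} = P_\lambda + \sum_{\mu \prec \lambda} c_{\lambda\mu} P_\mu, \quad c_{\lambda\mu}\in\bbC,
\]
so that the transition from the basis $\{P_\lambda\}$ to $\{e_{\lambda'}\}$ (both bases by Theorem \ref{thm:Hall-Jordan}(2), the latter being a reindexing of the $e_\mu$ via $\mu\mapsto\mu'$) is unitriangular with respect to the partial order $\preceq$ of \eqref{eq:succeq}. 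Since the inverse of a unitriangular matrix is again unitriangular, I would obtain
\[
 P_\lambda = e_{\lambda'} + \sum_{\mu \prec \lambda} b_{\lambda\mu} e_{\mu'}, \quad b_{\lambda\mu}\in\bbC.
\]

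Finally I would translate the order condition. By Remark \ref{rmk:dom}, $\mu \prec \lambda$ is equivalent to $\lambda' < \mu'$ in the dominance order, and since transposition reverses dominance this is in turn equivalent to $\mu < \lambda$ in dominance; hence every index $\mu$ in the sum satisfies $\mu<\lambda$, which is exactly the claim. I expect the only delicate point to be this bookkeeping of the partial orders under transposition --- ensuring the relation on the subscript $\mu$ of $e_{\mu'}$ comes out as $\mu<\lambda$ rather than $\mu'<\lambda'$ --- while the identification $e_{\lambda'}=q^{n(\lambda)}X_\lambda$ and the inversion step are routine.
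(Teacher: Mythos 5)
Your proposal is correct and follows essentially the same route as the paper, which proves the lemma in one line by declaring it a restatement of the inversion formula \eqref{eq:I-X} from the proof of Theorem \ref{thm:Hall-Jordan}(2); you have simply made explicit the identification $e_{\lambda'}=q^{n(\lambda)}X_\lambda$, the unitriangular inversion, and the translation $\mu\prec\lambda\iff\lambda'<\mu'\iff\mu<\lambda$ that the paper leaves implicit. All of these details check out.
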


\begin{proof}
This is a restatement of \eqref{eq:I-X} 
in the proof of Theorem \ref{thm:Hall-Jordan} (2).
\end{proof}

\begin{cor}\label{cor:Q_n}
In $\Hcl[[z]]$ we have 
\[
 \sum_{n\ge0} Q_{(n)} z^n = 
 \exp\Bigl(\sum_{n\ge1}\frac{1}{n}(q^n-1)p_n z^n\Bigr).
\]
\end{cor}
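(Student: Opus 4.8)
The plan is to deduce the Corollary from Proposition \ref{prp:Cauchy} by applying a ``one-variable specialization'' to the first tensor factor. Since $\{p_\lambda \mid \lambda:\tpar\}$ is a basis of $\Hcl$ by Lemma \ref{lem:p:basis}, the algebra $(\Hcl,*)$ is the polynomial ring $\bbC[p_1,p_2,\ldots]$, so there is a well-defined algebra homomorphism $\phi\colon (\Hcl,*)\to\bbC[[z]]$ determined by $\phi(p_n):=z^n$. First I would apply $\phi\otimes\id$ to both sides of Proposition \ref{prp:Cauchy}. Because the Euler pairing vanishes identically on $\catA$, the twist in \eqref{eq:*:tensor:twist} is trivial, so the product on $\Hcl\wotimes\Hcl$ is the ordinary one and $\phi\otimes\id$ is an algebra homomorphism into $\bbC[[z]]\wotimes\Hcl=\Hcl[[z]]$; in particular it commutes with the $*$-exponential. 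Hence the right-hand side of Proposition \ref{prp:Cauchy} is sent to $\exp\bigl(\sum_{n\ge1}\frac1n(q^n-1)z^n p_n\bigr)$, which is exactly the right-hand side of the Corollary.

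It remains to identify the image of the left-hand side, namely $\sum_{\lambda:\tpar}\phi(P_\lambda)\,Q_\lambda$, with $\sum_{n\ge0}Q_{(n)}z^n$. Thus the heart of the proof is the claim
\[
 \phi(P_\lambda)=\begin{cases} z^{\abs{\lambda}} & \lambda=(m)\ \text{for some } m\ge0,\\ 0 & \ell(\lambda)\ge 2.\end{cases}
\]
To prove it I would first compute $\phi(e_n)$ by applying $\phi$ to the generating identity of Lemma \ref{lem:p-e} (with its formal variable renamed $w$): since $\phi$ is an algebra homomorphism commuting with $\exp$,
\[
 \sum_{m\ge0}\phi(e_m)(-w)^m=\exp\Bigl(-\sum_{n\ge1}\tfrac1n (zw)^n\Bigr)=1-zw,
\]
so that $\phi(e_0)=1$, $\phi(e_1)=z$ and $\phi(e_m)=0$ for $m\ge2$. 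Consequently $\phi(e_\mu)=\prod_i\phi(e_{\mu_i})$ vanishes unless every part of $\mu$ is $\le1$, i.e.\ unless $\mu=(1^k)$, in which case $\phi(e_{(1^k)})=z^k$.

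Finally I would feed this into the triangularity of Lemma \ref{lem:P-e}, which writes $P_\lambda\in e_{\lambda'}+\sum_{\mu<\lambda}\bbC\,e_{\mu'}$ with respect to the dominance order. Since $\phi(e_{\nu'})\neq0$ only when $\nu'=(1^k)$, i.e.\ when $\nu=(k)$ is a single row, and since $(m)$ is the unique maximal partition of $m$ in the dominance order, the only surviving term for $\lambda=(m)$ is the leading one $\phi(e_{(1^m)})=z^m$, giving $\phi(P_{(m)})=z^m$; for $\ell(\lambda)\ge2$ the leading term $\phi(e_{\lambda'})$ vanishes and no single-row $\mu$ can satisfy $\mu<\lambda$, so $\phi(P_\lambda)=0$. (For $\lambda=\emptyset$ one has $Q_{(0)}=Q_\emptyset=1$ and $\phi(P_\emptyset)=1$, matching the $n=0$ term.) This establishes the claim, hence the Corollary. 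I expect the last step to be the main obstacle: one must combine the vanishing $\phi(e_m)=0$ $(m\ge2)$ with the maximality of $(m)$ in the dominance order to kill all lower-order corrections, whereas the passage through $\phi\otimes\id$ and $\exp$ is purely formal once the triviality of the twist is noted.
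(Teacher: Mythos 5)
Your proposal is correct and follows essentially the same route as the paper: the paper also applies the specialization homomorphism (defined there by $e_n \mapsto \delta_{n,1}z$, which you show equals your $\phi$ with $p_n \mapsto z^n$) to the first factor of the Cauchy kernel of Proposition \ref{prp:Cauchy}, and uses Lemma \ref{lem:P-e} to see $P_\lambda \mapsto \delta_{\lambda,(\abs{\lambda})}z^{\abs{\lambda}}$. Your write-up is in fact a bit more careful than the paper's at two points the paper leaves implicit: deriving $\phi(e_m)=0$ for $m\ge 2$ from Lemma \ref{lem:p-e}, and spelling out the dominance-order argument that kills the lower-order terms.
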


\begin{proof}
Consider the algebra homomorphism 
$\Hcl = \bbC[e_1,e_2,\ldots] \to \bbC[z]$ given by 
$e_n \mapsto \delta_{n,1} z$.
By Lemma \ref{lem:P-e} it yields 
$P_\lambda \mapsto 
 \delta_{\lambda,(\abs{\lambda})} z^{\abs{\lambda}}$, 
and by Definition \ref{dfn:p_n} 
it yields $p_n \mapsto z^n$.
Then applying this homomorphism to the first factor 
of the Cauchy-type kernel function in $\Hcl \wotimes \Hcl$
(Proposition \ref{prp:Cauchy}, we have the desired consequence.
\end{proof}

This statement can be generalized as the following proposition.
Let us set 
\[
 Q(z) := \sum_{n \ge 0} Q_{(n)} z^n \in \Hcl[[z]],
 \quad
 F(z) := \frac{1-q z}{1-z} \in \bbC[[z]].
\]

\begin{prp}[{\cite[Chap.I\!I\!I \S2 (2.15)]{M}}]
\label{prp:Q_lambda}
Let $\lambda=(\lambda_1,\ldots,\lambda_l$ be 
a partition of length $\ell(\lambda)=l$.
For indeterminates $z_1,\ldots,z_l$ we set 
\[
 Q(z_1,\ldots,z_n) := 
 Q(z_1)*\cdots*Q(z_n) \cdot \prod_{i<j} F(z_j/z_i).
\]
Then $Q_\lambda$ is the coefficient of 
$z^\lambda = z_1^{\lambda_1} \cdots z_l^{\lambda_l}$ 
in $Q(z_1,\ldots,z_n)$.
\end{prp}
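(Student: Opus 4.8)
The plan is to determine $[z^\lambda]\,Q(z_1,\ldots,z_l)$ by computing its Hopf pairings against the basis $\{P_\nu\}$, exploiting that $\{Q_\mu\}$ and $\{P_\nu\}$ are dual bases: since $\pair{Q_\mu}{P_\nu}=\delta_{\mu,\nu}$ (by Definition \ref{dfn:Q_lambda} and the orthogonality in Lemma \ref{lem:I_lambda:pair}) and the pairing is non-degenerate, any $f$ satisfies $f=\sum_\nu\pair{f}{P_\nu}\,Q_\nu$. Applying this to $f=[z^\lambda]\,Q(z_1,\ldots,z_l)$, the proposition becomes equivalent to the scalar identity
\[
 [z^\lambda]\,\pair{Q(z_1,\ldots,z_l)}{P_\nu}=\delta_{\lambda,\nu}
 \qquad(\nu:\tpar).
\]
The two inputs are Corollary \ref{cor:Q_n}, which expresses $Q(z)$ through the primitive elements $p_n$, and the Cauchy kernel of Proposition \ref{prp:Cauchy}.

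First I would introduce the $\bbC$-algebra homomorphism $\phi\colon\Hcl\to\bbC[[z_1,\ldots,z_l]]$ determined by $p_n\mapsto z_1^n+\cdots+z_l^n$; this is well defined because $\{p_\lambda\}$ is a basis (Lemma \ref{lem:p:basis}), so $\Hcl=\bbC[p_1,p_2,\ldots]$ is a polynomial ring on the $p_n$, and $\phi$ respects the $K_0(\catA)=\bbZ$-grading. Since $\Hcl$ is commutative (Theorem \ref{thm:Hall-Jordan}), Corollary \ref{cor:Q_n} gives
\[
 Q(z_1)*\cdots*Q(z_l)
 =\exp\Bigl(\sum_{n\ge1}\tfrac1n(q^n-1)\,p_n\,(z_1^n+\cdots+z_l^n)\Bigr).
\]
Applying $\id\otimes\phi$ to the Cauchy kernel of Proposition \ref{prp:Cauchy} turns the right-hand side above into $\sum_{\rho:\tpar}P_\rho\,\phi(Q_\rho)$, so that $Q(z_1)*\cdots*Q(z_l)=\sum_\rho P_\rho\,\phi(Q_\rho)$ in $\Hcl[[z_1,\ldots,z_l]]$. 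Pairing with $P_\nu$, using $\pair{P_\rho}{P_\nu}=\delta_{\rho,\nu}\pair{P_\nu}{P_\nu}$ and $P_\nu=\pair{P_\nu}{P_\nu}\,Q_\nu$, yields the clean formula $\pair{Q(z_1)*\cdots*Q(z_l)}{P_\nu}=\phi(P_\nu)$.

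Because the factor $\prod_{i<j}F(z_j/z_i)$ is a scalar in $\bbC[[z_1,\ldots,z_l]]$, this gives $\pair{Q(z_1,\ldots,z_l)}{P_\nu}=\prod_{i<j}F(z_j/z_i)\cdot\phi(P_\nu)$, and the proposition reduces to
\[
 [z^\lambda]\Bigl(\prod_{i<j}\frac{z_i-q z_j}{z_i-z_j}\cdot\phi(P_\nu)\Bigr)
 =\delta_{\lambda,\nu}.
\]
Here $\phi(P_\nu)$ is the image of $P_\nu$ under $p_n\mapsto\sum_i z_i^n$, i.e.\ the Hall--Littlewood polynomial in the \emph{finitely many} auxiliary variables $z_1,\ldots,z_l$ (with parameter $q^{-1}$, up to the grading twist of Theorem \ref{thm:Hall=Lambda}). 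This is now a statement purely about symmetric polynomials in $z_1,\ldots,z_l$: it asserts that the operation ``multiply by the kernel $\prod_{i<j}(z_i-q z_j)/(z_i-z_j)$ and read off the $z^\lambda$-coefficient'' inverts the symmetrization defining $P_\nu$, which is precisely Macdonald's formula \cite[Chap.I\!I\!I \S2 (2.15)]{M} in $l$ variables.

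The main obstacle is this last finite-variable identity. I expect the bookkeeping of the parameter (the $q^{-1}$ inside $\phi(P_\nu)$ against the $q$ appearing in $F$, together with the $q^{\abs{\cdot}}$ grading twist of Theorem \ref{thm:Hall=Lambda}) to be delicate, and one must separately treat the case $\ell(\nu)=l$, where the symmetrization formula for $P_\nu(z_1,\ldots,z_l)$ applies directly, and the case $\ell(\nu)\ne l$, where one checks that $\phi(P_\nu)$ contributes no monomial $z^\lambda$ with $\ell(\lambda)=l$ (in particular $\phi(P_\nu)=0$ once $\ell(\nu)>l$). Once the symmetrization formula is invoked, extracting the dominant coefficient is a routine Weyl-denominator computation.
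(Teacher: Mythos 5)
The paper gives no proof of this proposition at all --- it is quoted from Macdonald \cite[Chap.I\!I\!I \S2 (2.15)]{M} --- so there is no internal argument to compare against. Your reduction is a genuinely attractive use of the machinery the paper does develop: the duality expansion $f=\sum_\nu\pair{f}{P_\nu}Q_\nu$, the specialization $\phi(p_n)=z_1^n+\cdots+z_l^n$ applied to the Cauchy kernel of Proposition \ref{prp:Cauchy}, and the resulting identity $\pair{Q(z_1)*\cdots*Q(z_l)}{P_\nu}=\phi(P_\nu)$ are all correct. The problem is that these steps are reversible bookkeeping, and the entire content of the proposition is concentrated in the finite-variable identity you end with, $[z^\lambda]\bigl(\prod_{i<j}F(z_j/z_i)\,\phi(P_\nu)\bigr)=\delta_{\lambda,\nu}$, which you do not prove. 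Justifying it as ``precisely Macdonald's (2.15)'' is circular, since (2.15) \emph{is} the statement being proven; and it is not a ``routine Weyl-denominator computation'': for $\nu\neq\lambda$ one must show that the $z^\lambda$-coefficient of the kernel times the full symmetrization $\sum_{w\in S_l}w\bigl(z^\nu\prod_{i<j}(z_i-tz_j)/(z_i-z_j)\bigr)$ vanishes, a constant-term argument over all $l!$ terms, not just an extraction of the dominant one. As written, the proof has a hole exactly where the theorem lives.

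Moreover, the ``delicate parameter bookkeeping'' you defer is in fact fatal to the statement as printed. Take $l=2$, $\lambda=(1,1)$. With $F(z)=(1-qz)/(1-z)$ one gets $[z_1z_2]\,Q(z_1,z_2)=Q_{(1)}*Q_{(1)}+(1-q)Q_{(2)}$, whereas a direct computation in $\Hcl$ (using $[I_{(1)}]*[I_{(1)}]=[I_{(2)}]+(q+1)[I_{(1^2)}]$ from Lemma \ref{lem:calG:1} together with Lemma \ref{lem:I_lambda:pair}) gives $Q_{(1,1)}=Q_{(1)}*Q_{(1)}-(1-q^{-1})Q_{(2)}$. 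So the proposition is false with the paper's $F$; the kernel must be $\prod_{i<j}(1-z_j/z_i)/(1-q^{-1}z_j/z_i)$, i.e.\ Macdonald's $\prod_{i<j}(1-R_{ij})/(1-tR_{ij})$ at $t=q^{-1}$. Had you carried out the parameter check you yourself flag as delicate, your reduction would have terminated in a false identity; any completed proof must first correct $F$ and then actually supply the finite-variable orthogonality.
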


%%%%%%%%%%%%%%%%%%%%%%%%%%%%%%%%%%%%%%%%%%%%%%%%%%%%%%%%%%%%%%%%%%%%%%
\section{Derived classical Hall algebra}
\label{sec:dg}

We keep the notations in \S \ref{sec:Hcl} and \S \ref{sec:HL}.
As mentioned in the introduction,
we will realize the differential operators $\partial_{p_n}$,
or the Heisenberg algebra generated by $p_n$ and $\partial_{p_n}$,
in the derived Hall algebra of the category 
$\Rep^{\nil}_{\bbF_q} Q$.

%%%%%%%%%%%%%%%%%%%%%%%%%%%%%%%%%%%%%%%%%%%%%%%%%%%%%%%%%%%%%%%%%%%%%%
\subsection{Heisenberg relation}
\label{subsec:Heis}

Applying the formalism in \S \ref{subsec:dg} to the dg category of 
nilpotent representations of the Jordan quiver, 
we have the unital associative algebra $\Dcl$ generated by the set 
\[
 \{Z_\lambda^{[n]} \mid n \in \bbZ, \ \lambda: \tpar\}
\]
and the relations
\begin{align}
\label{eq:Toen:1}
 Z_\lambda^{[n]} * Z_\mu^{[n]} &= 
 \sum_{\nu: \tpar} g^{\nu}_{\lambda,\mu} Z_\lambda^{[n]}
\\
\label{eq:Toen:2}
 Z_\lambda^{[n]} * Z_\mu^{[n+1]} &= 
 \sum_{\alpha,\beta: \tpar} 
 \gamma^{\alpha,\beta}_{\lambda,\mu} 
 Z_\alpha^{[n+1]} * Z_\beta^{[n]},
\\
\nonumber
 Z_\lambda^{[n]} * Z_\mu^{[m]} &= Z_\mu^{[m]} * Z_\lambda^{[n]},
 \quad (n-m<-1).
\end{align}
Here 
$g^\nu_{\lambda,\mu}=
\abs{\{M \subset I_\nu \mid M \simeq I_{\mu}, I_\nu/M \simeq I_\lambda\}}$ 
as in \eqref{eq:Hall(A)} of \S \ref{subsec:Jordan},
and 
\[
 \gamma^{\alpha,\beta}_{\lambda,\mu} = 
 \frac{\abs{\{0 \to I_\alpha \to I_\lambda \to I_\beta \to I_\nu \to 0 
              \mid \text{exact in $\catA$}\}}}{a_\lambda a_\mu}.
\]
We call $\Dcl$ the derived classical Hall algebra.

Now recalling the primitive elements of $\Hcl$ in Definition \ref{dfn:p_n} 
and Theorem \ref{thm:prim}, we consider

\begin{dfn}
For $n \in \bbZ_{>0}$ we define $b_{\pm n} \in \Dcl$ by  
\[
 b_n := 
 \sum_{\abs{\lambda}=n}(q;q)_{\ell(\lambda)-1} Z_\lambda^{[0]},
 \quad
 b_{-n} := 
 \sum_{\abs{\lambda}=n}(q;q)_{\ell(\lambda)-1} Z_\lambda^{[1]}.
\]
We also set $b_0 := 1 \in \Dcl$.
($b$ reads ``boson".)
\end{dfn}

\begin{thm}[Heisenberg relation]\label{thm:Heis}
For $m,n \in \bbZ$ we have 
\[
 b_m * b_n - b_n * b_m = \delta_{m+n,0}\frac{m}{q^m-1}.
\]
Precisely speaking, for $m,n$ with $m n \neq 0$ 
we have the above formula,
and for $m,n$ with $m n = 0$ we have $b_m * b_n - b_n * b_m = 0$.
\end{thm}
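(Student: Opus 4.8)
The plan is to reduce the Heisenberg relation to the three defining relations \eqref{eq:Toen:1}, \eqref{eq:Toen:2} of $\Dcl$ by expanding $b_m * b_n - b_n * b_m$ in terms of the generators $Z_\lambda^{[n]}$. The key observation is the degree structure: each $b_n$ ($n>0$) lives in degree $0$ (built from $Z_\lambda^{[0]}$), while each $b_{-n}$ lives in degree $1$ (built from $Z_\lambda^{[1]}$). Thus there are essentially four cases for the commutator $[b_m, b_n]$ according to the signs of $m,n$, and I would organize the proof around these. The cases $m,n>0$ and $m,n<0$ involve products $Z^{[0]} * Z^{[0]}$ and $Z^{[1]} * Z^{[1]}$, each governed by the same-level relation \eqref{eq:Toen:1}, which reproduces the Ringel-Hall product; hence $b_m * b_n$ and $b_n * b_m$ both collapse to expressions built from $p_m * p_n$ in $\Hcl$. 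Since $\Hcl$ is commutative (Theorem \ref{thm:Hall-Jordan}(1)), these commutators vanish, disposing of the $mn>0$ diagonal blocks.

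The substantive case is $m>0$, $n<0$ (and its mirror), where I must compute $b_m * b_{-n}$ using the mixed relation \eqref{eq:Toen:2} to move $Z^{[0]}$ past $Z^{[1]}$. Writing $n' = -n > 0$, the commutator $b_m * b_{-n'} - b_{-n'} * b_m$ expands into a double sum over partitions $\lambda$ (with $|\lambda|=m$) and $\mu$ (with $|\mu|=n'$), weighted by $(q;q)_{\ell(\lambda)-1}(q;q)_{\ell(\mu)-1}$, of the difference $Z_\lambda^{[0]} * Z_\mu^{[1]} - Z_\mu^{[1]} * Z_\lambda^{[0]}$. Applying \eqref{eq:Toen:2} to the first term rewrites it via the structure constants $\gamma^{\alpha,\beta}_{\lambda,\mu}$ as a sum of $Z_\alpha^{[1]} * Z_\beta^{[0]}$, and the anti-normal-ordered term $Z_\mu^{[1]} * Z_\lambda^{[0]}$ is already in that form. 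The plan is to show that almost all terms cancel, leaving only a scalar multiple of $b_0 = 1$, which can survive only when the degrees balance, i.e.\ $m = n'$, forcing the $\delta_{m+n,0}$.

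The hard part will be the combinatorial identity controlling this cancellation: I must show that $\sum_{\lambda,\mu} (q;q)_{\ell(\lambda)-1}(q;q)_{\ell(\mu)-1}\,\gamma^{\alpha,\beta}_{\lambda,\mu}$, after subtracting the reverse-order contribution, vanishes for all $(\alpha,\beta)\ne(\emptyset,\emptyset)$ and equals $m/(q^m-1)$ when $(\alpha,\beta)=(\emptyset,\emptyset)$ and $m=n'$. To evaluate $\gamma^{\alpha,\beta}_{\lambda,\mu}$ I would unwind its definition as counting four-term exact sequences $0\to I_\alpha \to I_\lambda \to I_\beta \to I_\nu \to 0$, splitting it through the image $I_\rho$ into a pair of Hall numbers $g^\lambda_{\alpha,\rho}\,g^\mu_{\rho,\beta}$ summed over the intermediate partition $\rho$; this converts the problem into an identity purely inside $\Hcl$. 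The coefficient $m/(q^m-1)$ should then emerge from the value $\pair{p_m}{p_m} = m/(q^m-1)$ established in Theorem \ref{thm:p_n:pair}, since the extremal term $(\alpha,\beta)=(\emptyset,\emptyset)$ forces $I_\lambda \simeq I_\mu$ and the weight $\sum_{|\lambda|=m}(q;q)_{\ell(\lambda)-1}^2 a_\lambda^{-1}$ is exactly that pairing — so I expect the Green Hopf pairing to be the bridge that makes the scalar match. Identifying this pairing-theoretic interpretation of the residual term, and verifying the total vanishing away from it, is where the real work lies.
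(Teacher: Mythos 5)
Your plan coincides with the paper's proof in every structural respect: the same case split by the signs of $m,n$, the same use of the relation \eqref{eq:Toen:1} together with the commutativity of $\Hcl$ to kill the same-sign commutators, the same application of \eqref{eq:Toen:2} with the factorization of $\gamma^{\alpha,\beta}_{\lambda,\mu}$ through the image of the middle map into a product of Hall numbers summed over an intermediate partition, and the same identification of the surviving scalar $\sum_{\abs{\lambda}=m}(q;q)_{\ell(\lambda)-1}^2 a_\lambda^{-1}$ with $\pair{p_m}{p_m}=m/(q^m-1)$. The one step you defer as ``the real work'' --- the vanishing of the coefficient of $Z_\alpha^{[1]}*Z_\beta^{[0]}$ away from $(\alpha,\beta)=(\emptyset,\emptyset)$ --- is closed in the paper by recognizing that the needed identity, $\sum_{\lambda}(q;q)_{\ell(\lambda)-1}\,e^{\lambda}_{\mu,\nu}\,a_\lambda^{-1}=0$ for $\mu,\nu\neq\emptyset$ together with $e^{\lambda}_{\mu,\emptyset}=\delta_{\lambda,\mu}a_\lambda$, is precisely the primitivity of $p_n$ already established in Theorem \ref{thm:prim}, so no new combinatorics is required.
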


Note that the right hand side is equal to the Hopf pairing
$\pair{p_m}{p_n}$ in $\Hcl$ (Theorem \ref{thm:p_n:pair}).

\begin{proof}
By the relation \eqref{eq:Toen:1} and commutativity 
$g^{\lambda}_{\mu,\nu}=g^{\lambda}_{\nu,\mu}$ of $\Hcl$,
we have $b_m * b_n =b_n * b_m$ for $m n \ge 0$.
So let us assume $m,n \in \bbZ_{>0}$ and consider 
$b_m * b_{-n} - b_{-n} * b_m$.

Set $c_\lambda := (q;q)_{\ell(\lambda)-1}$.
Then %the primitivity 
$\Delta(p_l)=p_l \otimes 1 + 1 \otimes p_l$ 
for $p_l = \sum_{\abs{\lambda}=l} c_\lambda [I_\lambda] \in \Hcl$ 
is equivalent to 
\begin{align}\label{eq:prim}
 \begin{cases} 
 \sum_{\lambda: \tpar} 
 c_\lambda e^{\lambda}_{\mu,\nu} a_\lambda^{-1} = 0 
 & (\mu,\nu \neq \emptyset) \\
 e^{\lambda}_{\mu,\emptyset} = e^{\lambda}_{\emptyset,\mu}
                             = \delta_{\lambda,\mu} a_\lambda 
 \end{cases}.
\end{align}
Here we used 
$e^\lambda_{\mu,\nu}
=\abs{\{0 \to I_\nu \to I_\lambda \to I_\mu \to 0 \mid 
        \text{exact in $\catA$}\}}
=g^\lambda_{\mu,\nu} a_\mu a_\nu$.
Recall also that $e^\lambda_{\mu,\nu}=0$ 
if $\abs{\lambda} \neq \abs{\mu}+\abs{\nu}$ by Lemma \ref{lem:graded}.
Now we note that 
\begin{align}\label{eq:gamma}
 \gamma^{\alpha,\beta}_{\mu,\nu} = 
 \sum_{\nu: \tpar}
 \frac{e^{\mu}_{\lambda,\alpha}e^{\nu}_{\beta,\lambda}}
      {a_\lambda a_\mu a_\nu}.
\end{align}
Then by the relation \eqref{eq:Toen:2} we have
\begin{align*}
  a_m * a_{-n} 
&=\sum_{\abs{\mu}=m} \sum_{\abs{\nu}=n} 
  c_\mu c_\nu Z_\mu^{[0]} * Z_\nu^{[1]} 
=\sum_{\abs{\mu}=m} \sum_{\abs{\nu}=n} c_\mu c_\nu 
  \sum_{\substack{\alpha,\beta: \\ \tpar}}
  \gamma^{\alpha \beta}_{\mu, \nu} Z_\alpha^{[1]} * Z_\beta^{[0]}.
\end{align*}
Fix  partitions $\alpha$ and  $\beta$.
Then using \eqref{eq:gamma} we can compute 
the coefficient of $Z_\alpha^{[1]} Z_\beta^{[0]}$ as 
\begin{align*}
  \sum_{\abs{\mu}=m} \sum_{\abs{\nu}=n} 
  c_\mu c_\nu \gamma^{\alpha \beta}_{\mu, \nu} 
&=\sum_{\abs{\mu}=m} \sum_{\abs{\nu}=n} \sum_{\lambda: \tpar}
  c_\mu c_\nu 
  \frac{e^{\mu}_{\lambda,\alpha}e^{\nu}_{\beta,\lambda}}
       {a_\lambda a_\mu a_\nu}
%\\
%&
 =\sum_{\substack{\lambda: \tpar \\ \abs{\mu}=m}}
  c_\mu e^{\mu}_{\lambda,\alpha} a_\lambda^{-1} a_\mu^{-1}
  \sum_{\abs{\nu}=n} c_\nu e^\nu_{\beta,\lambda}a_\nu^{-1}.
\end{align*}
Then using \eqref{eq:prim} repeatedly we proceed as
\begin{align*}
&=\sum_{\substack{\lambda: \tpar \\ \abs{\mu}=m}} 
  c_\mu e^{\mu}_{\lambda,\alpha} a_\lambda^{-1} a_\mu^{-1}
  \bigl(\delta_{\beta,\emptyset} c_\lambda 
      + \delta_{\lambda, \emptyset}\delta_{\abs{\beta},n} c_\beta \bigr)
\\
&=\delta_{\beta,\emptyset} 
  \sum_{\lambda: \tpar} c_\lambda a_\lambda^{-1}
  \sum_{\abs{\mu}=m} c_\mu e^\mu_{\lambda,\alpha} a_\mu^{-1}
 +\delta_{\abs{\beta},n}c_\beta 
  \sum_{\abs{\mu}=m} c_\mu e^\mu_{\emptyset,\alpha} a_\mu^{-1}
\\
&=\delta_{\beta,\emptyset} 
  \sum_{\abs{\lambda}=m} c_\lambda a_\lambda^{-1}
  %\bigl(\delta_{\lambda,\emptyset}\delta_{\abs{\alpha},m}c_\alpha+
  \delta_{\alpha,\emptyset}\delta_{m,n}c_\lambda %\bigr)
 +\delta_{\abs{\alpha},m} \delta_{\abs{\beta},n}c_\alpha c_\beta
\\
&=%\delta_{\abs{\alpha},m} \delta_{\beta,\emptyset} c_\alpha+
  \delta_{\alpha,\emptyset} \delta_{\beta,\emptyset} \delta_{m,n}
  \sum_{\abs{\lambda}=m} c_\lambda^2 a_\lambda^{-1}
 +\delta_{\abs{\alpha},m} \delta_{\abs{\beta},n}c_\alpha c_\beta.
\end{align*}
Summing up on $\alpha$ and $\beta$, we have the desired result:
\[
 b_m * b_{-n} = \delta_{m,n} \pair{p_m}{p_m} + b_{-n} * b_m. 
\]
\end{proof}

Thus we realized the differential operator 
$\partial_{p_n}$ of $p_n \in \Hcl$ 
by the embedding 
\[
 \Hcl \longinj \Dcl, \quad 
 [I_\lambda] \longmapsto Z_\lambda^{[1]}, \quad 
 p_n \longmapsto b_{-n},
\]
and $\partial_{p_n} =  b_n/\pair{p_n}{p_n}$.

%%%%%%%%%%%%%%%%%%%%%%%%%%%%%%%%%%%%%%%%%%%%%%%%%%%%%%%%%%%%%%%%%%%%%%
\subsection{Hall-Littlewood element as eigenfunction}
\label{subsec:eigen}

We continue to use the embedding 
$\Hcl \inj \Dcl$, $[I_\lambda] \mapsto Z_\lambda^{[1]}$,
and write the image of elements in $\Hcl$ by the same letter.
In view of  Theorem \ref{thm:Heis},we also use the symbol 
\[
 \partial_{p_n} := n(q^n-1)^{-1} b_n \in \Dcl.
\]
Let us denote by $\wDcl$ the completion of $\Dcl$ 
with respect to the grading by $K_0(\catA)=\bbZ$.

\begin{prp}
Define $D(z) \in \wDcl [[z^{\pm1}]]$ by 
\begin{align*}
   D(z) 
:=&\exp \Bigl(-\sum_{n\ge 1}\frac{1}{n}(q^n-1)b_{-n}z^n\Bigr) * 
   \exp \Bigl(-\sum_{n\ge 1}\frac{1}{n}(q^n-1)b_n z^{-n}\Bigr)
\\
 =&\exp \Bigl(-\sum_{n\ge 1}\frac{1}{n}(q^n-1)p_n z^n\Bigr) * 
   \exp \Bigl(-\sum_{n\ge 1}\partial_{p_n} z^{-n} \Bigr)
\end{align*}
with  $\exp(x):=\sum_{n\ge0}(x*\cdots*x)/n!$.
Define $D_n \in \wDcl$ by the expansion 
$D(z) = \sum_{n\in\bbZ} D_n z^n$.
Then, regarding $D_0$ as an operator acting on $\Hcl$, 
we have %find that $P_\lambda$ is an eigenfunction on $D_0$.
 \[
 D_0\left(P_\lambda\right) = q^{\ell(\lambda)} P_{\lambda}.
\]
\end{prp}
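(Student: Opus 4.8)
The plan is to read $D(z)$ as a vertex operator on $\Hcl \cong \bbC[p_1,p_2,\dots]$ and to compute its zero mode $D_0$ through an exchange relation with the creation series $Q(z)=\sum_{n\ge0}Q_{(n)}z^n$. Since $Q_\lambda$ is a scalar multiple of $P_\lambda$ (Definition \ref{dfn:Q_lambda}) and $D_0$ is linear, it suffices to prove $D_0(Q_\lambda)=q^{\ell(\lambda)}Q_\lambda$, which is convenient because Proposition \ref{prp:Q_lambda} expresses $Q_\lambda$ in terms of $Q(z)$. I would first record two facts about the normal--ordered form of $D(z)$, whose left (creation) factor is multiplication by $\exp\!\big(-\sum_{n\ge1}\tfrac1n(q^n-1)p_n z^n\big)$ and whose right (annihilation) factor is $\exp\!\big(-\sum_{n\ge1}\partial_{p_n}z^{-n}\big)$: the annihilation factor fixes the unit, since each $\partial_{p_n}$ lowers degree, while by Corollary \ref{cor:Q_n} the creation factor applied to the unit is the formal inverse $Q(z)^{-1}$. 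Hence $D(z)\cdot 1=Q(z)^{-1}$.

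The key step is the exchange relation $D(z)*Q(w)=F(w/z)\,Q(w)*D(z)$. Because $\partial_{p_n}$ is the genuine derivative one has $[\partial_{p_n},p_m]=\delta_{m,n}$ (with $p_m$ acting by multiplication), so the commutator of the exponent of the annihilation factor with that of $Q(w)$ is the scalar $-\sum_{n\ge1}\tfrac1n(q^n-1)(w/z)^n$; the Baker--Campbell--Hausdorff identity with central commutator then yields the factor $\exp\!\big(-\sum_{n\ge1}\tfrac1n(q^n-1)(w/z)^n\big)$, which by $\sum_{n\ge1}\tfrac1n(q^n-1)\zeta^n=\log\tfrac{1-\zeta}{1-q\zeta}$ is exactly $F(w/z)$, while the creation factor of $D(z)$ commutes with $Q(w)$ as both are multiplications. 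Iterating past the $l=\ell(\lambda)$ factors in Proposition \ref{prp:Q_lambda} and using $D(z_0)\cdot1=Q(z_0)^{-1}$ gives
\[
 D(z_0)*Q_\lambda
 =\Big[z_1^{\lambda_1}\cdots z_l^{\lambda_l}\Big]
  \Big(\prod_{1\le i<j\le l}F(z_j/z_i)\prod_{i=1}^{l}F(z_i/z_0)\cdot
       Q(z_1)*\cdots*Q(z_l)*Q(z_0)^{-1}\Big).
\]

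It remains to take $D_0$ as the coefficient of $z_0^0$ and identify it with $q^{\ell(\lambda)}Q_\lambda$. The base cases $\ell(\lambda)\le1$ are immediate: writing $F(w/z_0)=1+(1-q)\sum_{k\ge1}(w/z_0)^k$ and $Q(z_0)^{-1}=\sum_k d_k z_0^k$, the $z_0^0 w^m$--coefficient of $F(w/z_0)Q(w)Q(z_0)^{-1}$ collapses through the identity $\sum_{k=0}^{m}Q_{(m-k)}d_k=\delta_{m,0}$ (coming from $Q(z_0)Q(z_0)^{-1}=1$), giving $D_0(Q_{(m)})=Q_{(m)}-(1-q)Q_{(m)}=qQ_{(m)}$. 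For general $\lambda$ I would run an induction on $l$, peeling one variable via $\prod_{i<j}F(z_j/z_i)=\prod_{j\ge2}F(z_j/z_1)\prod_{2\le i<j}F(z_j/z_i)$ and repeatedly invoking the same telescoping driven by $Q(z_0)^{-1}Q(z_0)=1$. Alternatively, the computation can be closed structurally: from the adjointness of multiplication by $p_n$ and of $b_n$ under Green's Hopf pairing one checks $D(z)^{*}=D(z^{-1})$, so that $D_0$ is self--adjoint; granting that $D_0$ preserves the $\bbZ$--grading and is triangular, with diagonal entry $q^{\ell(\lambda)}$, in the dominance order on the orthogonal basis $\{P_\lambda\}$, self--adjointness forces all off--diagonal coefficients to vanish, yielding $D_0(P_\lambda)=q^{\ell(\lambda)}P_\lambda$.

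\textbf{The main obstacle} is precisely this last identification of the eigenvalue. In the direct approach one must control, in the multivariable constant--term extraction, how the $l$ factors $F(z_i/z_0)$ couple $z_0$ to every $z_i$, and verify that the cancellations from $Q(z_0)^{-1}Q(z_0)=1$ contribute exactly one factor of $q$ per part rather than lower--order corrections; in the structural approach the work is shifted to proving the dominance--triangularity of $D_0$ together with the correct diagonal $q^{\ell(\lambda)}$. Either way, once the eigenvalue equation is established for $Q_\lambda$, transferring it to $P_\lambda=\pair{P_\lambda}{P_\lambda}Q_\lambda$ is immediate.
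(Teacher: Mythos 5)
Your strategy is the paper's strategy: the same normal-ordered factorization of $D(z)$, the same observation that the annihilation factor fixes the vacuum so that $D(z)(1)=R(z)=Q(z)^{-1}$, the same exchange relation $D(z)*Q(w)=F(w/z)\,Q(w)*D(z)$ obtained from the central commutator of $-\sum_n\partial_{p_n}z^{-n}$ with the exponent of $Q(w)$, and the same reduction to the $Q_\lambda$ via Proposition \ref{prp:Q_lambda} and Corollary \ref{cor:Q_n}. Your $\ell(\lambda)\le 1$ computation is the paper's residue calculation rewritten as a coefficient extraction, and it is correct.

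There is, however, a genuine gap exactly where you flag ``the main obstacle'': the constant-term extraction in $z$ for general $l=\ell(\lambda)$ is never performed, and this is where the eigenvalue $q^{\ell(\lambda)}$ actually comes from. The induction you sketch (peel off $Q(w_1)$ and telescope) does not close as stated: after commuting $D(z)$ past $Q(w_1)$ you need the action of the \emph{whole series} $D(z)$, weighted by $F(w_1/z)$, on the shorter product, so knowing only $D_0$ on shorter products is insufficient --- the $l$ factors $F(w_i/z)$ must be treated simultaneously. The paper does this by a partial-fraction decomposition in $z$ of
\[
 z^{-1}\prod_{i=1}^{l}F(w_i/z)=z^{-1}\prod_{i=1}^{l}\frac{z-qw_i}{z-w_i},
\]
expanded in the region $\abs{z}>\abs{w_i}$: the pole at $z=0$ has residue $q^{l}$ (each factor tends to $q$ as $z\to 0$), giving the term $q^{l}\,Q(w_1,\dots,w_l)$ since $R(0)=1$, while the pole at $z=w_i$ produces a term proportional to $Q(w_1,\dots,w_l)*R(w_i)$ in which $Q(w_i)*Q(w_i)^{-1}=1$ cancels and the $F$-factors recombine to give $(1-q)\,Q(w_1,\dots,\wh{w}_i,\dots,w_l)$, an expression free of $w_i$; since $\lambda_i\ge 1$ for every $i\le\ell(\lambda)$, these extra terms contribute nothing to the coefficient of $w^{\lambda}$ and only the eigenvalue $q^{l}$ survives. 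Your alternative route via self-adjointness of $D_0$ is circular as written: it presupposes triangularity of $D_0$ on $\{P_\lambda\}$ with diagonal entry $q^{\ell(\lambda)}$, and identifying that diagonal entry is precisely the computation above.
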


\begin{proof}
Divide $D(z)=R(z) * R^\perp(z)$ with 
\begin{align*}
R(z)&:=\exp\Bigl(-\sum_{n \ge 1}\frac{1}{n}(q^n-1)b_{-n}z^n \Bigr)
      =\exp\Bigl(-\sum_{n \ge 1}\frac{1}{n}(q^n-1)p_n   z^n \Bigr), 
\\
R^\perp(z)
    &:=\exp\Bigl(-\sum_{n \ge 1}\frac{1}{n}(q^n-1)b_n z^{-n} \Bigr) 
     =\exp(-\sum_{n \ge 1}\partial_{p_n}z^{-n}). 
\end{align*}
We also set
\[
 Q(z) :=\exp\Bigl(\sum_{n \ge 1}\frac{1}{n}(q^n-1)p_n z^n \Bigr)
       =R^{-1}(z).
\]
By Corollary \ref{cor:Q_n} we have
\begin{align}\label{eq:Q(z)}
 Q(z) = \sum_{n\ge0} Q_{(n)} z^n.
\end{align}
By the Baker-Campbell-Hausdorff formula 
we have the commutation relation 
\[
 R^\perp(z) * Q(w) = Q(w) * R^\perp(z) F(w/z),
\]
where $F(z) \in \bbC[[z]]$ denotes 
\[
 F(z) := \exp\Bigl(-\sum_{n \ge 1} \frac{1}{n}(q^n-1)z^n\Bigr)
       = \frac{1-q z}{1-z}.
\]
Thus we have
\begin{align}\label{eq:DQ}
  D(z) * Q(w) = Q(w) * D(z) F(z) 
= Q(w) * R(z) * R^\perp(z) \frac{1-q w/z}{1-w/z}.
\end{align}

Now we show the statement for $\ell(\lambda)=1$,
i.e., the case $\lambda=(n)$.
Let us denote by $\Res_{z=0}\bigl(f(z)\bigr)$ 
the coefficient of $z^{-1}$ in the Laurent series $f(z)$.
Then from \eqref{eq:DQ} we have 
\begin{align*}
   D_0\left(Q(w)\right)
&= \Res_{z=0}\Bigl(z^{-1} Q(w) * R(z) \cdot \frac{1-q w/z}{1-w/z} \Bigr)
\\
&= Q(w) * R(0) \cdot q +w^{-1} Q(w) * R(w) \cdot (w-q w)
 = q Q(w)+(1-q).
\end{align*}
Thus from \eqref{eq:Q(z)} we have $D_0(Q_{(n)}) = q Q_{(n)}$.

Next we consider the case $\lambda=(\lambda_1,\ldots,\lambda_l)$ 
with $\ell(\lambda)=l>1$. 
Recall Proposition \ref{prp:Q_lambda} 
which states that $Q_\lambda$ is the coefficient of 
$w^\lambda = w_1^{\lambda_1} \cdots w_l^{\lambda_l}$ 
in
\[
 Q(w_1,\ldots,w_l) := Q(w_1) * \cdots * Q(w_l) \cdot
 \prod_{i<j}F(w_j/w_i).
\]
Then using \eqref{eq:DQ} we compute
\begin{align*}
  D(z) * Q(w_1,\ldots,w_l)
= Q(w_1,\ldots,w_l) * D(z) * \prod_{i=1}^l F(w_i/z). 
\end{align*}
Thus
\begin{align*}
  D_0\left(Q(w_1,\ldots,w_l)\right)
&=\Res_{z=0}\Bigl(z^{-1}Q(w_1,\ldots,w_l) * R(z) \cdot 
                  \prod_{i=1}^l \frac{1-q w_i/z}{1-w_i/z} \Bigr)
\\
&=Q(w_1,\ldots,w_l) * R(0) \cdot q^l + 
  \sum_{i=1}^l w_i^{-1} Q(w_1,\ldots,w_l) * R(w_i) 
  \cdot (w_i-q w_i) \cdot \prod_{j \neq i}F(w_i/w_j)
\\
&=q^l Q(w_1,\ldots,w_l)  + 
  (1-q) \sum_{i=1}^l  Q(w_1,\ldots,\wh{w}_i,\ldots,w_l).
\end{align*}
Taking the coefficients of $w^\lambda$,
we have none from the summation in the right hand side
and obtain the statement.
\end{proof}

%%%%%%%%%%%%%%%%%%%%%%%%%%%%%%%%%%%%%%%%%%%%%%%%%%%%%%%%%%%%%%%%%%%%%%
\subsection{Jing's vertex operator}

We close this note by recalling Jing's vertex operator \cite{J}
whose composition reconstructs Hall-Littlewood polynomials.

\begin{prp}
Define $B(z) \in \wDcl [[z^{\pm1}]]$ by 
\begin{align*}
   B(z) 
:=&\exp \Bigl( \sum_{n\ge 1}\frac{1}{n}(q^n-1)b_{-n}z^n\Bigr) * 
   \exp \Bigl(-\sum_{n\ge 1}\frac{1}{n}(q^n-1)b_n z^{-n}\Bigr)
\\
 =&\exp \Bigl( \sum_{n\ge 1}\frac{1}{n}(q^n-1)p_n z^n\Bigr) * 
   \exp \Bigl(-\sum_{n\ge 1}\partial_{p_n} z^{-n} \Bigr)
\end{align*}
and expand it as $B(z) = \sum_{n\in\bbZ} B_n z^n$.
Then, regarding $B_n$ as an operator acting on $\Hcl$, 
for any partition $\lambda=(\lambda_1,\ldots,\lambda_l)$ 
of length $\ell(\lambda)=l$ we have
\[
 Q_\lambda = \left(B_{\lambda_1} * \cdots * B_{\lambda_l}\right)(1).
\]
\end{prp}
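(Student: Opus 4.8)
The plan is to split Jing's operator into its two natural halves,
$B(z) = Q(z) * R^\perp(z)$, where
$Q(z) = \exp\bigl(\sum_{n\ge1}\frac1n(q^n-1)p_n z^n\bigr)$ is the creation
part (the first factor) and
$R^\perp(z) = \exp\bigl(-\sum_{n\ge1}\partial_{p_n}z^{-n}\bigr)$ is the
annihilation part (the second factor) --- exactly the two operators already
introduced in the proof of the preceding proposition on $D(z)$. Two facts carry
over at once. First, by Corollary \ref{cor:Q_n} the creation part expands as
$Q(z) = \sum_{n\ge0}Q_{(n)}z^n$ and acts on $\Hcl$ by left multiplication, so
$Q(z)(1) = Q(z)$. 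Second, the annihilation part fixes the vacuum,
$R^\perp(z)(1) = 1$, because each $\partial_{p_n}$ lowers the
$K_0(\catA)=\bbZ$-grading by $n$ and hence kills the degree-zero element $1$.

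The key input is the commutation relation
$R^\perp(z) * Q(w) = Q(w) * R^\perp(z)\,F(w/z)$ with $F(z)=(1-qz)/(1-z)$,
established in the $D(z)$-proof via the Baker--Campbell--Hausdorff formula (the
relevant commutator is central, a scalar multiple of the Heisenberg pairing, so
BCH truncates). I would then reorder
\[
 B(z_1)*\cdots*B(z_l)
 = Q(z_1)*R^\perp(z_1)*\cdots*Q(z_l)*R^\perp(z_l)
\]
by sweeping every annihilation factor $R^\perp(z_i)$ to the far right. Each time
$R^\perp(z_i)$ passes a creation factor $Q(z_j)$ with $i<j$ it contributes a
scalar $F(z_j/z_i)$, while creation factors commute among themselves (by
commutativity of $\Hcl$) and annihilation factors commute among themselves (by
the $m,n>0$ case of the Heisenberg relation); no pair with $i>j$ ever crosses.
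The outcome is
\[
 B(z_1)*\cdots*B(z_l)
 = \Bigl(\prod_{i<j}F(z_j/z_i)\Bigr)\,
   Q(z_1)*\cdots*Q(z_l)*R^\perp(z_1)*\cdots*R^\perp(z_l).
\]

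Applying this to the vacuum and using $R^\perp(z_i)(1)=1$ collapses the
annihilation tail, leaving
\[
 \bigl(B(z_1)*\cdots*B(z_l)\bigr)(1)
 = Q(z_1)*\cdots*Q(z_l)\cdot\prod_{i<j}F(z_j/z_i)
 = Q(z_1,\ldots,z_l),
\]
which is precisely the generating series of Proposition \ref{prp:Q_lambda}.
Extracting the coefficient of $z^\lambda = z_1^{\lambda_1}\cdots z_l^{\lambda_l}$
on both sides --- the left giving $(B_{\lambda_1}*\cdots*B_{\lambda_l})(1)$ from
the expansion $B(z)=\sum_n B_n z^n$, the right giving $Q_\lambda$ by
Proposition \ref{prp:Q_lambda} --- yields the claim.

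The main obstacle I anticipate is the bookkeeping in the reordering step: one
must verify that exactly the crossings with $i<j$ occur and that each produces
$F(z_j/z_i)$ in the same expansion direction (positive powers of $z_j/z_i$) used
in Proposition \ref{prp:Q_lambda}, so that the scalar prefactors coincide on the
nose rather than up to reindexing. A secondary point to justify carefully is
that all manipulations are legitimate in $\wDcl[[z^{\pm1}]]$ and that coefficient
extraction commutes with the operator action on $\Hcl$; these are formal but rely
on the centrality of the commutator, hence the truncation of BCH, to rule out
spurious higher-order terms.
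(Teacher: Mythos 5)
Your proof is correct and is essentially the standard normal-ordering argument of Jing \cite{J} and Macdonald that the paper itself cites in place of giving a proof: the decomposition $B(z)=Q(z)*R^\perp(z)$, the commutation relation $R^\perp(z)*Q(w)=Q(w)*R^\perp(z)F(w/z)$ already established in the $D(z)$ proof, the vacuum property $R^\perp(z)(1)=1$, and the generating-function characterization of $Q_\lambda$ in Proposition \ref{prp:Q_lambda} are exactly the right ingredients, and your bookkeeping of the crossings $i<j$ and of the expansion direction of $F(z_j/z_i)$ is accurate.
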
 

The proof is the same as in \cite{J} and  
\cite[Chap.I\!I\!I \S5 Exercise 8]{M}
so we omit it.

%%%%%%%%%%%%%%%%%%%%%%%%%%%%%%%%%%%%%%%%%%%%%%%%%%%%%%%%%%%%%%%%%%%%%%
%%%%%%%%%%%%%%%%%%%%%%%%%%%%%%%%%%%%%%%%%%%%%%%%%%%%%%%%%%%%%%%%%%%%%%

\end{document}